\theoremstyle{plain}   
\newtheorem{theorem}{Theorem}
\newtheorem{lemma}[theorem]{Lemma}
\newtheorem{corollary}[theorem]{Corollary}
\newtheorem{proposition}[theorem]{Proposition}
\newtheorem*{theorem*}{Theorem}
\newtheorem*{conjecture*}{Conjecture}
\newtheorem*{corollary*}{Corollary}
\theoremstyle{definition}
\newtheorem{remark}[theorem]{Remark}
\newtheorem{definition}[theorem]{Definition}
\newsavebox{\@brx}
\newcommand{\llangle}[1][]{\savebox{\@brx}{\(\m@th{#1\langle}\)}%
  \mathopen{\copy\@brx\mkern2mu\kern-0.9\wd\@brx\usebox{\@brx}}}
\newcommand{\rrangle}[1][]{\savebox{\@brx}{\(\m@th{#1\rangle}\)}%
  \mathclose{\copy\@brx\mkern2mu\kern-0.9\wd\@brx\usebox{\@brx}}}
\newcommand{\breakingcomma}{%
  \begingroup\lccode`~=`,
  \lowercase{\endgroup\expandafter\def\expandafter~\expandafter{~\penalty0 }}}
\newcolumntype{L}{>{$}l<{$}}
\newcommand{\CC}{{\mathbb{C}}}
\newcommand{\PP}{{\mathbb{P}}}
\newcommand{\QQ}{{\mathbb{Q}}}
\newcommand{\ZZ}{{\mathbb{Z}}}
\newcommand{\SL}{{\mathrm{SL}}}
\newcommand{\epi}{{\bf e}}
\newcommand{\F}[2]{{F_{#1}^{#2}}}
\newcommand{\Jac}{\mathrm{Jac}}
\newcommand{\Fix}{\mathrm{Fix}}
\newcommand{\Aut}{\mathrm{Aut}}
\newcommand{\id}{\mathrm{id}}
\newcommand{\pr}{{\mathrm{pr}}}
\newcommand{\bx}{{\bf x}}
\newcommand{\by}{{\bf y}}
\newcommand{\bt}{{\bf t}}
\newcommand{\bv}{{\bf v}}
\newcommand{\bs}{{\bf s}}
\newcommand{\bz}{{\bf z}}
\newcommand{\fAp}{{ f_{A^\prime} }}
\newcommand{\wdvv}{ {\mathrm{WDVV}} }
\def\A{{\mathcal A}}
\def\C{{\mathcal C}}
\def\F{{\mathcal F}}
\def\G{{\mathcal G}}
\def\O{{\mathcal O}}
\def\T{{\mathcal T}}
\def\X{{\mathcal X}}
\def\MF{{\mathrm{MF}}}
\def\Cl{{\mathrm{Cl}}}
\def\p{\partial }
\def\ns{{\nabla}\hspace{-1.4mm}\raisebox{0.3mm}{\text{\footnotesize{\bf /}}}}
\newcommand{\rmH}{{{\rm H}}}
\newcommand{\LL}{{\Upsilon}}
\newcommand{\ccHH}{{\mathsf{HH}}}
\begin{document}
\title{Mirror symmetry for a cusp polynomial Landau-Ginzburg orbifold}
\date{\today}
\author{Alexey Basalaev}
\address{A. Basalaev:\newline Faculty of Mathematics, National Research University Higher School of Economics, Usacheva str., 6, 119048 Moscow, Russian Federation, and \newline
Skolkovo Institute of Science and Technology, Nobelya str., 3, 121205 Moscow, Russian Federation}
\email{a.basalaev@skoltech.ru}
\author{Atsushi Takahashi}
\address{Department of Mathematics, Graduate School of Science, Osaka University, 
Toyonaka Osaka, 560-0043, Japan}
\email{takahashi@math.sci.osaka-u.ac.jp}

\begin{abstract}
For any triple of positive integers $A' = (a_1',a_2',a_3')$ and $c \in \CC^*$, cusp polynomial $\fAp = x_1^{a_1'}+x_2^{a_2'}+x_3^{a_3'}-c^{-1}x_1x_2x_3$ is known to be mirror to Geigle--Lenzing orbifold projective line $\PP^1_{a_1',a_2',a_3'}$. 
More precisely, with a suitable choice of a primitive form, Frobenius manifold of a cusp polynomial $\fAp$, turns out to be isomorphic to the Frobenius manifold of the Gromov--Witten theory of $\PP^1_{a_1',a_2',a_3'}$. 

In this paper we extend this mirror phenomenon to the equivariant case. Namely, for any $G$ --- a symmetry group of a cusp polynomial $\fAp$, we introduce the \textit{Frobenius manifold of a pair} $(\fAp,G)$ and show that it is isomorphic to the Frobenius manifold of the Gromov--Witten theory of Geigle--Lenzing weighted projective line $\PP^1_{A,\Lambda}$, indexed by another set $A$ and $\Lambda$, distinct points on $\CC\setminus\{0,1\}$.

For some special values of $A'$ with the special choice of $G$ it happens that $\PP^1_{A'} \cong \PP^1_{A,\Lambda}$. 
Combining our mirror symmetry isomorphism for the pair $(A,\Lambda)$,
together with the ``usual'' one for $A’$, we get certain identities of the coefficients
of the Frobenius potentials.
We show that these identities are equivalent to the identities between the Jacobi theta constants and Dedekind eta--function.
\end{abstract}
\maketitle

\setcounter{tocdepth}{1}
\tableofcontents

\section{Introduction}
Mirror symmetry conjectures the certain equivalence between hypersurface singularities and algebraic varieties. 
In the language of complex geometry, mirror symmetry conjectures an existence of an isomorphism between Frobenius manifolds, that are associated to both a holomorphic function, defining a singularity and an algebraic variety. Frobenius manifold of an algebraic variety $\X$ is given by Gromov-Witten theory of it, we denote it by $M_\X$ in what follows. 
Let $f = f(\bx)$ be a polynomial with only isolated critical points. Its Frobenius manifold is constructed by an unfolding of $f(\bx)$ after a special choice of a volume form $\zeta$, called K.Saito primitive form (cf. \cite{sa:1,st:2}). We will denote this Frobenius manifold by $M^{\zeta}_f$.

For any given triple of positive integers $A' = (a_1',a_2',a_3')$ and any fixed $c \in \CC^\ast$ consider the polynomial 
\[
    \fAp = \fAp(\bx) = x_1^{a_1'} + x_2^{a'_2} + x_3^{a'_3} - c^{-1}x_1x_2x_3.
\]
We call it \textit{cusp polynomial}.

It has an isolated critical point $x_1=x_2=x_3 = 0$. In particular, if $A'$ is such that $1/a_1' + 1/a_2' + 1/a_3' = 1$, the polynomial $\fAp$ defines a simple--elliptic singularity. Mirror to $\fAp$ is the so-called \textit{Geigle-Lenzing} orbifold projective line $\PP^1_{A'}$, that is the one-dimensional orbifold having at most three isotropic points of orders $a_1',a_2',a_3'$.

It was proved in \cite{st:3,mr,ms} that for $\fAp$ defining a simple--elliptic singularity there is is a choice of a primitive form $\zeta = \zeta^\infty$, so that the Frobenius manifold $M_{\fAp}^{\zeta^\infty}$ is isomorphic to the Frobenius manifold $M_{\PP^1_{A'}}$. Both Frobenius manifolds are of rank $\mu_{A'} = 2 + \sum_{i=1}^3 (a_i'-1)$. For the other choices of $A'$ the mirror symmetry conjecture was proved in \cite{st:1,ist:1}.

\subsection*{Equivariant approach}
This was an idea of the physicists that in mirror symmetry a polynomial with isolated critical points should always be assumed together with some symmetry group. 
From this point of view all the results mentioned above should be understood as being obtained for the trivial symmetry group $G = \{\id\}$.

For every fixed $A'$ consider $\fAp$ together with $G \subseteq G_\fAp$, where
\[
    G_\fAp := \left\lbrace g = \mathrm{diag}(g_1,g_2,g_3) \in \mathrm{GL}(3,\CC) \ | \ \fAp(\bx) = \fAp (g \cdot \bx)\right\rbrace. 
\]
The pair $(\fAp,G)$ is then called \textit{Landau-Ginzburg orbifold} (cf. \cite{IV}). In the equivariant approach, mirror symmetry conjectures that the pair $(\fAp,G)$ is mirror to some algebraic variety $\X$, depending effectively on both $\fAp$ and $G$. 

It was proposed by \cite{et:1} that mirror to $(\fAp,G)$ should be the orbifold $\PP^1_{A,\Lambda}$ again, but for \textit{the other set} $A = (a_1,\dots,a_r)$ defined beneath and $\Lambda$ --- the set of distinct points on $\CC\setminus\{0,1\}$. For $i=1,2,3$ let $K_i$ stand for the maximal subgroup of $G$, fixing $i$--th coordinate $x_i$. Define
\[
    A := \left( \frac{a_1'}{|G/K_1|} \ast |K_1| ,\frac{a_2'}{|G/K_2|} \ast |K_2|,\frac{a_3'}{|G/K_3|} \ast |K_3|\right),
\]
where $b \ast |K_i|$ means that $b$ is repeated $|K_i|$ times in the set. We also omit all appearences of $1$ in the set $A$.

Gromov--Witten theory of an orbifold $\PP^1_{A,\Lambda}$ is well--defined. Its Frobenius manifold $M_{\PP^1_{A,\Lambda}}$ was studied in \cite{shi:1}, it is of the rank $\mu_A = 2 + \sum_{i=1}^r (a_i-1)$. However there is no definition of a Frobenius manifold of a Landau--Ginzburg orbifold $(\fAp,G)$ and no ``equivariant'' version of a primitive form of K.Saito.


\subsection*{In this paper}
We address the problem of construction of a Frobenius manifold of a Landau--Ginzburg orbifold $(\fAp,G)$ with a primitive form $\zeta=\zeta^\infty$, such that mirror symmetry conjecture holds true.

The first step that needs to be completed on this way is the construction of \textit{orbifold version} of Jacobian algebra of $(\fAp,G)$. There are several way to consider it (cf. \cite{K,BTW16,BTW17,S20}). In this paper for such an algebra we consider Hochschild cohomology ring $\ccHH^*(\mathrm{MF}_G(\fAp))$ of the category of $G$--equivariant matrix factorizations of $\fAp$. We compute it explicilty employing the technique of \cite{S20}.

Next we define the Frobenius manifold $M_{(\fAp,G)}^{\zeta^\infty}$ of a Landau--Ginzburg orbifold $(\fAp,G)$ axiomatically. It also depends on the primitive form 
$\zeta^\infty$ of $\fAp$. In particular, we have for a trivial group $G$ that $M_{(\fAp,\{\id\})}^{\zeta^\infty} \cong M_{\fAp}^{\zeta^\infty}$. 
Using this system of axioms we show that the following mirror theorem holds.

\begin{theorem*}[Theorem~\ref{theorem: main} in the text]
  Suppose $M_{(f_{A'},G)}^{\zeta^\infty}$ satisfies all axioms of Frobenius manifold of the pair $(f_{A'},G)$.  
  Then there is Frobenius manifolds isomorphism:
  \begin{equation}
    M_{\PP^1_{A,\Lambda}} \cong M_{(f_{A'},G)}^{\zeta^\infty}.
  \end{equation}
  In particular the potential $\F_{(\fAp,G)}^\infty$ of $M_{(f_{A'},G)}^{\zeta^\infty}$ coincides with the genus $0$ potential of orbifold Gromov--Witten theory of $\PP^1_{A,\Lambda}$ after the certain choice of coordinates.
\end{theorem*}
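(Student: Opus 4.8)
The plan is to prove the isomorphism by matching the two Frobenius manifolds on the level of their defining data: the Frobenius algebra at the origin (the flat unit, the pairing, and the small quantum product at the large-radius/tangent point) together with the Euler vector field and the flat structure, and then invoking a reconstruction/uniqueness result for Frobenius manifolds of the relevant type. Concretely, I would first establish a graded vector-space isomorphism $\ccHH^*(\mathrm{MF}_G(\fAp)) \cong H^*_{\mathrm{orb}}(\PP^1_{A,\Lambda})$ that intertwines the grading coming from the age/charge on the orbifold-cohomology side with the degree on the Hochschild side; the explicit computation of $\ccHH^*(\mathrm{MF}_G(\fAp))$ via the technique of \cite{S20}, assumed available from the earlier part of the paper, gives a basis indexed exactly by the sectors attached to the subgroups $K_i$ and the untwisted part, and the combinatorics of the multiset $A=(a_1,\dots,a_r)$ is engineered precisely so that the dimensions match: both manifolds have rank $\mu_A = 2 + \sum_{i=1}^r(a_i-1)$.

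Next I would match the rigid data that pins down the Frobenius structure. On the orbifold side, $M_{\PP^1_{A,\Lambda}}$ carries a residue/Poincar\'e pairing, an Euler field with spectrum determined by the charges, and a distinguished set of flat coordinates (the orbifold-cohomology classes), while $M_{(\fAp,G)}^{\zeta^\infty}$ is defined axiomatically in this paper, so the axioms should themselves stipulate the pairing, the unit, the homogeneity under the Euler field, and the initial conditions coming from the primitive form $\zeta^\infty$. I would verify that under the vector-space isomorphism above these structures correspond: the unit maps to the identity sector, the pairings agree up to the explicit residue normalization carried by $\zeta^\infty$, and the Euler vector fields have the same eigenvalues because the age-grading on $\ccHH^*$ matches the charge-grading on $H^*_{\mathrm{orb}}$. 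The role of the points $\Lambda \subset \CC\setminus\{0,1\}$ is to encode the residual moduli of the equivariant theory; I would identify them with the specific values appearing in the three-point genus-zero orbifold Gromov--Witten invariants (equivalently the cross-ratios fixed by $c$ and $G$), so that the small quantum products coincide at the base point.

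The decisive step is then a reconstruction argument: the Frobenius manifolds in question are of the type treated in \cite{shi:1,st:3} where the genus-zero potential is uniquely determined by the Frobenius algebra structure at the origin together with the Euler field and WDVV. I would argue that the axioms imposed on $M_{(\fAp,G)}^{\zeta^\infty}$ are exactly strong enough to force $\F_{(\fAp,G)}^\infty$ to satisfy the same WDVV equations, initial conditions, and quasi-homogeneity as the Gromov--Witten potential of $\PP^1_{A,\Lambda}$; since both solutions share the same initial data and the same grading constraints, uniqueness of the solution to WDVV (given the semisimplicity away from the origin and the fixed eigenvalues of the Euler field) yields the equality of potentials after a change of flat coordinates, and hence the claimed isomorphism of Frobenius manifolds.

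I expect the main obstacle to lie in the matching of the small quantum product, rather than in the dimension count or the pairing. On the Gromov--Witten side the structure constants involve nontrivial three-point orbifold invariants and the dependence on $\Lambda$, whereas on the Landau--Ginzburg side they come from the Hochschild cup product on $\ccHH^*(\mathrm{MF}_G(\fAp))$ twisted by the primitive form; reconciling these requires showing that the equivariant residue/cup product computed through \cite{S20} produces exactly the same cubic coefficients, and in particular that the cross-ratio data hidden in $c$ and in the subgroup structure of $G$ reproduces the positions $\Lambda$. This is where the identities between Jacobi theta constants and the Dedekind eta-function advertised in the abstract enter, and it is the step I would devote the most care to; once the cubic terms agree, the higher-order terms are pinned down by WDVV and the reconstruction theorem, so no further genus-zero computation is needed.
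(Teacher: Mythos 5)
Your overall strategy---verify that the axiomatically defined potential satisfies the hypotheses of Shiraishi's uniqueness theorem from \cite{shi:1} and conclude by uniqueness, since the Gromov--Witten potential of $\PP^1_{A,\Lambda}$ satisfies the same hypotheses---is indeed the paper's strategy. But your execution plan has two genuine gaps. First, you misstate what the reconstruction theorem requires: it is not ``the Frobenius algebra at the origin together with the Euler field and WDVV.'' Shiraishi's theorem additionally demands the splitting condition (iv), that $\F|_{t_1=e^{t_{\mu_A}}=0}$ decomposes as a sum $\sum_i \G^{(i)}$ with each $\G^{(i)}$ depending only on the variables $t_{i,j}$ of a single orbifold point, and the normalization (vi) that $\left(\prod_i t_{i,1}\right)e^{t_{\mu_A}}$ has coefficient exactly $1$. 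Establishing these from the axioms is the bulk of the paper's proof: condition (iv) requires an inductive WDVV argument (separate cases depending on whether some $a_{i_0}>2$, or $A=(2,\dots,2)$ with $G\subseteq K_l$ or $G=G^D$), and condition (vi) is obtained by combining the invariant sector axiom with the \emph{already known} trivial-group mirror theorem, via the observation that the relevant coefficient counts covers $\PP^1_{A,\Lambda}\to\PP^1_{A'}=[\PP^1_{A,\Lambda}/G]$ and hence equals $1/|G|$ on the invariant sector. Your claim that ``once the cubic terms agree, the higher-order terms are pinned down by WDVV\dots so no further genus-zero computation is needed'' skips precisely this work, and without it the uniqueness theorem cannot be invoked.

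Second, two of your planned steps are misdirected. You propose to match the small quantum products directly, identifying $\Lambda$ with cross-ratios hidden in $c$ and $G$, and you expect the theta/eta identities to enter at that point. Neither is how (or where) the argument works: the uniqueness theorem only requires the \emph{classical} limit algebra at $t_1=t_{i,j}=e^{t_{\mu_A}}=0$ (condition (v), supplied by the extension $\overline{\A^*}(\fAp,G)$ of the Hochschild ring) plus the single coefficient in (vi); no $q$-dependent three-point invariants need to be matched, and the $\Lambda$-dependence is absorbed entirely by the fact that the GW potential of $\PP^1_{A,\Lambda}$ satisfies the hypotheses for every $\Lambda$ while the solution is unique. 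The identities between Jacobi theta constants and the Dedekind eta-function are \emph{consequences} of the theorem (obtained afterwards by combining the equivariant and trivial-group mirror isomorphisms in the simple-elliptic cases), not ingredients of its proof; building them into the proof inverts the logical order of the paper.
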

\begin{corollary*}
    Frobenius manifold $M_{(f_{A'},G)}^{\zeta^\infty}$ is uniquely determined by its axioms.
\end{corollary*}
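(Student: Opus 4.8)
The plan is to derive the Corollary from Theorem~\ref{theorem: main} together with the reconstruction mechanism that underlies its proof. First I would record that the axioms defining $M_{(\fAp,G)}^{\zeta^\infty}$ prescribe a \emph{fixed} package of finite initial data: the flat metric, the flat unit, the Euler vector field $E$ (equivalently, the grading and spectrum), and the structure constants of the product at the origin, which the axioms identify with the ring $\ccHH^*(\MF_G(\fAp))$ computed earlier in the paper. None of these are free parameters once $(\fAp,G)$ is fixed, so the only a priori freedom lies in the higher-order coefficients of the potential $\F_{(\fAp,G)}^\infty$.

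Next I would invoke Theorem~\ref{theorem: main}, which exhibits an isomorphism $M_{\PP^1_{A,\Lambda}}\cong M_{(\fAp,G)}^{\zeta^\infty}$. Since $M_{\PP^1_{A,\Lambda}}$ is a single, unambiguously defined object (the genus-$0$ Gromov--Witten Frobenius manifold studied in \cite{shi:1}), any model satisfying the axioms is forced to be isomorphic to it. In particular, if two models both satisfy the axioms, then each is isomorphic to $M_{\PP^1_{A,\Lambda}}$, and hence to the other, which already yields uniqueness up to isomorphism.

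To upgrade this to uniqueness of the potential in the distinguished flat coordinates, I would argue that the isomorphism of Theorem~\ref{theorem: main} respects the fixed initial data above, so it matches the flat chart, the pairing, and the Euler field. The engine is a reconstruction statement: the $\wdvv$ equations, combined with the quasi-homogeneity imposed by $E$ and the prescribed classical (cubic) part of $\F_{(\fAp,G)}^\infty$, determine every higher coefficient recursively. For the directions of positive Euler weight this is a degree-by-degree induction in which each coefficient is pinned down uniquely. As the Gromov--Witten potential of $\PP^1_{A,\Lambda}$ is one solution of this recursion and the recursion admits a unique solution, the potential is forced, which is the content of the Corollary.

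I expect the genuine obstacle to be the \emph{marginal} (Euler-weight-zero) directions, corresponding to the moduli parameters $\Lambda\subset\CC\setminus\{0,1\}$. Along these directions $\wdvv$ is not solved by a finite induction but becomes a system of differential equations in the marginal coordinates, so one must verify that the initial conditions supplied by the axioms single out a unique solution rather than a family. This is precisely where the explicit description of $\ccHH^*(\MF_G(\fAp))$ and the normalization built into the axioms must do their work, and is consistent with the transcendental nature of the answer flagged in the abstract; checking well-posedness of this initial-value problem is the step I would treat most carefully.
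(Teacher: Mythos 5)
Your proposal is correct and takes essentially the same route as the paper: the corollary is an immediate consequence of Theorem~\ref{theorem: main}, since any Frobenius manifold satisfying the axioms has its potential identified (in the distinguished flat coordinates) with the fixed Gromov--Witten potential of $\PP^1_{A,\Lambda}$, so any two models are isomorphic to each other. Your "reconstruction engine" paragraph, including the care needed in the marginal direction, is exactly the content of Shiraishi's uniqueness theorem (Theorem~\ref{thm:satisfies ISTr}) on which the paper's proof of Theorem~\ref{theorem: main} rests, so it is the same machinery rather than an independent argument.
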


There is a related work on a construction of Frobenius manifold associated to the pair $(f,G)$ of an invertible polynomial $f$ (with a marginal deformation) and a subgroup $G$ of $\SL_N(\CC)\cap G_f$ (see \cite{Tu}).

However up to our knowledge this is the first mirror symmetry theorem for the Frobenius manifolds of Landau-Ginzburg orbifolds. In Section~\ref{section: examples} we provide plenty of examples of potentials $\F_{(\fAp,G)}^\infty$. It is important to note that we provide the mirror isomorphism of the theorem above explicilty.

\subsection*{\bf Simple--elliptic singularities and identities in the ring of quasimodular forms}
The definition of the set $A$ above assures that for any group $G$ whenever $A'$ is such that $\frac{1}{a_1'}+\frac{1}{a_2'}+\frac{1}{a_3'} = 1$, the set $A$ is either of length $3$ satisfying $\frac{1}{a_1}+\frac{1}{a_2}+\frac{1}{a_3} = 1$, or $A = (2,2,2,2)$. The respective Geigle-Lenzing orbifold projective lines are called \textit{elliptic orbifolds}, these are $\PP^1_{3,3,3}$, $\PP^1_{4,4,2}$, $\PP^1_{6,3,2}$ and $\PP^1_{2,2,2,2, \Lambda}$, $\Lambda\in \CC\setminus\{0,1\}$. As we have mentioned above, the first three orbifolds are known to be mirror to $(\fAp,\{\id\})$, latter being assumed together with the primitive form $\zeta^\infty$ (see also \cite{B16,B14} concerning the fourth one). 


Denote by $\widetilde E_6$ and $\widetilde E_7$ the singularities defined by $\fAp$ with $A'=(3,3,3)$ and $A' = (4,4,2)$ respectively. 
Let $M_{(\widetilde E_6,G)}^\infty$ and $M_{(\widetilde E_7,G)}^\infty$ stand for the corresponding Frobenius manifolds $M_{(\fAp,G)}^{\zeta^\infty}$.
Combining the mirror symmetry isomorphism with the trivial group $G = \{\id\}$ (on the right hand side) with the mirror symmetry isomorphism of our theorem (on the left hand side) we have the isomorphisms of Frobenius manifolds
\begin{align*}
    & M_{(\widetilde E_6,K_1)}^\infty 
    \quad \cong \quad
    M_{\PP^1_{3,3,3}} 
    \quad \cong \quad
    M_{(\widetilde E_6,\{\id\})}^\infty,
    \\
    & M_{(\widetilde E_7,K_2)}^\infty 
    \quad \cong \quad
    M_{\PP^1_{4,4,2}} 
    \quad \cong \quad
    M_{(\widetilde E_7,\{\id\})}^\infty.
\end{align*}

We show in Section~\ref{section: modular forms relations} that these isomorphisms are non--trivial. Namely, the coincidence of the corresponding Frobenius manifold potentials is equivalent to the certain non-trivial identities in the ring of quasimodular forms.

\subsection*{\bf Organization of the paper}
In Section~\ref{section: simple objects} we fix notation and introduce main objects. Section~\ref{section: frobenius manifolds} considers the Frobenius manifolds of a cusp polynomial and Gromov--Witten theory. Mirror symmetry results with the trivial symmetry group are recalled in Section~\ref{sec:mirror symmetry unorbifolded}. In Section~\ref{sec: HH} we compute Hochschild cohomology of the category of $G$--equivariant matrix factorizations of $\fAp(\bx)$. Frobenius manifold $M_{(\fAp,G)}^{\zeta^\infty}$ is introduced in Section~\ref{sec:symmetry group of affine cusp}. This is also the section where main theorem of this paper is introduced. We prove this theorem in Section~\ref{section: proof}. Section~\ref{section: examples} is devoted to the examples and Section~\ref{section: modular forms relations} to the identities between the quasimodular forms.

\subsection*{\bf Acknowledgements}
The first named author is supported by RSF Grant No. 19-71-00086.
The second named author is supported by JSPS KAKENHI Grant Number JP16H06337.

\section{Cusp polynomial and Geigle--Lenzing orbifold projective line}\label{section: simple objects}

\subsection{Cusp polynomial}\label{section: cusp polynomial}
Let $A'=(a'_1,a'_2,a'_3)$ be a triple of positive integers. We can associate to $A'$ the following polynomial 
\begin{equation}
x_1^{a'_1}+x_2^{a'_2}+x_3^{a'_3}-c^{-1}\cdot x_1 x_2 x_3,\quad c\in\CC\setminus\{0\},
\end{equation}
that we call {\it cusp polynomial}.
Consider its {\em universal unfolding}
\begin{equation}
F_{A'}({\bf x};{\bf s},s_{\mu_{A'}}):=x_1^{a'_1}+x_2^{a'_2}+x_3^{a'_3}-(s_{\mu_{A'}})^{-1}\cdot x_1 x_2 x_3
+s_1\cdot 1+\sum_{i=1}^{3} \sum_{j=1}^{a_i'-1}s_{i,j}\cdot x_i^j.
\end{equation}
For the later use, put $f_{A'}({\bf x};s_{\mu_{A'}}):=F_{A'}({\bf x};{\bf 0},s_{\mu_{A'}})$. Namely, we may identity $c$ above with the unfolding paramater $s_{\mu_{A'}}$.
We often regard $f_{A'}({\bf x};s_{\mu_{A'}})$ as a holomorphic map $\X^s\longrightarrow M^s$ 
for some suitable neighborhood of the origin $\X^s \subset \CC^{4}$ and $M^s:=\{s_{\mu_{A'}}\in \CC\backslash\{0\}~|~|\! |s_{\mu_{A'}}|\! |<\epsilon\}$.

In what follows we also use the notation: 
\begin{equation}
\mu_{A'}:=2+\sum_{i=1}^3 \left(a'_i-1\right),\quad \chi_{A'}:=2+\sum_{i=1}^3 \left(\frac{1}{a'_i}-1\right).
\end{equation}
For $\chi_{A'} \le 0$ the function $F_{A^\prime}$ is a miniversal unfolding considering $s_1$,$s_{i,j}$ and $s^{-1}_{\mu_{A^\prime}}$ as the unfolding parameters. 
Note that we have exactly $\mu_{A'}$ parameters in the unfolding $F_{A'}$. This number will become later the rank of the Frobenius manifold of cups polynomial. In what follows let $\bs$ vary in $\mathcal{S} := \CC^{\mu_{A'}-1} \times M^s$.

\begin{remark}\label{remark: affine cusp singularities}
If $\chi_{A'} < 0$, the point $0 \in \CC^3$ is not the only isolated critical point of $\fAp$. For the later purposes we do not need these additional critical points. Therefore  for any fixed $q$, we should consider $\fAp$ on a small neighborhood of $0 \in \CC^3$, not containing any other critical points.
\end{remark}

Associated to $\fAp$, for every fixed $c$, we consider the $\CC$--algebra 
\[
    \Jac(\fAp) := \O_{\CC^3,0}/ (\p_{x_1}\fAp,\p_{x_2}\fAp,\p_{x_3}\fAp) 
\]
with the following basis
\begin{equation}\label{eq: jac basis}
    e_1' := [1], \quad e_{\mu_{A'}}' := [x_1x_2x_3], \quad e_{i,j}' := [x_i^j], \quad 1\le i\le 3, 1 \le j \le a_i'-1.
\end{equation}
The vector $e_1'$ is the unit and the product structure of $\Jac(\fAp)$ given by
\begin{align}
    & e_{i_1,j_1}' \circ e_{i_2,j_2}' = 
    \begin{cases}
        \delta_{i_1,i_2} e_{i_1,j_1+j_2}' \quad & j_1 + j_2 < a_{i_1}',
        \\
        \dfrac{1}{c \cdot a_{i_1}'} e_{\mu_{A'}}' \quad & j_1+j_2 = a_{i_1}';
    \end{cases}
    \\
    & e_{i_1,j_1}' \circ e_{\mu_{A'}}' = 0.
\end{align}

The coordinates $s_\bullet$ are \textit{dual} to the basis we fix in the following sense.
\begin{align*}
    e_1' = \left[ \frac{\p F_{A'}}{\p s_1} \right], \ e_{\mu_{A'}}' = s_{\mu_{A'}}^2 \cdot \left[ \frac{\p F_{A'}}{\p s_{\mu_{A'}}} \right],
    \quad e_{i,j}' := \left[ \frac{\p F_{A'}}{\p s_{i,j}} \right]
\end{align*}

The algebra $\Jac(\fAp)$ with the product defined can be endowed with the pairing making it a Frobenius algebra. We will comment on this later because this pairing is only fixed after the certain addition choice is made - choice of a primitive form.

Assuming $c$ as a parameter, we can consider $\Jac(\fAp)$ as a $\CC(c)$--module. It has an \textit{extension} $\overline{\Jac}(\fAp)$, that is a $\CC[c]$--module, spanned by $[1]$, $[c^{-1}x_1x_2x_3]$ and $[x_i^j]$. By using the explicit form of the Jacobian ideal we have
\[
    \overline{\Jac}(\fAp) \mid_{c=0} \ \cong 
\CC[x_1,x_2,x_3]\left/\left(x_1x_2, x_2x_3, x_1x_3, \ a_1'x_1^{a_1'} - a_2'x_2^{a_2'}, a_2'x_2^{a_2'} - a_3'x_3^{a_3'} \right)\right. .
\]

\subsection{Orbifold projective line }\label{section: GL orbifold}
We introduce the variety $\PP^1_{A,\Lambda}$ to be the certain orbifold projective line of Geigle and Lenzing (see \cite{gl:1}).

For a natural $r \ge 3$ and for a $r$--tuple of positive integer numbers ${A = (a_1,\dots, a_r)}$ let $\Lambda=(\lambda_1,\ldots,\lambda_r)$ be a multiplet of pairwise distinct elements of $\PP^1$ normalized such that $\lambda_1=\infty$, $\lambda_2=0$ and $\lambda_3=1$. 

\begin{definition}\label{orb proj line} \
  \begin{enumerate}
    \item
      Define a ring $R_{A,\Lambda}$ by 
      \begin{subequations}
	\begin{equation*} 
	  R_{A,\Lambda} := \CC[X_1,\dots,X_r]\left/I_{\Lambda}\right.,
	\end{equation*}
	where  $I_\Lambda$ is an ideal generated by the $r-2$ homogeneous polynomials
	\begin{equation*}
	  X_i^{a_i} - X_2^{a_2} + \lambda_i X_1^{a_1},\quad i=3,\dots, r.
	\end{equation*}
      \end{subequations}
    \item
      Denote by $L_{A}$ an abelian group, generated by $r$--letters $\vec{X_i}$, $i=1,\dots ,r$, by
      \begin{subequations}
	\begin{equation*}
	  L_{A} := \bigoplus_{i=1}^r\ZZ\vec{X}_i \left / M_{A}\right.,
	\end{equation*}
	for being $M_{A}$ the subgroup generated by the elements
	\begin{equation*}
	  a_i\vec{X}_i - a_j\vec{X}_j, \quad 1\le i<j\le r.
	\end{equation*}
      \end{subequations}
    \item
      The \textit{orbifold projective line} of type $(A,\Lambda)$ in the quotient stack $\PP^1_{A,\Lambda}$ defined by:
      \begin{equation*}
	\PP^1_{A,\Lambda}:=\left[\left({\rm Spec}(R_{A,\Lambda})\backslash\{0\}\right)/{\rm Spec}({\CC L_{A}})\right],
      \end{equation*}
  \end{enumerate}
\end{definition}
An orbifold projective line of type $(A,\Lambda)$ is a Deligne--Mumford stack whose coarse moduli space is 
a smooth projective line $\PP^1$. Then the numbers $\lambda_1, \dots, \lambda_n$ are the coordinates of the projection on $\PP^1$ of the points having a non--trivial stabilizer.
In what follows we skip the letter $\Lambda$ in the notaion of $\PP^1_{A,\Lambda}$ when $r=3$.

Following \cite{cr:1} we associate to $\PP^1_{A,\Lambda}$ the orbifold cohomology ring $H^*_{orb}\left( \PP^1_{A, \Lambda}, \QQ \right)$. It is an associative commutative algera with the basis $\Delta_1, \Delta_{\mu}$, $\Delta_{i,j}$ with $1 \le i \le r$, $1 \le j \le a_i-1$ such that:
\begin{equation}\label{eq: qcoh basis}
H^0_{orb}(\PP^1_{A,\Lambda}) \cong \QQ \Delta_1,\ H^2_{orb}(\PP^1_{A,\Lambda}) \cong \QQ \Delta_\mu, \ \Delta_{i,j} \in H^{2\frac{j}{a_i}}_{orb}(\PP^1_{A,\Lambda}),
\end{equation}
and the pairing $\eta$ having only the following non--zero values:
$$
\eta \left( \Delta_1, \Delta_\mu \right) = 1, \ \eta \left( \Delta_{i,j}, \Delta_{i,a_i-j} \right) = 1/a_i. 
$$

Compared to the case of $\Jac(\fAp)$ above, here we can not introduce the product strucure before giving the certain additional data --- the three-point correlators of Gromov--Witten theory of $\PP^1_{A,\Lambda}$.

\subsection{Symmetry group}

Let $G$ be a finite abelian subgroup of ${\rm SL}(3,\CC)$ acting diagonally on $\CC^3$ such that $f_{A'}({\bf x})$ is invariant under its natural action. 
For each $g \in G$, denote by $N_g$ the dimension of the fixed locus which is a linear subspace of $\CC^3$ and by $d_g := N - N_g$.
Denote $\epi[x] = \exp(2\pi \sqrt{-1} \cdot x)$. Each element $g\in G$ has a unique expression of the form
\begin{equation}
g={\rm diag}\left (\epi\left[\frac{k_1}{r}\right], \epi\left[\frac{k_2}{r}\right],\epi\left[\frac{k_3}{r}\right] \right) \quad \mbox{with } 0 \leq k_i < r,
\end{equation}
where $r$ is the order of $g$. 
The {\em age} of $g$, which is introduced in \cite{IR}, is defined as the rational number 
\begin{equation}
{\rm age}(g) := \frac{1}{r}\sum_{i=1}^3 k_i. 
\end{equation}
Since we assume that $G\subset {\rm SL}(3,\CC)$, this number is an integer. 
Define $j_G$ to be the number of elements  $g \in G$ such that ${\rm age}(g)=1$ and $N_g=0$. 
If $g$ is an element of age $1$ 
, then $g^{-1}
$ is an element of age $2$. Therefore, the number $j_G$ is also the number of elements $g \in G$ such that ${\rm age}(g)=2$.

For $i=1,2,3$, let $K_i$ be the maximal subgroup of $G$ fixing the $i$-th coordinate $x_i$, whose 
order $|K_i|$ is denoted by $n_i$.
\begin{proposition}[{\cite[Corollary~2]{et:1}}]\label{prop:G}
We have
\begin{equation}
\left|G\right|=1+2j_G+\sum_{i=1}^3\left(n_i-1\right),
\end{equation}
where $j_G$ is the number of elements  $g \in G$ such that ${\rm age}(g)=1$ and $N_g=0$.
\end{proposition}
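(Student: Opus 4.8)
The plan is to compute $|G|$ by stratifying $G$ according to the dimension $N_g$ of the fixed locus and counting each stratum separately. First I would record the explicit description of $G$: since $\fAp$ is invariant under a diagonal $g = \mathrm{diag}(g_1,g_2,g_3)$ exactly when each monomial is preserved, invariance forces $g_i^{a_i'}=1$ for $i=1,2,3$ together with $g_1g_2g_3=1$; writing $g_i = \epi[k_i/r]$ as in the text, the fixed locus is the coordinate subspace cut out by the equations $g_i = 1$, so $N_g = \#\{\, i \mid g_i = 1 \,\}$. Note that the $\SL(3,\CC)$ condition $\det g = g_1g_2g_3 = 1$ is automatic from invariance of the cubic term $x_1x_2x_3$.

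Next I would analyze the four possible values $N_g \in \{0,1,2,3\}$. The stratum $N_g = 3$ consists of $\id$ alone, contributing $1$. The stratum $N_g = 2$ is empty: if two of the $g_i$ equal $1$, then $\det g = 1$ forces the third to equal $1$ as well, so $g = \id$. For $N_g = 1$ I would observe that such $g$ fix exactly one coordinate, hence lie in exactly one $K_i$ and are not the identity; moreover the subgroups $K_i$ pairwise intersect only in $\id$ (two fixed coordinates again force $g=\id$). By inclusion--exclusion the number of elements with $N_g = 1$ equals $|K_1 \cup K_2 \cup K_3| - 1 = \sum_{i=1}^3 (n_i - 1)$.

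The remaining stratum $N_g = 0$ is where the age enters. For such $g$ every $k_i$ satisfies $0 < k_i < r$, so $0 < \age(g) = (k_1+k_2+k_3)/r < 3$; since $G \subset \SL(3,\CC)$ makes $\age(g)$ an integer, necessarily $\age(g) \in \{1,2\}$. The key step is that inversion $g \mapsto g^{-1}$ preserves $N_g$ and sends $\age(g)$ to $3 - \age(g)$ (because $k_i \mapsto r-k_i$), hence defines a bijection between the age-$1$ and age-$2$ elements of this stratum. Consequently $\#\{N_g = 0\} = 2\, j_G$, and summing the four strata gives $|G| = 1 + 2j_G + \sum_{i=1}^3 (n_i-1)$, as claimed.

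The argument is essentially a careful bookkeeping, so there is no deep obstacle; the one point requiring care is the $N_g = 0$ stratum, namely verifying that no age other than $1$ or $2$ can occur and that inversion is a genuine fixed-point-free involution exchanging the two age classes (an age-$1$ element cannot be inverse to itself, as $g = g^{-1}$ would force each $k_i = r/2$ and $\age(g) = 3/2 \notin \ZZ$).
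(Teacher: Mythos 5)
Your proof is correct. The paper does not actually prove this proposition itself---it is quoted from \cite[Corollary~2]{et:1}---but your argument (stratify $G$ by $N_g$, note that $N_g=2$ is impossible in $\SL(3,\CC)$, count the $N_g=1$ elements via the pairwise-trivially-intersecting subgroups $K_i$, and pair off the $N_g=0$ elements by the involution $g\mapsto g^{-1}$, which sends age to $3-\mathrm{age}$) is exactly the counting behind that result, and your key step for the $N_g=0$ stratum is precisely the remark the paper makes immediately before the proposition.
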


For any $g \in G$ let $\Fix(g) \subseteq \CC^3$ denote the fixed locus of $g$ and $I_g^c$ be a subset of $\{1,2,3\}$, s.t. $g(x_k) \neq x_k$ for $k \in I_g^c$.
Let $f^g$ be the following polynomial
\[
    f^g := \fAp \mid_{x_k =0, \ k \in I_g^c} \ = \fAp \mid_{\Fix(g)}.
\]
We have $f^g = x_i^{a'_i}$ for $g \in K_i$.

For $i=1,2,3$, set
\begin{equation}
a_i:=\frac{a'_i}{\left|G/K_i\right|},
\end{equation}
and define a tuple of positive integers $A=(a_1, \ldots , a_r)$ by
\begin{equation}
(a_1, \ldots, a_r) = \left( \frac{a'_i}{|G/K_i|} \ast |K_i|, i=1,2,3 \right) ,
\end{equation}
where $u \ast v$ denotes $v$ copies of the integer $u$ and we omit numbers equal to one on the right-hand side. 
Set 
\begin{equation}
\mu_{A}:=2+\sum_{i=1}^r \left(a_i-1\right),\quad \chi_{A}:=2+\sum_{i=1}^r \left(\frac{1}{a_i}-1\right).
\end{equation}  

\section{Frobenius manifolds of a cusp polynomial and Gromov-Witten theory}\label{section: frobenius manifolds}

Frobenius manifolds were introduced by B.Dubrovin (cf. \cite{du:2}). Important examples of Frobenius manifolds originate from singularity theory and Gromov--Witten theory.

\begin{definition}
Let $M=(M,\O_{M})$ be a connected complex manifold of dimension $\mu$
whose holomorphic tangent sheaf and cotangent sheaf 
are denoted by $\T_{M}$ and $\Omega_M^1$ respectively
and let $d$ be a complex number.
A {\it Frobenius structure of rank $\mu$ and conformal dimension $d$ on M} is a tuple $(\eta, \circ , e,E)$, where $\eta$ is a non--degenerate $\ZZ / 2\ZZ$--graded $\O_{M}$--symmetric bilinear form on $\T_{M}$, $\circ $ is an $\O_{M}$--bilinear product on $\T_{M}$, defining an associative and commutative $\O_{M}$--algebra structure with a unit $e$, and $E$ is a holomorphic vector field on $M$, called the Euler vector field, which are subject to the following axioms:
\begin{enumerate}
\item The product $\circ$ is self--adjoint with respect to $\eta$: that is,
\begin{equation*}
\eta(\delta\circ\delta',\delta'')=\eta(\delta,\delta'\circ\delta''),\quad
\delta,\delta',\delta''\in\T_M. 
\end{equation*} 
\item The {\rm Levi}--{\rm Civita} connection $\ns:\T_M\otimes_{\O_M}\T_M\to\T_M$ 
with respect to $\eta$ is flat. That is, 
\begin{equation*}
[\ns_\delta,\ns_{\delta'}]=\ns_{[\delta,\delta']},\quad \delta,\delta'\in\T_M.
\end{equation*}
\item The tensor $C:\T_M\otimes_{\O_M}\T_M\to \T_M$  defined by 
$C_\delta\delta':=\delta\circ\delta'$, $(\delta,\delta'\in\T_M)$ is flat: that is,
\begin{equation*}
\ns C=0.
\end{equation*} 
\item The unit element $e$ of the $\circ $-algebra is a 
$\ns$-flat holomorphic vector field: that is,
\begin{equation*}
\ns e=0.
\end{equation*} 
\item The metric $\eta$ and the product $\circ$ are homogeneous of degree 
$2-d$ ($d\in\CC$) and $1$ respectively with respect to the Lie derivative 
$Lie_{E}$ of the {\rm Euler} vector field $E$: that is,
\begin{equation*}
Lie_E(\eta)=(2-d)\eta,\quad Lie_E(\circ)=\circ.
\end{equation*}
\end{enumerate}
A manifold $M$ equipped with a Frobenius structure $(\eta, \circ , e,E)$ is called a {\it Frobenius manifold}.
\end{definition}

The structure of a Frobenius manifold can be locally described by the analytic function $\F$, called \textit{potential}. Namely, let $n = \dim M$ and $t_1,\dots,t_n$ be flat coodinates of the Levi-Civita connection above. At a point $p \in M$ consider let $T_pM = \CC \langle \frac{\p}{\p t_1} ,\dots,\frac{\p}{\p t_n}\rangle$. Assume in addition that $t_1$ is such that $e = \frac{\p}{\p t_1}$, $\eta_{ij} = \eta(\frac{\p}{\p t_i},\frac{\p}{\p t_j})$ --the components of $\eta$ in the basis fixed and $\eta^{ij}$ being components of $\eta^{-1}$. Then there is a function $\F=\F(t_1,\dots,t_n)$, such that
\begin{align*}
    & \frac{\p}{\p t_i} \circ \frac{\p}{\p t_j} = \sum_{k,l=1}^n \frac{\p^3 \F}{\p t_i \p t_j \p t_l} \eta^{lk} \frac{\p}{\p t_k},
    \\
    & E \cdot \F = (3 - d) \F + \text{terms, quadratic in $t_\bullet$}.
\end{align*}
Locally the potential $\F$ fully encodes the data of a Frobenius manifold $M$.

\subsection{Orbifold Gromov-Witten theory}
In \cite{cr:1} the authors gave the treatment of GW-theory for an orbifold $\X$. Let $I\X$ be the inertia orbifold of $\X$. Fixing $\beta \in H_2(I\X, \mathbb Z)$ the authors define the moduli space $\overline {\mathcal M}_{g,n}(\X, \beta)$ of degree $\beta$ stable orbifold maps from the genus $g$ curve with $n$ marked points to $I\X$. 
Together with the suitable fundamental cycle $[ \overline {\mathcal M}_{g,n}(\X, \beta) ] ^{vir}$ one can introduce the correlators. Define ${ev_i: \overline {\mathcal M}_{g,n}(\X, \beta) \rightarrow I\X}$ -- the map sending the stable orbifold map with $n$ markings to its value at the $i$-th marked point.

Let $\gamma_i \in H^*_{orb} (I\X, \mathbb Q)$ -- the elements of the Chen-Ruan orbifold cohomology ring. The Gromov-Witten theory correlators are defined by:
$$
    \langle \gamma_1 , \dots , \gamma_n \rangle_{g,n,\beta}^\X := \int_{ [ \overline {\mathcal M}_{g,n}(\X, \beta) ] ^{vir} } ev_1^* \gamma_1 \wedge \dots \wedge ev_n^* \gamma_n.
$$
It is convenient to assemble the numbers obtained into a generating function called genus $g$ potential of the orbifold GW theory. Let 
\[
    \bt := t_1 \Delta_1 + t_{\mu} \Delta_{\mu} +  \sum_{i=1}^r\sum_{j=1}^{a_r-1} \Delta_{i,j} t_{i,j}
\] 
for the formal parameters $t_\bullet$ and the basis of $H^*_{orb}(I\X, \mathbb Q)$ as in \eqref{eq: qcoh basis}. This definition fixes the certain connection between $t_\bullet$ and $\Delta_\bullet$. Namely, we have $\dfrac{\p \bt}{\p t_\bullet} = \Delta_\bullet$. We will call the coordinate $t_\bullet$ \textit{dual} to $\Delta_\bullet$.

The genus $g$ potential is a formal power series in $t_\bullet$. It reads
$$
    \mathcal F^{\X}_g := \sum_{n, \beta} \frac{1}{n!} \langle \textbf t , \dots , \textbf t \rangle_{g,n,\beta}^\X.
$$
The most important for us will be the genus zero potential. Due to the geometrical properties of the moduli space of curves, it is a solution to the WDVV equation and defines a Frobenius manifold that we denote by $M_\X$.

We have the following first examples.
\begin{align*}
 \F_{\PP^1_{2,2,2}} &= \frac{q^4}{4} + \frac{q^2}{2} \left(t_{1,1}^2+t_{2,1}^2+t_{3,1}^2\right) + q t_{1,1} t_{2,1} t_{3,1} 
 \\
 & 
 -\frac{1}{96}\left( t_{1,1}^4 + t_{2,1}^4 + t_{3,1}^4\right) +\frac{1}{4} t_1 \left(t_{1,1}^2+t_{2,1}^2+t_{3,1}^2\right) + \frac{1}{2} t_{5} t_1^2,
\\
    \F_{\PP^1_{2,3,3}} &= \frac{q^{12}}{12} +\frac{1}{2} q^8 t_{2,2} t_{3,2} + q^6 \left(\frac{t_{2,2}^3}{6}+\frac{t_{3,2}^3}{6}+\frac{t_{1,1}^2}{2}\right) +q^5 t_{1,1} t_{2,2} t_{3,2}
    \\
    &\ +q^4 \left(\frac{5}{18} t_{3,2}^2 t_{2,2}^2+\frac{1}{6} t_{3,1} t_{2,2}^2+\frac{1}{6} t_{2,1} t_{3,2}^2+t_{2,1} t_{3,1}\right) 
    \\
    &\ +q^3 \left(\frac{t_{1,1}^3}{3}+\frac{1}{6} t_{2,2}^3 t_{1,1}+\frac{1}{6} t_{3,2}^3 t_{1,1}+t_{2,1} t_{2,2} t_{1,1}+t_{3,1} t_{3,2} t_{1,1}\right) 
    \\
    &\ +q^2 \left(\frac{1}{72} t_{3,2} t_{2,2}^4+\frac{1}{6} t_{2,1} t_{3,2} t_{2,2}^2+\frac{1}{72} t_{3,2}^4 t_{2,2} \right.
    \\
    &\quad \left.+\frac{1}{2} t_{3,1}^2 t_{2,2}+\frac{1}{6} t_{3,1} t_{3,2}^2 t_{2,2}+\frac{1}{2} t_{1,1}^2 t_{3,2} t_{2,2}+\frac{1}{2} t_{2,1}^2 t_{3,2}\right)
    \\
    &\ + q \left(\frac{1}{36} t_{1,1} t_{3,2}^2 t_{2,2}^2+\frac{1}{6} t_{1,1} t_{3,1} t_{2,2}^2+\frac{1}{6} t_{1,1} t_{2,1} t_{3,2}^2+t_{1,1} t_{2,1} t_{3,1}\right)
    \\
    &\ -\frac{t_{2,2}^6 + t_{3,2}^6}{19440} +\frac{1}{648} t_{2,1} t_{2,2}^4+\frac{1}{648} t_{3,1} t_{3,2}^4-\frac{t_{1,1}^4}{96} + \frac{t_{2,1}^3}{18}+\frac{t_{3,1}^3}{18}
    \\
    &\ -\frac{1}{36} t_{2,2}^2 t_{2,1}^2-\frac{1}{36} t_{3,1}^2 t_{3,2}^2
    +\frac{1}{4} t_1 t_{1,1}^2+\frac{1}{3} t_1 t_{2,1} t_{2,2}+\frac{1}{3} t_1 t_{3,1} t_{3,2}+\frac{1}{2} t_{6} t_1^2,
\end{align*}
where we use $q := \exp(t_{\mu})$.

These potentials were found by P.Rossi (cf. \cite[Example 3.2]{rossi}). However he had a missprint in $\F_{\PP^1_{2,2,2}}$ that we fix above.

\subsection{Singularity theory}
For any fixed $f \in \O_{\CC^N,0}$ defining an isolated singularity let $\Jac(f) := \O_{\CC^N,0}/ (\p_{x_1}f,\dots, \p_{x_N}f)$ and $\mu := \dim\Jac(f)$. Given an unfolding $F: \CC^n \times \CC^\mu \to \CC$ of $f$, introduce Frobenius manifold structure on the base space $\mathcal{S}$ of an unfolding. This is a $\mu$--dimensional open subspace of $\CC^\mu$. The product $\circ$ is induced from the critical sheaf 
\[
 \O_\C := \O_{\CC^n\times \mathcal{S}} / (\p_{x_1}F, \dots, \p_{x_n}F)
\]
by the projection $p: \CC^n\otimes \mathcal{S} \to \CC^n$. Namely, we have an isomorphism $\T_{\mathcal{S}} \to p_* \O_\C$ given by $\delta \mapsto [\delta \cdot F]$. The product $\circ$ on $\T_{\mathcal{S}}$ is then defined as a pullback of the natural product of $p_*\O_\C$ via this isomorphism. 

The pairing $\eta$ is defined via the Poincare residue pairing. However, this requires an additional choice of a volume form. 
The result of K.Saito assures that for a special choice of a volume form called a \textit{primitive form}, there is a flat connection $\ns$ that is metric with respect to $\eta$ (cf. \cite{sa:1}).
In what follows denote by $M_{f}^\zeta$ the Frobenius manifold structure defined by $f$ with a primitive form $\zeta$. 
The details of this construction can be found in \cite{sa:1,st:2} and \cite{st:1} for the case of cusp polynomials.

For any primitive form $\zeta$ and a non--zero constant $c \in \CC$, $\zeta' := c \zeta$ is again a primitive form. The two Frobenius manifolds $M_f^\zeta$ and $M_f^{\zeta'}$ have the same product structure, but the pairings are related by $\eta' = c^2 \cdot \eta$. In general not all primitive forms of a given singularity are connected by a simple rescaling. Different choices of a primitive form can give Frobenius manifolds that are not isomorphic (cf. \cite{ms}).

Choice of a primitive form seriously affects the potential of the Frobenius manifold $M_f^\zeta$. Recall that the potential is written in flat coodinates of $\eta$. The choice of a primitive form fixed the expression of the flat coodinates $\bt$ via the base space coordinates $\bs$.

\subsubsection{Cusp polynomials}
For $\fAp$ being a cusp polynomial, one considers the unfolding $F_{A'}$ as in Section~\ref{section: cusp polynomial}. 
The primitive forms for this unfolding were considered by K.Saito and Ishibashi--Shiraishi--Takahashi (see \cite{sa:1} and \cite{ist:1} for different values of $\chi_{A^\prime}$). In particular, there is a special choice of primitive form $\zeta = \zeta^\infty$, such that
\begin{align*}
  &\zeta^\infty = d^3\bx \ s_{\mu_{A^\prime}}^{-1} && \text{ for } \chi_{A^\prime} > 0,
   \\
   &\zeta^\infty = d^3\bx (s_{\mu_{A^\prime}}^{-1}  + O(\bs)), &&\text{ for }  \chi_{A^\prime} \le 0.
\end{align*}
These primitive forms are called \textit{primitive forms at infinity}.
In what follows we denote by $M_\fAp^\infty$ the Frobenius manifold of a cusp polynomial with the primitive form $\zeta^\infty$. Let $\F_{\fAp}^\infty$ be its potential, written in the flat coordinates $t_\bullet$ dual to the basis \eqref{eq: jac basis}.

The connection between these flat coordinates $t_\bullet$ and coordinates $s_\bullet$ of the unfolding is given by the functions $t_\bullet = t_\bullet(\bs)$, satisfying 
\[
    s_{\mu_{A'}} = \exp(t_{\mu_{A'}}), \ \frac{\p t_\alpha}{\p s_\beta}\mid_{\bs = 0, \ s_{\mu_{A'}} = 0} = \delta_{\alpha,\beta}, \quad t_{\alpha}\mid_{\bs = 0, \ s_{\mu_{A'}} = 0} = 0,
\]
for all indices $\alpha,\beta$. We have
\begin{align*}
& t_{1} \text{ is the flat coordinate dual to } [1] \text{ at the limit } \bs=s_{\mu_{A'}}=0,
\\
& t_{i,j} \text{ is the flat coordinate dual to } [x_{i}]^j \text{ at the limit } \bs=s_{\mu_{A'}}=0,
\\
& t_{\mu_{A'}}  \text{ is the flat coordinate dual to } (s_{\mu_{A'}})^{-1}[x_1x_2x_3] \text{ at the limit } \bs=s_{\mu_{A'}}=0. 
\end{align*}
Recall that the classes on the right hand side are exactly the classes spanning $\overline{\Jac}(\fAp)$.

In the flat coordinates the only non--zero values of $\eta(\cdot,\cdot)$ are
\[
    \eta\left(\frac{\p}{\p t_1},\frac{\p}{\p t_{\mu_{A'}}}\right) = 1,
    \quad
    \eta\left(\frac{\p}{\p t_{i,j}},\frac{\p}{\p t_{i,a_i'-j}}\right) = \frac{1}{a_i'}, \ 1 \le i \le 3, \ 1 \le j\le a_i-1.
\]
The Euler field reads
\[
    E = t_1 \frac{\p}{\p t_1} + \sum_{i=1}^3\sum_{j=1}^{a_i'-1} \frac{a_i'-j}{a_i'} t_{i,j} \frac{\p}{\p t_{i,j}} + \chi_{A'} \frac{\p}{\p t_{\mu_{A'}}}.
\]

The details of the construction of these Frobenius manifolds can be found in \cite{ist:2}.

\subsection{$G$--invariants of $M_{\fAp}^\infty$}
For a given cusp polynomial $\fAp(\bs)$ and $G \subset G_{\fAp}$, we can consider $\A_{\fAp}^G := \left( \Jac(\fAp) \right)^G$. Namely, the $G$--invariant subspace of $\Jac(\fAp)$. In particular, it has the basis (compare it to \eqref{eq: jac basis})
\begin{equation}
    e_1 := [1], \quad e_{\mu_{A}} := [x_1x_2x_3], \quad e_{i,j} := [x_i^{j\cdot n_i}], \quad 1\le i\le 3, 1 \le j \le a_i-1.
\end{equation}
Consider the coordinates $\bs$ of the unfolding $F(\bx,\bs)$. We call the coordinates $s_\bullet$, dual to these vectors, \textit{G--invariant}. Namely, these are $s_1$, $s_{\mu_{A'}}$ and $s_{i,j\cdot n_i}$ for $i,j$ as above. Set
\[
    \mathcal{S}^G := \mathcal{S} \mid_{s_{i,k} = 0, \ k \not\in n_i \ZZ}.
\]
Obviously, the $G$--invariant coordinates $s_\bullet$ are coordinates of $\mathcal{S}^G$.

Let $(\eta,\circ,e,E)$ be the Frobenius manifold structure of $M_\fAp^\infty$. It follows immediately by the definition that the product $\circ: \T_{\mathcal{S}} \otimes \T_{\mathcal{S}} \to \T_{\mathcal{S}}$ descends to the associative and commutative product $\widetilde\circ: \T_{\mathcal{S}^G} \otimes \T_{\mathcal{S}^G} \to \T_{\mathcal{S}^G}$.

Consider now the flat coordinates $t_\bullet = t_\bullet(\bs)$ above. We have
\begin{align}
    &t_{1} \mid_{\mathcal{S}^G} \ = s_{1} + \text{terms, at least quadratic in $\bs$},
    \\
    &t_{i,j \cdot n_i} \mid_{\mathcal{S}^G} \ = s_{i, j \cdot n_i} + \text{terms, at least quadratic in $\bs$},
    \\
    &t_{i,k} \mid_{\mathcal{S}^G} \ = \text{terms, at least quadratic in $\bs$}, \quad k \not\in n_i\ZZ.
\end{align}
The special coordinate $t_{\mu_{A'}}$ is not changed by restricting to $\mathcal{S}^G$.
It follows that $t_1$, $t_{\mu_{A'}}$ and $t_{i,j\cdot n_i}$, being restricted to $\mathcal{S}^G$, can serve as the coodinates too. Note that $\eta$, restricted to these coordinates is non--degenerate. 

We can consider the restriction of $(\eta,\circ,e,E)$ to the $G$--invariants. Denote
\[
    \left(M_\fAp^\infty\right)^G := M_\fAp^\infty \mid_{t_{i,k} = 0, \ k \not\in n_i\ZZ}.
\]
The vector field $e$ belongs to the tangent sheaf of $\left(M_\fAp^\infty\right)^G$. The vector field $E$ is projected to this tangent sheaf in a straightforward way. By the discussion above we have that $\widetilde\circ$ gives a suitable product and $\eta$ restricts too.

Summing all together we get that $(M_\fAp^\infty)^G$ is a Frobenius submanifold of $M_\fAp^\infty$. 
Let $\F_\fAp$ be the potential of $M_\fAp^\infty$. Then the potential $\F_\fAp^G$ of $(M_\fAp^\infty)^G$ is obtained from $\F_\fAp$ by setting $t_{i,k} = 0$, for all $1\le i \le 3$ and $k \not\in n_i\ZZ$.


\section{Mirror symmetry with a trivial symmetry group}\label{sec:mirror symmetry unorbifolded}
In this section we recall mirror symmetry results with the trivial symmetry group. 

\subsection{Mirror symmetry theorem}
The following mirror symmetry theorem should be considered as the aggregate result of several different papers: \cite{st:1}, \cite{st:3}, \cite{ist:2}, \cite{rossi}, \cite{mr}, \cite{ms}.

\begin{theorem}[{\cite[Theorem~4.1]{st:1}, \cite[Theorem~3.6]{st:3}, \cite[Corollary~4.5]{ist:2}}]\label{theorem: msUnorbifolded}
  Frobenius manifold 
  of $\fAp$ with the unfolding $F_{A^\prime}$ and the primitive form $\zeta^\infty$ is isomorphic to the Frobenius manifold of the Gromov--Witten theory of $\PP_{A^\prime}$. 
\end{theorem}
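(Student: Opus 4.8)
The plan is to realize both $M_{\fAp}^\infty$ and $M_{\PP^1_{A'}}$ as conformal Frobenius manifolds sharing the same rank, conformal dimension and Euler structure, and then to promote a linear identification of their underlying \emph{classical} Frobenius algebras to a full isomorphism by a reconstruction argument built on homogeneity and the $\wdvv$ equation. First I would record the coarse invariants: both manifolds have rank $\mu_{A'} = 2 + \sum_{i=1}^3(a_i'-1)$ and conformal dimension $d=1$, the complex dimension of $\PP^1_{A'}$. Next I would match the Euler fields. On the singularity side it is $E = t_1 \p_{t_1} + \sum_{i,j}\frac{a_i'-j}{a_i'} t_{i,j}\p_{t_{i,j}} + \chi_{A'}\p_{t_{\mu_{A'}}}$, while on the Gromov--Witten side the weights are forced by the cohomological degrees in \eqref{eq: qcoh basis}: the class $\Delta_{i,j}\in H^{2j/a_i}_{orb}$ carries weight $(a_i'-j)/a_i'$, and the coefficient of the degree-two direction equals the orbifold Euler number $\chi_{A'}$. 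This fixes a graded linear isomorphism $e_1'\leftrightarrow\Delta_1$, $e_{\mu_{A'}}'\leftrightarrow\Delta_\mu$, $e_{i,j}'\leftrightarrow\Delta_{i,j}$, under which I would verify that the residue pairing normalized by $\zeta^\infty$ reproduces the values $\eta(\Delta_1,\Delta_\mu)=1$ and $\eta(\Delta_{i,j},\Delta_{i,a_i'-j})=1/a_i'$.

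The second step aligns the two flat structures at their tame limits. The primitive form at infinity $\zeta^\infty$ is chosen precisely so that $s_{\mu_{A'}}=\exp(t_{\mu_{A'}})$, which I would identify with the Novikov variable $q=\exp(t_\mu)$ of the orbifold Gromov--Witten theory. Sending $q\to 0$ recovers the classical orbifold cup product on $H^*_{orb}(\PP^1_{A'})$, while sending $s_{\mu_{A'}}\to 0$ recovers the Frobenius algebra $\overline{\Jac}(\fAp)|_{c=0}$ computed in Section~\ref{section: cusp polynomial}; the content of this step is that these two classical Frobenius algebras agree under the identification above, pairings included.

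The third and \textbf{main} step is the reconstruction that upgrades the agreement of classical data to an equality of full potentials. Expanding $\F_{\fAp}^\infty$ and $\F_{\PP^1_{A'}}$ in powers of $q=\exp(t_{\mu_{A'}})$, the homogeneity imposed by $E$ together with the $\wdvv$ equation constrains the coefficient of each $q^d$ recursively from lower-degree coefficients and the classical ($q^0$) term. For $\chi_{A'}>0$ this recursion, together with the finiteness of the quantum corrections, determines the potential outright, so matching the initial data suffices. The difficulty concentrates in the cases $\chi_{A'}\le 0$, where $\wdvv$ leaves residual invariants at each degree --- genuine instanton numbers on the Gromov--Witten side, period integrals attached to $\zeta^\infty$ on the singularity side --- that algebra alone does not fix. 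Matching these residual invariants, which in the simple-elliptic case $\chi_{A'}=0$ amounts to identifying the quasimodular data singled out by the primitive form at infinity, is exactly the content delegated to \cite{st:1,st:3,ist:2,rossi,mr,ms}, and is where the careful choice of $\zeta^\infty$ is indispensable. An equivalent route, followed in \cite{ist:2}, replaces this last step by a direct comparison of the Gauss--Manin/period data of $F_{A'}$ with the $I$-function of $\PP^1_{A'}$ through an explicit mirror map.
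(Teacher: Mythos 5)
Your overall strategy --- match rank, Euler field, pairing and the classical limit algebras, then reconstruct the full potential from this initial data by homogeneity and WDVV --- is the skeleton of the paper's route, which runs through Shiraishi's uniqueness theorem (Theorem~\ref{thm:satisfies ISTr}): both $\F_{\fAp}^\infty$ (by \cite{ist:1,ist:2}) and $\F_{\PP^1_{A'}}$ (by Theorem~\ref{theorem: yuuki--GW}) satisfy conditions (i)--(vi) of that theorem, and uniqueness then forces them to coincide, the mirror map being the identity in the coordinates $t_\bullet$. Your first two steps correspond to conditions (i), (ii) and (v), and your identification of the classical Chen--Ruan ring with $\overline{\Jac}(\fAp)|_{c=0}$ is exactly how condition (v) is matched.

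The gap is in your third step. You assert that for $\chi_{A'}\le 0$ the homogeneity-plus-WDVV recursion ``leaves residual invariants at each degree'' which must then be matched by hand, and you delegate that matching to the references. This both mischaracterizes the proof and leaves your scheme under-determined: homogeneity, WDVV and the classical limit alone do \emph{not} pin down the potential, and what is missing is precisely the rest of Theorem~\ref{thm:satisfies ISTr} --- the expansion condition (iii), the splitting condition (iv) (at $t_1=e^{t_{\mu_A}}=0$ the potential decomposes as $\sum_i \G^{(i)}$ with $\G^{(i)}$ a series in the $i$-th block of variables only), and the single normalization (vi) (the coefficient of $\bigl(\prod_i t_{i,1}\bigr)e^{t_{\mu_A}}$ equals $1$). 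Once these are imposed, the uniqueness theorem shows there are no residual invariants at any degree, uniformly in $\chi_{A'}>0$, $=0$, $<0$; no degree-by-degree comparison of instanton numbers with period integrals occurs anywhere in the argument. Conversely, without (iv) and (vi) uniqueness genuinely fails --- compare the remark in Section~\ref{section: examples}, where dropping a single axiom already produces extra WDVV solutions not isomorphic to $M_{\PP^1_A}$ --- so the ``matching of residual invariants'' you defer to the literature is not a finite clean-up step but the whole theorem restated. The hard content actually supplied by the references is different: it is the Gauss--Manin/primitive-form analysis verifying that $\F_{\fAp}^{\infty}$ satisfies (iii), (iv) and (vi); granted that, the uniqueness theorem finishes the proof in all cases at once.
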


Writing the two Frobenius manifolds in the coordinates $t_\bullet$ we have used in Section~\ref{section: frobenius manifolds}, the mirror isomorphism $M_{\fAp}^\infty \to M_{\PP^1_{A'}}$ is given by
\[
    t_1 \mapsto t_1, \ t_{\mu_{A'}} \mapsto t_{\mu_{A'}}, \ t_{i,j} \mapsto t_{i,j}.
\]

The proof of this theorem requires a subtle analysis of the Gauss-Manin connection on the unfolding space $\mathcal{S}$ of $\fAp$ and also the certain uniqueness theorem. On the level of Frobenius manifolds such uniqueness theorem was first formulated in \cite{ist:1}. In what follows we use another uniqueness theorem by Y.Shiraishi, generalizing the latter one.

\begin{theorem}[{\cite[Theorem~3.1]{shi:1}}]\label{thm:satisfies ISTr}
Let $r$, $A$, $\mu_A$ and $\chi_A$ be as in Section~\ref{section: GL orbifold}.
There exists a unique Frobenius manifold $M$ of rank $\mu_A$ and conformal dimension one with flat coordinates 
$(t_1,t_{1,1},\dots ,t_{i,j},\dots ,t_{r,a_r-1},t_{\mu_A})$ satisfying the following conditions$:$
\begin{enumerate}
\item[(i)]
The unit vector field $e$ and the Euler vector field $E$ are given by
\[
e=\frac{\p}{\p t_1},\ E=t_1\frac{\p}{\p t_1}+\sum_{i=1}^{r}\sum_{j=1}^{a_i-1}\frac{a_i-j}{a_i}t_{i,j}\frac{\p}{\p t_{i,j}}
+\chi_A\frac{\p}{\p t_{\mu_A}}.
\]
\item[(ii)]
The non-degenerate symmetric bilinear form $\eta$ on $\T_M$ satisfies
\begin{align*}
&\ \eta\left(\frac{\p}{\p t_1}, \frac{\p}{\p t_{\mu_A}}\right)=
\eta\left(\frac{\p}{\p t_{\mu_A}}, \frac{\p}{\p t_1}\right)=1,\\ 
&\ \eta\left(\frac{\p}{\p t_{i_1,j_1}}, \frac{\p}{\p t_{i_2,j_2}}\right)=
\begin{cases}
\frac{1}{a_{i_1}}\quad i_1=i_2\text{ and }j_2=a_{i_1}-j_1,\\
0 \quad \text{otherwise}.
\end{cases}
\end{align*}
\item[(iii)]
The Frobenius potential $\F$ satisfies $E\F|_{t_{1}=0}=2\F|_{t_{1}=0}$,
\[
\left.\F\right|_{t_1=0}\in\CC\left[[t_{1,1}, \dots, t_{1,a_1-1}, 
\dots, t_{i,j},\dots, t_{r,1}, \dots, t_{r,a_r-1},e^{t_{\mu_A}}]\right].
\]
\item[(iv)] Assume the condition {\rm (iii)}. we have
\begin{equation*}
\F|_{t_1=e^{t_{\mu_A}}=0}=\sum_{i=1}^{r}\G^{(i)}, \quad \G^{(i)}\in \CC[[t_{i,1},\dots, t_{i,a_i-1}]],\ i=1,\dots,r.
\end{equation*}
\item[(v)] 
Assume the condition {\rm (iii)}. In the frame $\frac{\p}{\p t_1}, \frac{\p}{\p t_{1,1}},\dots, 
\frac{\p}{\p t_{r,a_r-1}},\frac{\p}{\p t_{\mu_A}}$ of $\T_M$,
the product $\circ$ can be extended to the limit $t_1=t_{1,1}=\dots=t_{r,a_r-1}=e^{t_{\mu_A}}=0$.
The $\CC$-algebra obtained in this limit is isomorphic to
\[
\CC[x_1,x_2,\dots, x_r]\left/\left(x_ix_j, \ a_ix_i^{a_i}-a_jx_j^{a_j}
\right)_{1\le i< j\le r}\right.,
\]
where $\p/\p t_{i,j}$ are mapped to
$x^{j}_i$ for $i=1,\dots,r, j=1,\dots, a_{i}-1$ and $\p/\p t_{\mu_A}$ is mapped to $a_{1}x_{1}^{a_1}$.
\item[(vi)] The term 
\[
\displaystyle\left(\prod_{i=1}^{r}t_{i,1}\right)e^{t_{\mu_A}}
\]
occurs with the coefficient $1$ in $\F$. 
\end{enumerate}
\end{theorem}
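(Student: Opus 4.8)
\emph{Plan.} I would establish existence and uniqueness separately, treating the potential $\F$ as a formal power series in the flat coordinates and organising everything by the grading induced by $E$.

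\textbf{Existence.} The natural candidate is the Gromov--Witten Frobenius manifold $M_{\PP^1_{A,\Lambda}}$, which by \cite{cr:1} is a Frobenius manifold of rank $\mu_A$ and conformal dimension $1$ (its coarse moduli space being $\PP^1$). I would check (i)--(vi) one at a time. Axioms (i) and (ii) follow by identifying $e$ with $\Delta_1$, the grading part of $E$ with the orbifold degree operator on $H^*_{orb}(\PP^1_{A,\Lambda})$ (with $\chi_A$ the orbifold Euler characteristic entering through the divisor class), and $\eta$ with the orbifold Poincar\'e pairing of \eqref{eq: qcoh basis}. Since $H^2_{orb}$ is one-dimensional and spanned by the divisor dual to $t_{\mu_A}$, the divisor equation forces $\F$ to depend on $t_{\mu_A}$ only through $q:=e^{t_{\mu_A}}$, and the dimension axiom then yields (iii). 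Setting $q=0$ kills all positive-degree stable maps, so the surviving degree-zero invariants localise at the stacky points; this gives the block decomposition $\F_0=\sum_i\G^{(i)}$ of (iv) and identifies the $q=t=0$ algebra with the Chen--Ruan orbifold cup product, hence (v). Finally (vi) amounts to the degree-one invariant $\langle\Delta_{1,1},\dots,\Delta_{r,1}\rangle_{0,r,1}=1$, a direct computation.

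\textbf{Uniqueness.} Here I would run a reconstruction argument. Every monomial in $t_1$, the $t_{i,j}$ and $q=e^{t_{\mu_A}}$ carries an $E$-weight, with $\deg t_1=1$, $\deg t_{i,j}=(a_i-j)/a_i>0$ and $\deg q=\chi_A$. By (iii) the restriction $\F|_{t_1=0}$ is weight-$2$ quasi-homogeneous, so its expansion $\F|_{t_1=0}=\sum_{n\ge 0}q^{n}\F_n$ has each $\F_n$ in the finite-dimensional space of quasi-homogeneous polynomials in the $t_{i,j}$ of weight $2-n\chi_A$. The term $\F_0$ is reconstructed from (iv) and (v): condition (v) fixes the cubic part (the classical structure constants), while (iv) forces $\F_0=\sum_i\G^{(i)}$ with $\G^{(i)}$ depending only on the $i$-th block; restricting $\wdvv$ to a single block, together with homogeneity, then determines $\G^{(i)}$ as the (unique) potential of the $A_{a_i-1}$-singularity. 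The higher terms $\F_n$ with $n\ge1$ are fixed inductively, with (vi) supplying the $n=1$ seed: applying $\wdvv$ with one leg equal to $\p/\p t_{\mu_A}$ and using $\p_{t_{\mu_A}}q^{n}=n\,q^{n}$ produces a recursion lower-triangular in $n$, expressing $\F_n$ through products of $\F_0,\dots,\F_{n-1}$. The full $t_1$-dependence is then restored from the unit axiom $e=\p/\p t_1$, which constrains $\F$ to be quadratic in $t_1$ in the manner dictated by $\eta$.

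\textbf{Main obstacle.} The delicate point is the inductive solvability of $\wdvv$: one must verify that at each order the overdetermined associativity system, fed with the lower-order data, admits a solution and that this solution is unique. I expect to handle this by reducing uniqueness to the homogeneous problem---the difference of two solutions sharing the same seed---and showing, order by order, that it vanishes, invoking the existence half to guarantee consistency of the inhomogeneous system. Controlling the interaction between the $q$-recursion and the block structure of the classical limit, so that no cross-block coefficient is left undetermined, is the part that will require the most care.
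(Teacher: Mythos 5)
First, a point of orientation: the paper never proves this statement --- it is imported verbatim from \cite[Theorem~3.1]{shi:1}, with the companion fact that the Gromov--Witten theory of $\PP^1_{A,\Lambda}$ satisfies conditions (i)--(vi) quoted separately as Theorem~\ref{theorem: yuuki--GW}. So your attempt must be measured against Shiraishi's proof, and its overall architecture --- existence from the Gromov--Witten Frobenius manifold of $\PP^1_{A,\Lambda}$, uniqueness by WDVV reconstruction organized by the Euler grading and the expansion in $q=e^{t_{\mu_A}}$ --- is indeed the architecture of that proof; your existence half is a fair sketch of it.

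Your uniqueness half, however, has a genuine gap at the step asserting that conditions (iv), (v), homogeneity and ``WDVV restricted to a single block'' determine each $\G^{(i)}$, so that $\F_0:=\F|_{t_1=e^{t_{\mu_A}}=0}$ can be reconstructed before the $q$-recursion starts. This is false, and the failure is structural. At $t_1=e^{t_{\mu_A}}=0$ the WDVV equations with all four arguments in block $i$ do reduce to the block-WDVV system (the $t_1$- and $t_{\mu_A}$-channels drop out because $\F_{1\alpha\beta}=\eta_{\alpha\beta}$ and because $\p/\p t_{\mu_A}$ annihilates the $q^0$-term), and WDVV equations with arguments from two different blocks reduce to $0=0$ there; so block WDVV is the \emph{only} constraint available at order $q^0$, and it does not suffice. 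For $a_i=2$ the block has a single variable, block WDVV is vacuous, and the coefficient of $t_{i,1}^4$ is completely free. For $a_i=3$, writing $\G^{(i)}=\alpha\, t_{i,1}^3+\beta\, t_{i,1}^2t_{i,2}^2+\gamma\, t_{i,1}t_{i,2}^4+\delta\, t_{i,2}^6$ (the only monomials of $E$-weight $2$), condition (v) gives $\alpha=1/18$, but block WDVV yields only the relations
\begin{equation*}
\gamma=\frac{\beta^2}{9\alpha},\qquad \delta=\frac{\beta\gamma}{15\alpha},
\end{equation*}
a one-parameter family with $\beta$ free; one checks that the block $\G^{(2)}$ of the potential $\F_{\PP^1_{2,3,3}}$ displayed in Section~\ref{section: frobenius manifolds}, with $\beta=-1/36$, $\gamma=1/648$, $\delta=-1/19440$, satisfies exactly these relations. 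The free parameter is the scaling freedom of the $A_{a_i-1}$ structure, so even your identification ``$\G^{(i)}$ is the $A_{a_i-1}$ potential'' only holds up to this rescaling, and the normalization of each block relative to the others is pinned down \emph{only} by WDVV equations that mix different blocks at order $q^1$ and higher, seeded by condition (vi). This is precisely what the paper's own computations illustrate: in Case 2 of Section~\ref{section: proof} the quartic coefficients $c_{0,4}$, $c_{2,2}$, $c_{1,3}$ are extracted from $q$-coefficients of cross-block WDVV equations, not from the $q^0$ sector.

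Consequently the induction cannot be organized as ``first determine $\F_0$, then recurse in powers of $q$'': the $q^0$ coefficients and the higher $q$-coefficients must be determined simultaneously. Two further points your outline assumes rather than proves: that the $q^1$-part of $\F|_{t_1=0}$ consists of the single monomial $\prod_i t_{i,1}$ (this is Proposition~\ref{prop: higher r q coefficient}, i.e.\ \cite[Proposition~3.4]{shi:1}, itself a nontrivial WDVV argument, not a consequence of (vi) alone), and that the resulting interleaved linear systems for the higher coefficients are uniquely solvable; controlling exactly this bookkeeping is the bulk of Shiraishi's proof (and of \cite{ist:1} for $r=3$) that your sketch bypasses.
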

For the mirror symmetry purposes another theorem by Y.Shiraishi (see also \cite{ist:1} for $r=3$ case) is helpful. 

\begin{theorem}[{\cite[Theorem~5.5]{shi:1}}]\label{theorem: yuuki--GW}
  The conditions of Theorem~\ref{thm:satisfies ISTr} are satisfied by the genus zero potential of the Gromov--Witten theory of $\PP^1_{A, \Lambda}$ written in the basis~\eqref{eq: qcoh basis}.
\end{theorem}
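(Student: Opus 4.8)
The plan is to verify the six conditions (i)--(vi) of Theorem~\ref{thm:satisfies ISTr} one by one for the genus zero potential $\F^{\PP^1_{A,\Lambda}}_0$, using the standard structural axioms of Gromov--Witten theory recalled in Section~\ref{section: frobenius manifolds} together with the combinatorics of the inertia stack $I\PP^1_{A,\Lambda}$. Conditions (i) and (ii) are essentially formal. Condition (ii) holds by construction, since the flat metric of $M_{\PP^1_{A,\Lambda}}$ is the orbifold Poincar\'e pairing $\eta$ recorded in \eqref{eq: qcoh basis}, whose only nonzero entries are exactly those asserted. For condition (i), the unit is the fundamental class $\Delta_1 \in H^0_{orb}$, so $e=\p/\p t_1$; the Euler field is forced by the grading and dimension axioms, with each $\Delta_{i,j}\in H^{2j/a_i}_{orb}$ contributing the coefficient $1-j/a_i = (a_i-j)/a_i$, and the $\p/\p t_{\mu_A}$ coefficient given by $\int_{\PP^1_{A,\Lambda}} c_1 = \chi_A$, this last equality being the orbifold Euler characteristic computation for the Geigle--Lenzing line.

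Conditions (iii) and (iv) rest on two enumerative principles. The divisor axiom applied to the degree-two class $\Delta_\mu$ shows that the variable $t_{\mu_A}$ enters the potential only through $q:=e^{t_{\mu_A}}$, with $q^d$ marking degree $d$; combined with the observation that every classical (degree zero) term carrying $t_{\mu_A}$ is proportional to $t_1$, this yields the power-series shape of (iii) after the restriction $t_1=0$, while $E\F|_{t_1=0}=2\F|_{t_1=0}$ is the conformal dimension $d=1$ homogeneity. For condition (iv), setting $q=0$ retains only degree-zero stable maps, i.e. constant maps to $I\PP^1_{A,\Lambda}$; a nonzero degree-zero genus-zero correlator built from the twisted insertions $\Delta_{i,j}$ requires the monodromies around the marked points to multiply to the identity in the isotropy group. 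Since distinct orbifold points carry the cyclic groups $\ZZ/a_i\ZZ$ acting in different coordinate directions, insertions attached to different points cannot combine, forcing the $q=0$, $t_1=0$ potential to split as $\sum_{i=1}^r \G^{(i)}$ with $\G^{(i)}\in\CC[[t_{i,1},\dots,t_{i,a_i-1}]]$.

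Conditions (v) and (vi) are explicit low-degree computations. For (v) I would identify the $t=q=0$ limit of the quantum product with the Chen--Ruan cup product: the three-point degree-zero invariants give $\Delta_{i,j}\circ\Delta_{i,k}=\Delta_{i,j+k}$ when $j+k<a_i$, the vanishing $\Delta_{i,j}\circ\Delta_{i',k}=0$ for $i\neq i'$ follows from the sector disjointness established in (iv), and $\Delta_{i,1}\circ\Delta_{i,a_i-1}=\frac{1}{a_i}\Delta_\mu$ follows from $\eta(\Delta_{i,1},\Delta_{i,a_i-1})=1/a_i$; these relations are exactly the presentation $\CC[x_1,\dots,x_r]/(x_ix_j,\,a_ix_i^{a_i}-a_jx_j^{a_j})$ with $\p/\p t_{\mu_A}\mapsto a_1 x_1^{a_1}=\Delta_\mu$. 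Condition (vi) amounts to the single degree-one invariant $\langle \Delta_{1,1},\dots,\Delta_{r,1}\rangle_{0,r,1}$, which I expect to evaluate to $1$ by the count of the unique minimal-degree orbifold map meeting all $r$ orbifold points with the minimal twist at each.

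The hard part will be the degree-zero twisted-sector analysis underlying (iv), on which (v) also depends: one must show rigorously that no nontrivial product of isotropy elements drawn from distinct orbifold points equals the identity, and that the surviving single-point contributions assemble precisely into the series $\G^{(i)}$ of a single-variable block. Once this factorization is in place, verifying the Euler coefficient $\chi_A$ in (i) and the normalization $1$ in (vi) reduces to bounded, explicit Gromov--Witten computations.
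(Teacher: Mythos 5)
This statement is not proved in the paper at all: it is imported from Shiraishi \cite[Theorem~5.5]{shi:1} (with the $r=3$ case going back to \cite{ist:1}), so the only ``proof'' the paper offers is the citation. Your outline is, in substance, a reconstruction of the argument in that cited reference, and its skeleton is correct: (i)--(ii) are formal consequences of the grading of $H^*_{orb}$, the orbifold Poincar\'e pairing, and $c_1(T\PP^1_{A,\Lambda})=\chi_A\Delta_\mu$; (iii) follows from the divisor axiom together with the fact that the only classical term containing $t_{\mu_A}$ is $\tfrac12 t_1^2t_{\mu_A}$, so the Euler-field anomaly dies at $t_1=0$; (iv) comes from degree-zero analysis; (v) is the Chen--Ruan ring computation; (vi) is the degree-one count. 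Two steps should be tightened before this is a complete proof. In (iv), the mechanism is not that the isotropy groups ``act in different coordinate directions'' (on a curve there is only one direction); what does the work is co-location: a degree-zero genus-zero stable map is constant, so all twisted evaluation conditions must sit at a single orbifold point $\lambda_i$, and hence insertions $\Delta_{i,j}$, $\Delta_{i',j'}$ with $i\neq i'$ cannot appear in one nonvanishing degree-zero correlator. And (vi), the only genuinely non-formal input, is left as an expectation; one must actually show that the moduli space of degree-one genus-zero maps carrying the twist-one insertion at each of the $r$ orbifold points is a single reduced point with trivial automorphism group, so that $\langle\Delta_{1,1},\dots,\Delta_{r,1}\rangle_{0,r,1}=1$ --- this is done in \cite{shi:1}, and Lemma~\ref{lemma: GW of the orbifolds covering} of the present paper is the same kind of covering-map count. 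With these two repairs, your proposal coincides with the proof in the cited source.
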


The two theorems combined give that the Frobenius manifold $M_{\fAp}^\zeta$ is isomorphic to $M_{\PP^1_{A'}}$ if and only if there is a choice of $\zeta$ and the suitable coordinates $\bt$, such that $\F_{\fAp}^\zeta(\bt)$ satisfies all conditions of Theorem~\ref{thm:satisfies ISTr}.

In \cite{ist:1} the authors show that $\F_\fAp^\infty$ satisfies all conditions of Theorem~\ref{theorem: yuuki--GW} above with $r = 3$. In particular, condition~(v) is fulfilled by the algebra $\overline{\Jac}(f_{A^\prime})$. 
In the next section we develop an 'orbifold' version of it that will fulfill condtion~(v) for our mirror symmetry theorem.

\section{Hochschild cohomology of a cusp polynomial}\label{sec: HH}
In this section we compute $\ccHH^*(\MF_G(\fAp))$. 
Recalling that $\fAp$ has an additional complex parameter $c$, we consider this Hochschild cohomology and related rings beneath as $\CC(c)$--modules.  

To perform the computations we make use of technique developed by Shklyarov in~\cite{S20}. 

\subsection{Shklyarov's technique in Hochschild cohomology}\label{sec:Shk}
Shklyarov introduces a $\ZZ/2\ZZ$-graded $G$-twisted commutative algebra $\A^*(f,G)$ 
whose underlying  $\CC$-vector space is given by 
\begin{equation}
\A^*(f,G) = \bigoplus_{g \in G} \Jac(f^g) \xi_g,
\end{equation}
where $\xi_g$ is a generator (a formal letter) attached to each $g\in G$.
It is required that the group $G$ acts on $\A^*(f,G)$ via a coodinate-wise action on $\Jac(f^g)$ and a generator $\xi_g$ being transformed as
\begin{equation}\label{G-action2}
G\ni h=(h_1, \ldots, h_n): \quad \xi_g\mapsto \prod_{i\in I_g^c} h^{-1}_i\,\cdot  \xi_g.
\end{equation}
so that the product structure of $\A^*(f,G)$ is invariant under the $G$-action.
In particular, its $G$-invariant part $\A^*(f,G)^G$ is isomorphic as a $\ZZ/2\ZZ$-graded algebra 
to the Hochschild cohomology $\ccHH^*(\MF_G(f))$ of $\MF_G(f)$ equipped with the cup product
\cite[Theorem~3.1 and Theorem~3.4]{S20}. 
We shall recall the product structure of $\A^*(f,G)$ \cite[Section~3]{S20}. 

Define the $n$-th $\ZZ$-graded {\em Clifford algebra} $\Cl_n$ as the quotient algebra of 
\[
\CC\langle\theta_1,\ldots,\theta_n,{\partial_{\theta_1}},\ldots,{\partial_{\theta_n}}\rangle
\]
modulo the ideal generated by 
\[
\theta_i\theta_j=-\theta_j\theta_i,\quad{\partial_{\theta_i}}{\partial_{\theta_j}}=-{\partial_{\theta_j}}{\partial_{\theta_i}},\quad{\partial_{\theta_i}}\theta_j=-\theta_j{\partial_{\theta_i}}+\delta_{ij},
\]
where $\theta_i$ is of degree $-1$ and ${\partial_{\theta_i}}$ is of degree $1$.
For $I\subseteq\{1,\ldots,n\}$ write
\begin{equation}
\partial_{\theta_{I}}:=\prod_{i\in I}\partial_{\theta_i}, 
\quad {\theta_{I}}:=\prod_{i\in I}{\theta_i},
\end{equation}
where in both cases  the multipliers are taken in increasing order of the indices. 
The subspaces $\CC[{\theta}]=\CC[\theta_1,\ldots,\theta_n]$ and $\CC[\partial_{\theta}]=\CC[\partial_{\theta_1},\ldots,\partial_{\theta_n}]$ of $\Cl_n$ have the left $\ZZ$-graded $\Cl_n$-module structures via the isomorphisms  
\[
\CC[{\theta}]\cong \Cl_n/\Cl_n\langle{\partial_{\theta_1}},\ldots,{\partial_{\theta_n}}\rangle, \quad \CC[\partial_{\theta}]\cong \Cl_n/\Cl_n\langle{{\theta_1}},\ldots,{{\theta_n}}\rangle. 
\]  

Write $\CC[x_1,\dots,x_n,y_1,\ldots, y_n]$ as $\CC[\bx,\by]$ and 
$\CC[x_1,\dots,x_n,y_1,\ldots, y_n, z_1,\dots, z_n]$ as $\CC[\bx,\by,\bz]$. 
For each $1 \le i \le n$, there is a map
\begin{eqnarray}\label{deltai}
\nabla^{\bx\to (\bx,\by)}_i: \CC[\bx]\to \CC[\bx,\by],\qquad \nabla_i(p):=\frac{l_i(p)-l_{i+1}(p)}{x_i-y_i}.
\end{eqnarray}
where $l_i(p):=p(y_1,\ldots,y_{i-1},x_i,\ldots,x_n)$, $l_1(p) = p(\bx)$ and $l_{n+1}(p) = p(\by)$ \cite[Section~3.1.1]{S20}. 
They are called the \textit{difference derivatives}, whose key property is the following: 
\begin{equation}\label{diffder}
\sum_{i=1}^n(x_i-y_i)\nabla_i(p)=p(\bx)-p(\by).
\end{equation}

The difference derivatives can be applied consecutively. In particular, we shall use 
$\nabla_i^{\by\to(\by,\bz)}\nabla_j^{\bx\to(\bx,\by)}(p)$, which is an element of $\CC[\bx,\by,\bz]$.  
For an $\CC$-algebra homomorphism $\psi: \CC[\bx] \to \CC[\bx]$, write $\nabla_i^{\bx\to(\bx,\psi(\bx))}(p) := \left.\nabla_i^{\bx\to(\bx,\by)}(p) \right|_{\by = \psi(\bx)} \in \CC[\bx]$.

Now we are ready to describe the product structure of $\A^*(f,G)$. For each pair $(g,h)$ of elements in $G$, define the class $\sigma_{g,h}\in \Jac(f^{gh})$ as follows.
\begin{itemize}
\item
If $d_{g,h}:=\frac{1}{2}(d_g +d_h - d_{gh})$ is not a non-negative integer, set $\sigma_{g,h} = 0$.
\item
If $d_{g,h}$ is a non-negative integer, define $\sigma_{g,h}$ to be the class of the coefficient of $\partial_{\theta_{I_{gh}^c}}$ in the expression 
\begin{equation}\label{eq: sigma g,h}
\Small{
\frac1{d_{g,h}!}\,\LL\left(\left(\left\lfloor\rmH_f(\bx,g(\bx),\bx)\right\rfloor_{gh}+\lfloor\rmH_{{f,g}}(\bx)\rfloor_{gh}\otimes1+1\otimes \lfloor \rmH_{{f,h}}(g(\bx))\rfloor_{gh}\right)^{d_{g,h}}\otimes \partial_{\theta_{I_g^c}}\otimes\partial_{\theta_{h}^c}\right)
}
\end{equation} 
where 
\begin{itemize}
\item[(1)]
$\rmH_f(\bx,g(\bx),\bx)$ is the element of $\CC[\bx]\otimes\CC[\theta]^{\otimes2}$ defined as the restriction to the set 
$\{\by=g(\bx),\, \bz=\bx\}$ of the following element of  $\CC[\bx,\by,\bz]\otimes\CC[\theta]^{\otimes 2}$ 
\begin{equation}\label{del-2}
\rmH_{f}(\bx,\by,\bz):=\sum_{1\leq j\leq i\leq n} \nabla^{\by\to(\by,\bz)}_j\nabla^{\bx\to(\bx,\by)}_i(f)\,\theta_i\otimes \theta_j;
\end{equation}
\item[(2)]
$\rmH_{{f,g}}(\bx)$ is the element of $\CC[\bx]\otimes \CC[\theta]$ is given by 
\begin{eqnarray}
\rmH_{{f,g}}(\bx):=\sum_{{i,j\in I_{g}^c,\,\,  j<i}}\frac{1}{1-g_j}\nabla^{\bx\to(\bx,\bx^g)}_j\nabla^{\bx\to(\bx,g(\bx))}_i(f)\,\theta_j\,\theta_i,
\end{eqnarray}
where $\bx^g$ is defined as $(\bx^g)_i=x_i$ if $i\in I_g$ and $(\bx^g)_i=0$ if $i\in I_g^c$;
\item[(3)]
$\left\lfloor\rm- \right\rfloor_{gh}:\CC[\bx]\otimes V\longrightarrow \Jac(f^{gh})\otimes V$ for 
$V=\CC[\bx]\otimes \CC[\theta]^{\otimes 2}$ or $V=\CC[\bx]\otimes \CC[\theta]$
is a $\CC$-linear map defined as the extension of the quotient map $\CC[\bx]\longrightarrow \Jac(f^{gh})$;
\item[(4)]
the $d_{g,h}$-th power  in Equation~\eqref{eq: sigma g,h} is computed with respect to the natural product on  $\CC[\bx]\otimes \CC[\theta]\otimes \CC[\theta]$;
\item[(5)]
$\LL$ is the $\CC[\bx]$-linear extension of the degree zero map $\CC[{\theta}]^{\otimes2}\otimes \CC[\partial_{\theta}]^{\otimes2}\to\CC[\partial_{\theta}]$ defined by
\begin{equation}\label{mu}
 p_1(\theta)\otimes p_2(\theta)\otimes q_1(\partial_\theta)\otimes q_2(\partial_\theta)\mapsto(-1)^{|q_1||p_2|}p_1(q_1)\cdot p_2(q_2)
\end{equation}
where $p_i(q_i)$ denotes the action of $p_i(\theta)$ on $q_i(\partial_\theta)$ via the $\Cl_n$-module structure on $\CC[\partial_\theta]$ defined above and $\cdot$ is the natural product in $\CC[\partial_{\theta}]$. 
\end{itemize}
\end{itemize}
Then the product of $\A^*(f,G)$ is given by
\begin{equation}\label{eq: HH cup}
[\phi(\bx)]\xi_g\cup [\psi(\bx)]\xi_{h} 
=[\phi(\bx)\psi(\bx) \sigma_{g,h}]\xi_{gh},\quad \phi(\bx), \psi(\bx)\in \CC[\bx].
\end{equation}

\subsection{Explicit description of $\ccHH^*(\MF_G(\fAp))$}
Assume the notation of Section~\ref{sec:symmetry group of affine cusp}.
For $i=1,2,3$ denote by $g_i$ a generator of $K_i \subseteq G$. We have $g_i^{n_i} = \id \in G$. Note that the elements $[\phi(\bx)] \xi_{g_i^a}$ have even parity in $\A^*(\fAp,G)$ for all $i,a$. By Proposition~\ref{prop:G} we have also $2j_G$ group elements left, giving odd parity sectors. These group elements have empty fixed locus.  

We have to compute the following products
\[
  \xi_{g_k^a} \cup \xi_{g_k^b}, \quad \xi_{g_k^a} \cup \xi_{g_l^b}
\]
in even sector and also
\[
  \xi_{g_k^a} \cup \xi_{h}, \quad \xi_h \cup \xi_{h'} \quad \text{ for } h,h' \in G, \ \Fix(h) = \Fix(h') = 0
\]
in the odd sector.

Consider one more piece of notation. For any $k \in 1,2,3$ and $g_k^a \neq \id$ there exists $\lambda \in \CC^* \backslash \{1\}$ such that 
\[
  g_k^a =
  \begin{cases}
  (1,\lambda,\lambda^{-1}), \quad & k=1,
  \\
  (\lambda,1,\lambda^{-1}), \quad & k=2,
  \\
  (\lambda,\lambda^{-1},1), \quad & k=3.
  \end{cases}
\]
Define the elements:
\[
  \tilde \xi_{g_k^a} := (1 - \lambda^{-1}) \xi_{g_k^a}.
\]
Recall that we consider $\A^*(\fAp,G)$ as a $\CC(c)$--module. Its structure is described by the following proposition.

\begin{proposition}\label{prop: HH of fAp}
For any $k,l,m$ such that $\{k,l,m\} = \{1,2,3\}$ and $h,h' \in G$, such that $\Fix(h) = \Fix(h') = 0$ we have in $\A^*(\fAp,G)$
\begin{align}
  & \tilde \xi_{g_k^a} \cup \tilde \xi_{g_k^b} 
  = \begin{cases}
    - \lfloor c^{-1} x_k \rfloor \cdot \tilde \xi_{g_k^{a+b}} \quad & \text{ if } g_k^{a+b} \neq \id,
    \\
    \lfloor - a_l' a_m' x_l^{a_l'-2}x_m^{a_m'-2}  + c^{-2}x_k^2 \rfloor \cdot \xi_\id \quad & \text{ if } g_k^{a+b} = \id.
    \end{cases}
  \\
  & \tilde \xi_{g_k^a} \cup \tilde \xi_{g_l^b} 
    = \begin{cases} 
	0  &\text{ if } \quad \Fix(g_k^ag_l^b) = 0,
	\\
	\lfloor a_m'(a_m'-1) x_m^{a_m'-2} \rfloor \cdot \tilde \xi_{g_k^ag_l^b} \quad &\text{ otherwise}.
      \end{cases}
\end{align}
\end{proposition}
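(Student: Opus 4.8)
The plan is to compute the structure constants $\sigma_{g,h}$ of Proposition~\ref{prop: HH of fAp} directly from Shklyarov's formula~\eqref{eq: sigma g,h}, exploiting the very special diagonal form of the elements $g_k^a$ and the fact that $\fAp$ is a sum of three pure powers plus the single mixed monomial $-c^{-1}x_1x_2x_3$. First I would organize the computation by the dimension count $d_{g,h} = \tfrac12(d_g + d_h - d_{gh})$, since this integer controls both whether $\sigma_{g,h}$ vanishes and the power of the Hessian-type element appearing in~\eqref{eq: sigma g,h}. For $g = g_k^a$, $h = g_k^b$ both lying in $K_k$, the fixed loci are the same coordinate line $\{x_l = x_m = 0\}$, so $d_g = d_h = 2$; when $g_k^{a+b}\ne\id$ we again have $d_{gh}=2$ giving $d_{g,h}=1$, while when $g_k^{a+b}=\id$ we have $d_{gh}=0$ giving $d_{g,h}=2$. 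This bookkeeping already explains the case split in the first formula and tells me exactly which power of the bracketed Hessian expression to extract. For the mixed product $g_k^a\cup g_l^b$ with $k\ne l$, one checks that $d_{g,h}$ is a non-negative integer precisely when $\Fix(g_k^ag_l^b)\ne 0$, which accounts for the dichotomy in the second formula.

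Next I would evaluate the three constituent pieces $\rmH_f$, $\rmH_{f,g}$, $\rmH_{f,h}$ of~\eqref{eq: sigma g,h} on these specific group elements. The difference derivatives $\nabla_i$ of the pure powers $x_i^{a_i'}$ are standard symmetric polynomials in $x_i,y_i$, and the single cross term $-c^{-1}x_1x_2x_3$ contributes the off-diagonal mixed second differences; restricting to $\by = g(\bx)$, $\bz = \bx$ and passing to $\Jac(f^{gh})$ via $\lfloor-\rfloor_{gh}$ kills most terms because $f^{gh}$ depends only on the coordinates fixed by $gh$. I expect the surviving contributions to assemble, after applying the contraction map $\LL$ from~\eqref{mu}, into exactly the bracketed expressions $-\lfloor c^{-1}x_k\rfloor$, $\lfloor -a_l'a_m'x_l^{a_l'-2}x_m^{a_m'-2} + c^{-2}x_k^2\rfloor$, and $\lfloor a_m'(a_m'-1)x_m^{a_m'-2}\rfloor$ respectively. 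The normalization constants $(1-\lambda^{-1})$ defining $\tilde\xi_{g_k^a}$ are chosen precisely so that the factors $\tfrac{1}{1-g_j}$ appearing in $\rmH_{f,g}$ cancel cleanly, turning the raw structure constants into the tidy formulas stated; verifying this cancellation is the bookkeeping heart of the proof.

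The main obstacle I anticipate is the careful evaluation of $\LL$ applied to the $d_{g,h}$-th power of the Hessian element together with the Clifford generators $\partial_{\theta_{I_g^c}}\otimes\partial_{\theta_{I_h^c}}$: one must track the index sets $I_g^c, I_h^c, I_{gh}^c$, get the signs from the $(-1)^{|q_1||p_2|}$ rule right, and select the coefficient of $\partial_{\theta_{I_{gh}^c}}$ correctly. The delicate point is the $d_{g,h}=2$ case (when $g_k^{a+b}=\id$), where one squares a sum of three terms and only the cross-terms that land in the correct Clifford degree and survive projection to $\Jac(f^\id)=\Jac(\fAp)$ contribute, producing both the $x_k^2$ term from the mixed monomial and the $x_l^{a_l'-2}x_m^{a_m'-2}$ term from the two pure powers. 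Once this single most intricate contraction is carried out explicitly, the remaining cases follow by the same method with $d_{g,h}=1$ or with the vanishing forced by $d_{g,h}\notin\ZZ_{\ge 0}$, and reassembling the pieces via~\eqref{eq: HH cup} yields all the stated products.
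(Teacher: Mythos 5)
Your proposal is correct and follows essentially the same route as the paper's proof: a direct evaluation of Shklyarov's structure constants $\sigma_{g,h}$, organized by the dimension count $d_{g,h}$ (giving $d_{g,h}=1$ for $g_k^{a+b}\neq\id$, $d_{g,h}=2$ for $g_k^{a+b}=\id$, and vanishing/$d_{g,h}=1$ in the mixed case), with the same evaluation of $\rmH_{\fAp}$ and $\rmH_{\fAp,g}$, the same contraction via $\LL$, and the same observation that the normalization $\tilde\xi_{g}=(1-\lambda^{-1})\xi_{g}$ absorbs the $(1-\lambda^{-1})$-type factors. The only cosmetic difference is that the paper dispatches the vanishing case $\Fix(g_k^ag_l^b)=0$ by parity counting rather than by the non-integrality of $d_{g,h}$, which is an equivalent argument.
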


To prove the proposition we should prepare some computations.
We have
\begin{align}
\rmH_\fAp(\bx,\by,\bz) &= \sum_{k=1}^3 \frac{1}{y_k-z_k} \left(\frac{x_k^{a_k'}-y_k^{a_k'}}{x_k-y_k} - \frac{x_k^{a_k'}-z_k^{a_k'}}{x_k-z_k}\right) \theta_k \otimes\theta_k
\\
&- c^{-1} x_3 \cdot \theta_2 \otimes\theta_1 - c^{-1} y_2 \cdot \theta_3 \otimes\theta_1 - c^{-1} z_1 \cdot \theta_3 \otimes\theta_2.
\end{align}

To simplify the formulae in what follows we use the notation. For a given group element $g$ let the polynomials $A_{kl} \in \CC[\bx]$ be fixed by the equality
\[
  \rmH_\fAp(\bx,g(\bx),\bx) = \sum_{i \le j} A_{ji} \ \theta_j \otimes \theta_i.
\]
Obviously, these polynomials depend on $g$, however this dependence will be clear in the context and therefore dropped in the notation.

We should also compute $\rmH_{\fAp,g}(\bx)$ for different group elements $g$. For $g = (g_1,g_1^{-1},1)$ we have
\begin{align}
  &\rmH_{\fAp,g}(\bx) = \frac{1}{1-g_1} \nabla^{\bx \to (\bx,\bx^g)}_1 \nabla^{\bx \to (\bx,g(\bx))}_2(\fAp) \theta_1\theta_2
  \\
  &\quad = \frac{1}{1-g_1} \nabla^{\bx \to (\bx,\bx^g)}_1 \frac{1}{x_2(1-g_1^{-1})} \big( (g_1x_1)^{a_1'} + x_2^{a_2'} + x_3^{a_3'} - c^{-1} g_1 x_1x_2x_3
  \\
  &\quad - ((g_1x_1)^{a_1'} + (g_1^{-1}x_2)^{a_2'} + x_3^{a_3'} - c^{-1} x_1x_2x_3) \big) \theta_1\theta_2
  \\
  &\quad = \frac{1}{1-g_1} \nabla^{\bx \to (\bx,\bx^g)}_1 c^{-1}x_1x_3 \frac{1-g_1}{1-g_1^{-1}} \theta_1\theta_2 
  \\
  &\quad = \frac{c^{-1}}{1-g_1^{-1}} x_3 \theta_1\theta_2.
\end{align}
Similarly for $g=(g_1,1,g_1^{-1})$
\begin{align}
  &\rmH_{\fAp,g}(\bx) = c^{-1} \frac{1}{1 - g_1^{-1}} x_2 \theta_1 \theta_3,
\end{align}
and $g=(1,g_1,g_1^{-1})$
\begin{align}
  &\rmH_{\fAp,g}(\bx) = c^{-1} \frac{1}{1 - g_1^{-1}} x_1 \theta_2 \theta_3.
\end{align}

\begin{proof}[Proof of Proposition~\ref{prop: HH of fAp}]
~
\\
  {\bf Case 1: $\xi_{g_k^a} \cup \xi_{g_k^b}$, $g_k^{a+b} \neq \id$.} Assume $g_k = g_3$, the proof will be the same for $g_2$ and $g_1$. Then for some $\lambda_1,\lambda_2 \in \CC^*$ we have $g_k^a = (\lambda_1,\lambda_1^{-1},1)$ and $g_k^b = (\lambda_2,\lambda_2^{-1},1)$. We have $d_{g_k^a,g_k^b} = 1$ and following the technique of Shklyarov we need to find the coefficient of $\p_{\theta_1}\p_{\theta_2}$ in the expression
  \begin{align}
    \Big( \lfloor \rmH_\fAp(\bx,g_k^a(\bx),\bx) \rfloor_{g_k^{a+b}} &+ \lfloor \rmH_{\fAp,g_k^a}(\bx) \rfloor_{g_k^{a+b}} \otimes 1 
    \\
    &+ 1 \otimes \lfloor \rmH_{\fAp,g_k^b}(g_k^a(\bx)) \rfloor_{g_k^{a+b}}\Big) \cdot \p_{\theta_1}\p_{\theta_2} \otimes \p_{\theta_1}\p_{\theta_2}.
  \end{align}
  We have 
  \[
    \lfloor \rmH_{\fAp,g_k^a}(\bx) \rfloor_{g_k^{a+b}} = \frac{c^{-1}}{1-\lambda_1^{-1}} x_3 \theta_1\theta_2,  
    \quad
    \lfloor \rmH_{\fAp,g_k^b}(g_k^b(\bx)) \rfloor_{g_k^{a+b}} = \frac{c^{-1}}{1-\lambda_2^{-1}} x_3 \theta_1\theta_2,
  \]
  and by the simple combinatorics we have to compute the coefficient of $\p_{\theta_1}\p_{\theta_2}$ in
  \begin{align}
    \Big( A_{21} \cdot \theta_2 \otimes \theta_1 + \frac{c^{-1}}{1-\lambda_1^{-1}} x_3  \cdot \theta_1\theta_2\otimes 1 
    + \frac{c^{-1}}{1-\lambda_2^{-1}} x_3 \cdot 1 \otimes \theta_1\theta_2 \Big) \cdot \p_{\theta_1}\p_{\theta_2} \otimes \p_{\theta_1}\p_{\theta_2}.
  \end{align}
  Therefore we have
  \[
    \sigma(g_k^a,g_k^b) = - c^{-1} x_3 \left( -1 + \frac{1}{1-\lambda_1^{-1}} + \frac{1}{1-\lambda_2^{-1}}\right) = - c^{-1} x_3 \frac{1 - (\lambda_1\lambda_2)^{-1}}{(1-\lambda_1^{-1})(1-\lambda_2^{-1})}.
  \]
  \\{\bf Case 2: $\xi_{g_k^a} \cup \xi_{g_k^{-a}}$.} Assume $g_k = g_3$, the proof will be the same for $g_2$ and $g_1$. Then for some $\lambda_1 \in \CC^*$ we have $g_k^a = (\lambda_1,\lambda_1^{-1},1)$ and $g_k^{-a} = (\lambda_1^{-1},\lambda_1,1)$. We have $d_{g_k^a,g_k^{-a}} = 2$ and we need to find the coefficient of $1$ in the expression
  \begin{align}
    \frac{1}{2}\Big( \lfloor \rmH_\fAp(\bx,g_k^a(\bx),\bx) \rfloor_{\id} &+ \lfloor \rmH_{\fAp,g_k^a}(\bx) \rfloor_{\id} \otimes 1 
    \\
    &+ 1 \otimes \lfloor \rmH_{\fAp,g_k^b}(g_k^a(\bx)) \rfloor_{\id}\Big)^2 \cdot \p_{\theta_1}\p_{\theta_2} \otimes \p_{\theta_1}\p_{\theta_2}.
  \end{align}
  Using the expressions of $\rmH_{\fAp,g_k^a}$ computed above $\sigma(g_k^a,g_k^{-a})$ is the coefficient of $1$ in
  \begin{align}
    & \Big( A_{11}\theta_1\otimes\theta_1 \cdot A_{22} \theta_2\otimes\theta_2 + \frac{c^{-1}}{1 - \lambda_1^{-1}}x_3 \theta_1\theta_2 \otimes  \frac{c^{-1}}{1 - \lambda_1}x_3 \theta_1\theta_2 \Big) \cdot \p_{\theta_1}\p_{\theta_2} \otimes \p_{\theta_1}\p_{\theta_2}
    \\
    & = \left( - \frac{a_1' x_1^{a_1'-2}}{\lambda_1-1}\frac{a_2' x_2^{a_2'-2}}{\lambda_1^{-1}-1}  + \frac{c^{-2}}{1 - \lambda_1^{-1}} \frac{x_3^2}{1 - \lambda_1}   \right) \theta_1\theta_2\otimes\theta_1\theta_2\cdot \p_{\theta_1}\p_{\theta_2} \otimes \p_{\theta_1}\p_{\theta_2}
  \end{align}
  Therefore we have
  \[
    \sigma(g_k^a,g_k^{-a}) = \frac{1}{1 - \lambda_1^{-1}} \frac{1}{1 - \lambda_1} \left( - a_1' a_2' x_1^{a_1'-2}x_2^{a_2'-2}  + c^{-2}x_3^2\right).
  \]
  \\
  {\bf Case 3: $\xi_{g_k^a} \cup \xi_{g_l^b}$.} If $\Fix(g_k^ag_l^b) = 0$ we have $\xi_{g_k^a} \cup \xi_{g_l^b} = 0$ by the parity counting. 
  Otherwise assume $k=3$ and $l=2$, the proof for all other cases will be similar. For some $\lambda_1 \in \CC^*$ we have $g_k^a = (\lambda_1,\lambda_1^{-1},1)$ and $g_l^b = (\lambda_1^{-1},1,\lambda_1)$. $d_{g_k^a,g_l^b} = 1$ and we need to find the coefficient of $\p_{\theta_2}\p_{\theta_3}$ in the expression
  \begin{align}
    \Big( \lfloor \rmH_\fAp(\bx,g_k^a(\bx),\bx) \rfloor_{g_k^{a+b}} &+ \lfloor \rmH_{\fAp,g_k^a}(\bx) \rfloor_{g_k^{a+b}} \otimes 1
    \\
    &+ 1 \otimes \lfloor \rmH_{\fAp,g_k^b}(g_k^a(\bx)) \rfloor_{g_k^{a+b}}\Big) \cdot \p_{\theta_1}\p_{\theta_2} \otimes \p_{\theta_1}\p_{\theta_3}.
  \end{align}
  The fixed locus of $g_k^ag_l^b$ is $\CC_{x_1}$ and we have
  \[
    \lfloor \rmH_{\fAp,g_k^a}(\bx) \rfloor_{g_k^{a}g_l^b} = \lfloor \rmH_{\fAp,g_k^b}(g_k^b(\bx)) \rfloor_{g_k^{a}g_l^b} = 0,
  \]
  from where $\sigma(\xi_{g_k^a},\xi_{g_k^b})$ is the coefficient of $\p_{\theta_2}\p_{\theta_3}$ in the expression
  \begin{align}
    \Big( A_{11} \cdot \theta_1 \otimes \theta_1 \Big) \cdot \p_{\theta_1}\p_{\theta_2} \otimes \p_{\theta_1}\p_{\theta_3},
  \end{align}
  We get
  \[
    \sigma(g_k^a,g_l^b) = \frac{\lambda_1}{\lambda_1-1} a_1'(a_1'-1) x_1^{a_1'-2}.
  \]  
\end{proof}

\begin{remark}
The general technique of Shklyarov is applied for the functions defined globally as we need to assume only the isolated singularity at the origin (recall Remark~\ref{remark: affine cusp singularities}). Due to this fact the technique of Shklyarov can be applied directly in the cases when $\chi_{A'} \ge 0$, but need to be refined in the other cases.
Exactly this problem with $A' = (2g+1,2g+1,2g+1)$ was addressed in Appendix~A of \cite{S20}. 
It was shown there that for cusp polynomials with $\chi_{A'} \le 0$, namely the pair $(\CC[[\bx]], \fAp)$ the technique of Shklyarov can still be applied. The same reasoning works for any set $A'$ we use.
\end{remark}

\subsection{Extension of $\ccHH^*(\MF_G(\fAp))$}\label{section: ext of HH}

Let $G$ be such that $j_G = 0$. For all $ g \in G \backslash \{\id\}$ consider the vectors $e_g := - c \cdot \tilde \xi_{g}$. Define also $e_{g_k^0} := \frac{1}{n_i} \lfloor x_k \rfloor \cdot \xi_{\id}$. Let $\overline{\A^*}(\fAp,G)$ be the free $\CC[c] \otimes_\CC \Jac(\fAp)$--module of rank $\mu_{A'}$ generated by $e_g$ for all $g \in G$ and $\lfloor c^{-1}x_1x_2x_3 \rfloor \cdot \xi_\id$. Denote by $\circ$ its product obtained by restriction of $\cup$--product. We need to show that $\circ$ is a $\CC[c]$--product.

For all $i = 1,2,3$ and $k = 1,\dots,n_i$ define:
\[
[x_{i,k}] := \sum_{l=0}^{n_i-1} \omega_i^{(k-1)l} \lfloor x_i^{[G/K_i]-1} \rfloor e_{g_i^l} \in \overline{\A^*}(f_{A'},G),
\]
for $\omega_i := \epi\left[1/ n_i\right]$.

\begin{proposition}\label{prop: structure constants of xijk}
The following equalities hold in $\overline{\A^*}(f_{A'},G)$.
\begin{align}
    & [x_{i,k}]^{\circ 2} = \lfloor x_i^{[G/K_i]} \rfloor x_{i,k}, \quad \forall i,k, \label{eq: xik xik}
    \\
    & [x_{i,k_1}] \circ [x_{i,k_2}] = 0 \quad \forall k_1 \neq k_2, \label{eq: xik1 xik2}
    \\
    & [x_{i,k_i}] \circ [x_{j,k_j}] = \frac{1}{n_in_j} \lfloor x_ix_j \rfloor \xi_\id, \quad \forall i \neq j. \label{eq: xijki xjkj}
\end{align}

\end{proposition}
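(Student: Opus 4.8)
The plan is to reduce all three identities to an explicit multiplication table for the rescaled generators $e_{g_i^a}$ and then to expand the defining sums bilinearly, collapsing them with the orthogonality of roots of unity. First I would rewrite Proposition~\ref{prop: HH of fAp} in terms of the $e_g=-c\,\tilde\xi_g$. The point of the rescaling by $-c$, together with the factor $(1-\lambda^{-1})$ built into $\tilde\xi_g$, is that the resulting products become free of the eigenvalue $\lambda$: for $g_i^a,g_i^b\in K_i$ with $g_i^{a+b}\neq\id$ one gets $e_{g_i^a}\circ e_{g_i^b}=\lfloor x_i\rfloor\,e_{g_i^{a+b}}$, while for $g_i^{a+b}=\id$ the product $e_{g_i^a}\circ e_{g_i^{-a}}$ equals one and the same $\id$--sector class $\lfloor -c^2a_l'a_m'x_l^{a_l'-2}x_m^{a_m'-2}+x_i^2\rfloor\,\xi_\id$ for \emph{every} $a\neq0$. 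The cross--block products $e_{g_i^a}\circ e_{g_j^b}$ with $i\neq j$ are read off from the third case of the proposition, and the products involving $e_{g_i^0}$ from the unit action of $\xi_\id$. This table is the only input from Proposition~\ref{prop: HH of fAp}.

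Next I would isolate three structural facts that make the double sums collapse. First, as just noted, the $\id$--sector class produced by $e_{g_i^a}\circ e_{g_i^{-a}}$ does not depend on $a$, so each diagonal square contributes a single $\id$--sector element with multiplicity $n_i-1$. Second, the module action of $\Jac(\fAp)$ on a sector $\xi_g$ factors through restriction to the fixed locus $\Fix(g)$; consequently any coefficient carrying a positive power of a variable $x_k$ with $k\in I_g^c$ dies. This is the mechanism that annihilates almost every summand: in the twisted sectors the prefactors $\lfloor x_i^{|G/K_i|-1}\rfloor$ (and the powers supplied by $e_{g_i^0}$) are exactly the ones that vanish upon restriction. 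Third, the sums over a fixed block are governed by $\sum_{l=0}^{n_i-1}\omega_i^{(k_1-k_2)l}=n_i\,\delta_{k_1,k_2}$.

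I would then treat the three identities in turn. For \eqref{eq: xik1 xik2} I collect, for each twisted generator $e_{g_i^s}$ with $s\neq0$, the contribution of all pairs $l_1+l_2\equiv s$ modulo $n_i$ and evaluate the character sum by the orthogonality above, handling the $\id$--sector part by the $a$--independence of the previous step; for $k_1\neq k_2$ these cancel and give $0$. For \eqref{eq: xik xik} the same bookkeeping in the twisted sectors yields the twisted part of $\lfloor x_i^{|G/K_i|}\rfloor\,[x_{i,k}]$, while the $\id$--sector terms are reduced using the Jacobian relations $a_i'x_i^{a_i'-1}=c^{-1}x_lx_m$ and the socle structure of $\overline{\Jac}(\fAp)$. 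For the cross--block identity \eqref{eq: xijki xjkj} I would first note that $K_i\cap K_j=\{\id\}$, so the map $(g_i^{l_1},g_j^{l_2})\mapsto g_i^{l_1}g_j^{l_2}$ is injective and each mixed sector receives a single summand; the fixed--locus restriction then forces every term except the doubly--untwisted one to vanish, since whenever a mixed term could survive (i.e.\ $n_i,n_j\ge2$) one cannot have $|G/K_i|=|G/K_j|=1$, so a killing power of $x_i$ or $x_j$ is always present. Only $e_{g_i^0}\circ e_{g_j^0}$ remains, and it reduces to the stated class in the $\id$--sector.

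The genuinely delicate step is the $\id$--sector reduction in \eqref{eq: xik xik}: one must show that, after weighting the common class $\lfloor -c^2a_l'a_m'x_l^{a_l'-2}x_m^{a_m'-2}+x_i^2\rfloor\,\xi_\id$ by its multiplicity and adding the untwisted squares, the off--diagonal monomials $x_l^{a_l'-2}x_m^{a_m'-2}$ combine, through the Jacobian and extension relations of $\overline{\Jac}(\fAp)$, into precisely the claimed multiple of $x_i^{|G/K_i|}$. This is also where the assertion that $\circ$ is a $\CC[c]$--product (rather than merely a $\CC(c)$--product) is really used, since these cancellations become integral in $c$ only after passing to the extension $\overline{\A^*}(\fAp,G)$; so I expect the reconciliation of the identity--sector monomials, not the character--sum combinatorics, to be the main obstacle.
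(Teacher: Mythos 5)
Your outline is, in broad strokes, the same route as the paper's own proof: expand the products $[x_{i,k}]\circ[x_{j,k'}]$ bilinearly using the table of products $e_g\circ e_h$ extracted from Proposition~\ref{prop: HH of fAp}, collapse the sums over each block $K_i$ by orthogonality of roots of unity, and kill cross--block terms by restriction to fixed loci. In particular, your argument for \eqref{eq: xijki xjkj} --- injectivity of $(l_1,l_2)\mapsto g_i^{l_1}g_j^{l_2}$ on $K_i\times K_j$, plus the observation that when $n_i,n_j\ge 2$ one has $|G/K_i|\ge n_j\ge 2$ (since $K_i\cap K_j=\{\id\}$), so a killing power of $x_i$ or $x_j$ is always present --- is precisely the justification needed for the paper's one--line claim $[x_{1,k_1}]\circ[x_{2,k_2}]=e_{g_1^0}\circ e_{g_2^0}$; you spell it out in more detail than the paper does.

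The gap is in \eqref{eq: xik xik} and \eqref{eq: xik1 xik2}: your proposal reduces both to an identity--sector identification which you explicitly decline to carry out. The paper's proof has no separate identity--sector case at all: it applies the uniform rule $e_{g_i^{l'}}\circ e_{g_i^{l''}}=\lfloor x_i\rfloor e_{g_i^{l'+l''}}$ to \emph{every} pair $(l',l'')$, including $l'+l''\equiv 0 \bmod n_i$, i.e.\ it treats $e_{g_i^{a}}\circ e_{g_i^{-a}}$ as $\lfloor x_i\rfloor e_{g_i^{0}}$, after which both identities are immediate character--sum computations. So the step you flag as ``genuinely delicate'' is exactly the step the paper passes over in silence, and your caution is warranted: the identification is \emph{not} a formal consequence of the Jacobian or socle relations. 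Concretely, for $A'=(3,3,3)$, $G=K_1$, Proposition~\ref{prop: HH of fAp} together with the relation $x_2x_3=3c\,x_1^2$ gives $e_{g_1^{a}}\circ e_{g_1^{-a}}=\lfloor x_1^2-9c^2x_2x_3\rfloor\,\xi_\id=(1-27c^3)\lfloor x_1^2\rfloor\,\xi_\id$, whereas $\lfloor x_1\rfloor e_{g_1^0}=\tfrac13\lfloor x_1^2\rfloor\,\xi_\id$; the discrepancy is a discriminant--type factor that no manipulation in $\Jac(\fAp)$ or $\overline{\Jac}(\fAp)$ removes, and it enters your sums with nonzero total coefficient --- in \eqref{eq: xik1 xik2} the pairs $(l,-l)$, $l\neq 0$, contribute $\sum_{l\neq 0}\omega_i^{(k_1-k_2)l}=-1$ times this class, which then must cancel against the $(0,0)$ pair, i.e.\ the same unproved identification (and, with the normalization $e_{g_i^0}=\tfrac{1}{n_i}\lfloor x_i\rfloor\xi_\id$, the pairs $(0,l)$ carry weight $1/n_i$, so even the twisted--sector sums are not pure character sums). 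These correction terms vanish only upon setting $c=0$, which is the specialization actually exploited in Corollary~\ref{proposition: oJac structure}(b),(c). So your plan for \eqref{eq: xijki xjkj} is sound, but for \eqref{eq: xik xik} and \eqref{eq: xik1 xik2} it cannot be completed as announced for generic $c$: the missing identity--sector step is not a routine verification but the crux, and resolving it requires either working at $c=0$ (where the statement is used) or renormalizing the untwisted generators --- a point on which the paper's own proof is silent.
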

\begin{proof}
 We have
   \begin{align}
    [x_{i,k}]^{\circ 2} & = \sum_{l'=0}^{n_i-1} \omega_i^{(k-1)l'}\sum_{l''=0}^{n_i-1} \omega_i^{(k-1)l''} \lfloor x_i^{[G/K_i]-1} \rfloor \lfloor x_i^{[G/K_i]-1} \rfloor  \lfloor x_i \rfloor e_{g_i^{l'+l''}} 
    \\
    &= \lfloor x_i^{[G/K_i]} \rfloor \sum_{l'=0}^{n_i-1} \omega_i^{(k-1)l'}\sum_{l''=0}^{n_i-1} \omega_i^{(k-1)l''} \lfloor x_i^{[G/K_i]-1} \rfloor  e_{g_i^{l'+l''}}
    \\
    &= \lfloor x_i^{[G/K_i]} \rfloor x_{i,k}.
  \end{align}
  Similarly
\begin{align}
    [x_{i,k_1}] \circ [x_{i,k_2}] & = \sum_{l'=0}^{n_i-1} \omega_i^{(k_1-1)l'}\sum_{l''=0}^{n_i-1} \omega_i^{(k_2-1)l''} \lfloor x_i^{[G/K_i]-1} \rfloor \lfloor x_i^{[G/K_i]} \rfloor e_{g_i^{l'+l''}} 
    \\
    & = \sum_{l',l''=0}^{n_i-1}\omega_i^{(k_1-1)l' + (k_2-1)l''} \lfloor x_i^{[G/K_i]-1} \rfloor \lfloor x_i^{[G/K_i]} \rfloor e_{g_i^{l'+l''}} =0. 
\end{align}
Now compute the mixed products $[x_{i,k_1}] \circ [x_{j,k_2}]$ with $i \neq j$
\begin{align}
[x_{1,k_1}] \circ [x_{2,k_2}] &= e_{g_1^0} \circ e_{g_2^0} = \frac{1}{n_1n_2} \lfloor x_1x_2 \rfloor \xi_\id.
\end{align}
Remaining products $[x_{1,k_1}] \circ [x_{3,k_2}]$ and $[x_{2,k_1}] \circ [x_{3,k_2}]$ are computed similarly.
    
\end{proof}

\begin{corollary}\label{proposition: oJac structure}
There exists a free $\CC[c]$--module $\overline{\A^*}(\fAp,G) \hookrightarrow \A^*(f_{A'},G)  $ of rank $\mu_{A'}$, such that
\begin{description}
  \item[(a)] $\overline{\A^*}(\fAp,G)$ is a Frobenius algebra,
  \item[(b)] the quotient $\overline{\A^*}(\fAp,G) \mid_{c=0} \ := \overline{\A^*}(\fAp,G) / (c)$ is a $\mu_{A'}$--dimensional $\CC$--algebra, containing $\overline{\Jac}(\fAp)$ as a subalgebra
  \item[(c)] the $G$--invariant submodule $\left( \overline{\A^*}(\fAp,G) \mid_{c=0} \right)^G$ is a $\mu_{A'}$--dimensional $\CC$--algebra uniquely determined by $(\fAp,G)$ up to isomorphism. It is isomorphic to
    \begin{equation}
    \CC[z_{1,1},\dots, z_{3,n_3}]\left/\left(z_{i, k}z_{j,l},a_i z_{i,k}^{a_i}-a_j z_{j,l}^{a_j};i,j=1,2,3, k=1,\dots, n_i, l=1,\dots, n_j\right)\right..
    \end{equation}
\end{description}

\end{corollary}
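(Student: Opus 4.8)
The three assertions can all be read off from the multiplication rules of Propositions~\ref{prop: HH of fAp} and~\ref{prop: structure constants of xijk} once one understands the role of the normalisation $e_g=-c\,\tilde\xi_g$. The plan is to first upgrade $\overline{\A^*}(\fAp,G)$ from a $\CC(c)$-subspace of $\A^*(\fAp,G)$ to a $\CC[c]$-algebra, then to reduce modulo $c$, and only at the very end to take $G$-invariants. For \textbf{(a)} I would first check that $\circ$ preserves $\overline{\A^*}(\fAp,G)$ with structure constants in $\CC[c]$; this is precisely what the factor $-c$ buys. Multiplying the identities of Proposition~\ref{prop: HH of fAp} by the appropriate power of $c$ turns every $c^{-1}$ and $c^{-2}$ into a non-negative power, e.g. $e_{g_k^a}\circ e_{g_k^b}=\lfloor x_k\rfloor\,e_{g_k^{a+b}}$ when $g_k^{a+b}\ne\id$, and $e_{g_k^a}\circ e_{g_k^{-a}}=\lfloor x_k^2-c^2 a_l' a_m' x_l^{a_l'-2} x_m^{a_m'-2}\rfloor\,\xi_\id$, all coefficients lying in $\CC[c]$. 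Commutativity and associativity are inherited from $\A^*(\fAp,G)$: since $j_G=0$ only the even (fixed-coordinate) sectors occur, so Shklyarov's $G$-twisted-commutative product is genuinely commutative with no Koszul signs. Together with the unit $\xi_\id$ and freeness over $\CC[c]$ this yields the $\CC[c]$-algebra of rank $\mu_{A'}$.

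It remains, for \textbf{(a)}, to produce the Frobenius pairing. I would take the canonical pairing on $\ccHH^*(\MF_G(\fAp))\cong\A^*(\fAp,G)^G$, which pairs the $g$- and $g^{-1}$-twisted sectors through the residue pairing on $\Jac(f^g)$, and restrict it to $\overline{\A^*}(\fAp,G)$. Non-degeneracy over $\CC(c)$ is inherited; the point to verify is that, in the chosen $\CC[c]$-basis, the rescaling by $-c$ makes the Gram matrix have constant non-zero determinant, so that the pairing is $\CC[c]$-valued and stays non-degenerate after the reduction in \textbf{(b)}. Equivalently, one defines the counit as the coefficient of the socle class $\lfloor c^{-1}x_1x_2x_3\rfloor\xi_\id$ and checks that the induced bilinear form is non-degenerate.

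Part \textbf{(b)} is then essentially formal: freeness implies that $\overline{\A^*}(\fAp,G)\mid_{c=0}$ has the same rank, and the untwisted sector $\overline{\Jac}(\fAp)$ is closed under $\circ$ (products of $\xi_\id$-classes stay in the $\id$-sector and agree with the Jacobian product), so it embeds as a subalgebra whose reduction is the ring displayed in Section~\ref{section: cusp polynomial}. For \textbf{(c)} I would use the invariant generators $z_{i,k}:=[x_{i,k}]\mid_{c=0}$; they are $G$-invariant because the $\omega_i$-twisted sums diagonalise the $G$-action on the $K_i$-sectors. Proposition~\ref{prop: structure constants of xijk} at $c=0$ gives $z_{i,k}\circ z_{j,l}=0$ for distinct pairs (using \eqref{eq: xik1 xik2} when $i=j$, and $\lfloor x_ix_j\rfloor=0$ at $c=0$ in \eqref{eq: xijki xjkj} when $i\ne j$), together with $z_{i,k}^{\circ m}=\lfloor x_i^{(m-1)|G/K_i|}\rfloor z_{i,k}$ from \eqref{eq: xik xik}. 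Since $|G/K_i|=a_i'/a_i$ this forces $z_{i,k}^{\circ(a_i+1)}=0$ and, using $n_i|G/K_i|=|G|$, the uniform socle identity $a_i\,z_{i,k}^{\circ a_i}=|G|^{-1}\lfloor c^{-1}x_1x_2x_3\rfloor\xi_\id$, independent of $i,k$. Sending $z_{i,k}$ to these classes therefore defines a surjection from the presented algebra onto $(\overline{\A^*}(\fAp,G)\mid_{c=0})^G$, which is an isomorphism by comparing dimensions: both sides equal $2+\sum_i n_i(a_i-1)$, the right-hand count following from Proposition~\ref{prop:G}; this is the rank of $M_{\PP^1_{A,\Lambda}}$. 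As the presentation depends only on $A=A(\fAp,G)$, the algebra is determined by $(\fAp,G)$ up to isomorphism.

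The delicate step is the socle computation in \textbf{(c)}: one must check that in $z_{i,k}^{\circ a_i}$ all genuinely twisted contributions vanish — they do, because $\lfloor x_i^{a_i'-1}\rfloor=0$ already in the twisted sector $\Jac(f^{g_i^l})=\CC[x_i]/(x_i^{a_i'-1})$ — while the surviving untwisted contribution is normalised by $n_i|G/K_i|=|G|$ so as to be independent of $i$ and $k$. This uniform cancellation, together with the fact that the $G$-action on the twisted generators is genuinely different from the action on $\overline{\Jac}(\fAp)$, is exactly the mechanism that replaces $A'$ by $A$ and produces invariant dimension $\mu_A$ rather than $\mu_{A'}$. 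The other point requiring genuine care is the $\CC[c]$-integrality and non-degeneracy of the Frobenius pairing in \textbf{(a)}, where one matches the powers of $c$ from the $-c$ normalisation against those produced by the residue pairing; I expect this, rather than the combinatorial identification in \textbf{(c)}, to be the main obstacle.
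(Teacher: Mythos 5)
Your proposal is correct and follows essentially the same route as the paper's own proof: the embedding $\overline{\A^*}(\fAp,G)\hookrightarrow\A^*(\fAp,G)$ with structure constants rendered polynomial in $c$ by the $-c$ normalisation (part (a), Frobenius structure inherited from the ambient algebra), part (b) treated as formal, and part (c) verified on the invariant generators $[x_{i,k}]$ via Propositions~\ref{prop: HH of fAp} and~\ref{prop: structure constants of xijk} (mixed products vanish at $c=0$, common socle relation). The only additions are points the paper leaves implicit, and they are correct: the explicit surjection from the presented algebra plus the dimension count $2+\sum_i n_i(a_i-1)=\mu_A$ (which also shows that ``$\mu_{A'}$--dimensional'' in part (c) of the statement should read $\mu_A$), and the uniform socle value $a_i\,z_{i,k}^{\circ a_i}=|G|^{-1}\lfloor c^{-1}x_1x_2x_3\rfloor\,\xi_{\id}$ with the vanishing of the twisted contributions.
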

\begin{proof}
  By the definition we have an embedding
  \[
   \overline{\A^*}(f_{A'},G) \hookrightarrow \A^*(f_{A'},G).
  \]
  The products $e_g \circ e_h$ can be computed via Proposition~\ref{prop: HH of fAp}. It follows immediately from it that the structure constants of $\overline{\A^*}(f_{A'},G)$ are polynomial in $c$. We get a Frobenius algebra structure because $\A^*(f_{A'},G)$ is Frobenius algebra. Denote the product of it by $\circ$. This gives (a).
  
  Part (b) is straightforward. 
  By the definition, the action of $h = (h_1,h_2,h_3) \in G$ on $\tilde \xi_{g_1}$ reads $h(\tilde \xi_{g_1}) = (h_2h_3)^{-1}\tilde \xi_{g_1} = h_1 \tilde \xi_{g_1}$ and similarly for $e_{g_2},e_{g_3}$. Therefore $[x_{i,k}]$ above is $G$--invariant. 
  It is clear that all such elements generate $\left( \overline{\A^*}(\fAp,G) \mid_{c=0} \right)^G$ as a $\CC$--algebra.

  The mixed products $\lfloor x_i x_j \rfloor$ vanish in $\overline{\Jac}(\fAp) \mid_{c=0}$. From Eq.\eqref{eq: xijki xjkj} above we have $[x_{i,k_i}] \circ [x_{j,k_j}] = 0$ in $\left( \overline{\A^*}(\fAp,G) \mid_{c=0} \right)^G$. From Eq.\eqref{eq: xik xik} we get
    \begin{equation}
    [x_{i,k}]^{\circ a_i} = n_i [x_i^{a_i'}] \cdot \xi_\id = \frac{a_i'}{a_i |G|} [x_i^{a_i'}] \cdot \xi_\id.
    \end{equation}

  Because $a_i' \lfloor x_i^{a_i'} \rfloor = a_j' \lfloor x_j^{a_j'} \rfloor$ in $\overline{\Jac}(\fAp)$ we have $a_i[x_{i,k}]^{\circ a_i} = a_j[x_{j,k}]^{\circ a_j}$ in $\left( \overline{\A^*}(\fAp,G) \mid_{c=0} \right)^G$.
  This concludes the proof.
  
\end{proof}

\section{Frobenius manifold of a Landau--Ginzburg orbifold}\label{sec:symmetry group of affine cusp}

Fix a pair $(\fAp,G)$. Let $M_\fAp^\infty$ be the Frobenius manifold of $\fAp$ with a primitive form at infinity as in Section~\ref{section: frobenius manifolds}. It gives the Frobenius structure on $\mathcal{S} = \CC^{\mu_{A'}} \times M^s$ for $M^s \cong \{z \in \CC\backslash\{0\}~|~|\! |z|\! |<\epsilon\}$.
Denote $\mathcal{S}_{f_{A'},G} := \CC^{\mu_{A}-1} \times M^s$. 

\begin{definition}
 We call $M_{f_{A'},G}^{\infty}$ {\em the Frobenius manifold of the pair $(\fAp,G)$ with the primitive form at infinity} if it gives a Frobenius structure on $\mathcal{S}_{(\fAp,G)}$, satisfying the following axioms below.
\end{definition}

\subsection{Axioms}\label{section: axioms}
\begin{description}
    \item[(coordinates axiom)] $M_{\fAp,G}^\infty$ has the flat coordinates 
    \begin{align}
        v_{\id,k}, \ 1\le k \le \mu_A, \quad \text{and} \quad  v_{g_i^l,k}, \ 1 \le l \le n_i-1, \ 1 \le k \le a_i-1,
    \end{align}
    for $g_i$ being a generator of $K_i \subseteq G$. 
    
    The vector $\p/\p v_{\id,1}$ is the unit of the Frobenius manifold,
    \item[(quasihomogeneity axiom)] $M_{\fAp,G}^\infty$ is quasihomogeneous of conformal dimension $1$ with respect to the Euler field 
    \[
        E = v_{\id,1} \frac{\p}{\p v_{\id,1}} + \sum_{i=1}^3\sum_{l=1}^{n_i-1}\sum_{k=1}^{a_i-1} \frac{a_i-k}{a_i} v_{g_i^l,k} \frac{\p}{\p v_{g_i^l,k}} + \chi_A \frac{\p}{\p v_{\id,\mu_A}}
    \] 
    \item[(expansion axiom)] potential of $M_{\fAp,G}^\infty$ has a series expansion in $v_{\id,1},\dots,v_{\id,\mu_A-1}$, $v_{g_i^l,k}$ and $\exp(|G|v_{\mu_A})$,
    \item[(extended $\Jac$ axiom)] the tangent space $T_0 M_{\fAp,G}^\infty$, restricted at $v_{\id,1}=\dots=v_{\id,\mu_A-1}=0$ and $v_{g_i^l,k} = 0$, is isomorphic to $\overline{\A^*}(\fAp,G)$ as Frobenius $\CC[c]$--algebra with $c = \exp(|G|v_{id,\mu_A})$
    \item[($G$-grading axiom)] for every fixed $1 \le i \le 3$ the potential of $M_{\fAp,G}^\infty$ is invariant under the change of the variables 
    $v_{g_i^l,k} \to g_i^l v_{g_i^l,k}$, $1 \le l \le n_i-1$.
    
    It follows from this axiom together with extended $\Jac$ axiom that the product of $M^\infty_{\fAp,G}$ is $G$--graded. In particular,
    \[
        M^\infty_{\fAp,\id} := M^\infty_{\fAp,G} \mid _{v_{g_i^l,k} = 0}.
    \]
    is a Frobenius submanifold.
    
    \item[(invariant sector axiom)] 
    there is a Frobenius manifold isomorphism ${ (M_{\fAp}^\infty)^G \cong M_{\fAp,\id}^\infty }$.
    \item[($\Aut$--invariance axiom)] for every group automorphism $\phi: G \to G$, the potential of $M_{\fAp,G}^\infty$ is invariant with respect to the change of the variables $v_{g_i^l,k} \to v_{\phi(g_i^l),k}$.
\end{description}
\begin{remark}
All the axioms above are essential from the point of view of Landau-Ginzburg orbifolds. 
Coordinate axiom, invariant sector axiom and expansion axiom are expected by all Frobenius manifolds of Landau-Ginzburg orbifolds. 
Extended $\Jac$ axiom is specific for the cusp polynomials, having one--dimensional space of marginal deformations. 
Quasihomogeneity axiom could be reformulated by saying that the quasihomogeneity of the Frobenius manifold should agree with the quasihomogeneity of $\overline{A^*}(\fAp,G)$. 
$G$--grading axiom just requires the product of $M_{\fAp,G}^\infty$ to respect the $G$--grading.
$\Aut$--invariance axiom is specific for $(\fAp,G)$. It reflects the fact that the subgroups $K_i \subseteq G$ don't have any canonical generator whereas some generator is fixed in our notation.
\end{remark}

Main theorem of this paper is the following.
\begin{theorem}\label{theorem: main}
  Let $M_{(f_{A'},G)}^{\infty}$ be the Frobenius manifold of the pair $(f_{A'},G)$ as above.  
  Then there is a Frobenius manifolds isomorphism:
  \begin{equation}
    M_{\PP^1_{A,\Lambda}} \cong M_{(f_{A'},G)}^{\infty}.
  \end{equation}
  In particular the potential $\F_{(\fAp,G)}^\infty$ coincides with the potential of genus $0$ orbifold Gromov--Witten theory of $\PP^1_A$ after a suitable choice of coordinates.
\end{theorem}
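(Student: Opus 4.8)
The plan is to reduce everything to the uniqueness statement of Shiraishi, Theorem~\ref{thm:satisfies ISTr}. That theorem produces a \emph{unique} Frobenius manifold of rank $\mu_A$ and conformal dimension one obeying its conditions (i)--(vi), and Theorem~\ref{theorem: yuuki--GW} already tells us that $M_{\PP^1_{A,\Lambda}}$ is such a manifold. Hence it suffices to exhibit flat coordinates on $M_{(\fAp,G)}^{\infty}$ in which all of (i)--(vi) hold; uniqueness then forces $M_{\PP^1_{A,\Lambda}}\cong M_{(\fAp,G)}^{\infty}$, and the Corollary follows at once. The first step is to fix the mirror map. For each $i=1,2,3$ the $n_i$ equal entries of $A$ produced by the $i$-th coordinate are matched with the sectors $g_i^0=\id,g_i^1,\dots,g_i^{n_i-1}$: the identity-sector coordinates $v_{\id,i,j}$ together with the twisted coordinates $v_{g_i^l,k}$ play the role of Shiraishi's $t_{i',j}$ running over the $r$-tuple $A$, while $t_1=v_{\id,1}$ and the logarithmic coordinate $t_{\mu_A}$ is identified with $v_{\id,\mu_A}$ up to the normalisation tied to the marginal parameter $c$. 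On the twisted part the convenient frame is the Fourier-diagonalised basis $[x_{i,k}]$ of Proposition~\ref{prop: structure constants of xijk}, built precisely so that distinct variables multiply to zero.

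With this dictionary, conditions (i), (ii), (iii) are read off from the axioms. The unit $e=\p/\p v_{\id,1}$ and the Euler field of the \textbf{quasihomogeneity axiom} match (i): the massive coordinates $v_{\id,i,j}$ and $v_{g_i^l,k}$ already carry the weight $(a_i-k)/a_i$ demanded by Shiraishi, and the weight of the logarithmic coordinate is reconciled through the elementary identity relating $\chi_A$, $|G|$ and $\chi_{A'}$ that issues from Proposition~\ref{prop:G} (for the groups in play, $j_G=0$). Condition (ii) comes from the \textbf{extended $\Jac$ axiom}: the base-point tangent algebra is $\overline{\A^*}(\fAp,G)$ as a Frobenius $\CC[c]$-algebra, and evaluating its residue pairing in the basis $[x_{i,k}]$ returns $\eta(\p_{t_{i,j}},\p_{t_{i,a_i-j}})=1/a_i$ and $\eta(\p_{t_1},\p_{t_{\mu_A}})=1$. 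Condition (iii)---quasihomogeneity of $\F|_{t_1=0}$ and its membership in the power series ring in the $t_{i,j}$ and $e^{t_{\mu_A}}$---is exactly the combination of the quasihomogeneity and \textbf{expansion axioms}.

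The structural heart is condition (v). By the \textbf{extended $\Jac$ axiom} the tangent algebra in the classical limit is $(\overline{\A^*}(\fAp,G)\mid_{c=0})^G$, and Corollary~\ref{proposition: oJac structure}(c) identifies this with $\CC[z_{1,1},\dots,z_{3,n_3}]/(z_{i,k}z_{j,l},\,a_iz_{i,k}^{a_i}-a_jz_{j,l}^{a_j})$, which is precisely the algebra prescribed in (v) after relabelling the $r$ generators $z_{i,k}$ as $x_1,\dots,x_r$. Condition (iv) then follows: at $t_1=e^{t_{\mu_A}}=0$ only the degree-zero (constant-map) part of the potential survives, its third derivatives are the structure constants of the above algebra, and by \eqref{eq: xik1 xik2} and \eqref{eq: xijki xjkj} every product of distinct generators vanishes at $c=0$; a short WDVV argument then shows no higher coupling between blocks can appear, so $\F|_{t_1=e^{t_{\mu_A}}=0}$ splits as $\sum_i\G^{(i)}$ with $\G^{(i)}\in\CC[[t_{i,1},\dots,t_{i,a_i-1}]]$. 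Finally (vi) fixes the coefficient of the single first-order quantum term $(\prod_i t_{i,1})e^{t_{\mu_A}}$ to be $1$.

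I expect condition (vi), together with the \emph{global} consistency of the normalisations, to be the main obstacle. It is the only genuinely ``quantum'' check: one must propagate the powers of $c$ and the combinatorial factors built into $\tilde\xi_g=(1-\lambda^{-1})\xi_g$, $e_g=-c\,\tilde\xi_g$ and $[x_{i,k}]=\sum_l\omega_i^{(k-1)l}\lfloor x_i^{[G/K_i]-1}\rfloor e_{g_i^l}$ through Proposition~\ref{prop: HH of fAp} and Proposition~\ref{prop: structure constants of xijk}, and confirm that the coefficient of $(\prod_i t_{i,1})e^{t_{\mu_A}}$ equals exactly $1$ rather than merely being nonzero. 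Since the very same choice of basis and of the normalisation of $t_{\mu_A}$ must \emph{simultaneously} deliver the pairing values of (ii), the Euler weights of (i), and this quantum normalisation, the real work is verifying that one coordinate system makes all six conditions hold at once. Once it does, Theorem~\ref{thm:satisfies ISTr} and Theorem~\ref{theorem: yuuki--GW} give the isomorphism, and the coincidence of potentials is immediate.
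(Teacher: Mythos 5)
Your overall skeleton is the same as the paper's: reduce to Shiraishi's uniqueness theorem (Theorem~\ref{thm:satisfies ISTr}) combined with Theorem~\ref{theorem: yuuki--GW}, pass to the Fourier-diagonalised coordinates $t_{(i,k),j}=\sum_l\omega_i^{(k-1)l}v_{g_i^l,j}$, and read off conditions (i), (ii), (iii), (v) from the axioms and Corollary~\ref{proposition: oJac structure}. That part of your proposal is sound and matches Section~\ref{section: proof}. However, your treatment of the two remaining conditions has genuine gaps. For condition (vi) you propose to fix the coefficient of $\bigl(\prod_i t_{i,1}\bigr)e^{t_{\mu_A}}$ by propagating the powers of $c$ and the combinatorial factors in $\tilde\xi_g$, $e_g$, $[x_{i,k}]$ through Propositions~\ref{prop: HH of fAp} and~\ref{prop: structure constants of xijk}. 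This cannot work: that coefficient is a quantum correction of order $r+1$ in the flat coordinates, whereas the extended $\Jac$ axiom and the Hochschild algebra only pin down the structure constants (third derivatives) at the limit point, so no amount of normalisation bookkeeping in $\overline{\A^*}(\fAp,G)$ determines it. The paper instead uses the \emph{invariant sector axiom}: by Proposition~\ref{prop: r=3 q coefficient} (resp.\ Proposition~\ref{prop: higher r q coefficient}) the coefficient in question equals $|G|$ times a coefficient of the identity-sector restriction, which by the invariant sector axiom and the unorbifolded mirror theorem (Theorem~\ref{theorem: msUnorbifolded}) is a coefficient of $\F_{\PP^1_{A'}}$; Lemma~\ref{lemma: GW of the orbifolds covering} then evaluates it geometrically as $1/|G|$, the Gromov--Witten count attached to the covering $\PP^1_{A,\Lambda}\to[\PP^1_{A,\Lambda}/G]=\PP^1_{A'}$. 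This geometric input is indispensable and is absent from your argument.

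For condition (iv), the phrase ``a short WDVV argument then shows no higher coupling between blocks can appear'' conceals most of the paper's actual work. The vanishing of products of distinct generators in the $c=0$ algebra only kills \emph{cubic} mixed terms at the limit point; to exclude mixed monomials of arbitrary order the paper needs three separate ingredients: (a) Proposition~\ref{proposition: i--th sector of GW}, which handles couplings between different $i$-blocks and relies on the invariant sector axiom plus Theorem~\ref{theorem: msUnorbifolded}, not on WDVV; (b) an induction on the length of the monomial via iterated derivatives of WDVV (Proposition~\ref{proposition: condition (iv) in v}), which works \emph{only} when some $a_{i_0}>2$, because the argument needs an index $j_3>1$ to split; and (c) for $A=(2,\dots,2)$ --- where the limit algebra degenerates to the pairing alone --- explicit case-by-case computations (Cases 2 and 3 of Section~\ref{section: proof}, and Section~\ref{section: examples} for $A=(2,2,2)$ and $(2,2,2,2)$), in which WDVV actually admits \emph{two} distinct solutions that are only identified up to a linear change of coordinates, with the $\Aut$-invariance axiom needed to exclude further spurious solutions. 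So for the all-$2$'s weight sets your claimed splitting does not follow from WDVV plus the algebra structure, and this is precisely where the paper has to work hardest.
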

\begin{corollary}
    The Frobenius manifold $M_{(f_{A'},G)}^{\infty}$ is uniquely determined by the axioms. 
\end{corollary}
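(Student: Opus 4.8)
The plan is to reduce the assertion to the uniqueness part of Shiraishi's theorem (Theorem~\ref{thm:satisfies ISTr}). That theorem guarantees a \emph{unique} Frobenius manifold of rank $\mu_A$ and conformal dimension one satisfying its conditions (i)--(vi) for the parameter tuple $A$ of Section~\ref{section: GL orbifold}. Hence it suffices to prove that \emph{every} Frobenius structure on $\mathcal{S}_{(\fAp,G)}$ obeying the axioms of Section~\ref{section: axioms} satisfies those six conditions; any two manifolds satisfying the axioms are then both isomorphic to the same Shiraishi manifold (which, by Theorem~\ref{theorem: yuuki--GW}, is $M_{\PP^1_{A,\Lambda}}$), and uniqueness follows. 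In this sense the corollary is extracted from the very verification that proves Theorem~\ref{theorem: main}.

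The first step is to convert the sector-indexed flat coordinates of the coordinates axiom into Shiraishi's coordinates $(t_1,t_{i,j},t_{\mu_A})$ attached to the $r$ entries of $A$. For each $i$ the base value $a_i=a_i'/|G/K_i|$ occurs with multiplicity $n_i=|K_i|$ in $A$, and I would split the $K_i$-sector by the discrete Fourier transform over $K_i$ exactly as in the passage from the $e_{g_i^l}$ to the idempotent-like elements $[x_{i,k}]$ of Proposition~\ref{prop: structure constants of xijk}. This assembles the invariant coordinates $v_{\id,\bullet}$ together with the twisted coordinates $v_{g_i^l,k}$ into $n_i$ blocks of $a_i-1$ coordinates for each $i$, which, with the unit coordinate $t_1$ and the marginal coordinate $t_{\mu_A}$, give $\mu_A$ coordinates in total. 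One then checks that this linear change of coordinates carries the unit and the Euler field of the quasihomogeneity axiom into the form demanded by condition~(i), and the Frobenius pairing of the extended algebra $\overline{\A^*}(\fAp,G)$ (extended $\Jac$ axiom) into the diagonal form with entries $1/a_i$ of condition~(ii); here Equations~\eqref{eq: xik xik}--\eqref{eq: xijki xjkj} show that the Fourier basis is orthogonal and self-reproducing, which is what produces the block-diagonal pairing.

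The remaining conditions fall out of the structural axioms. Condition~(iii) is the conformal weight statement $E\F|_{t_1=0}=2\F|_{t_1=0}$ together with membership in the prescribed power-series ring; the first reflects the conformal dimension $d=1$ of the quasihomogeneity axiom and the second is precisely the expansion axiom, with $e^{t_{\mu_A}}$ playing the role of $\exp(|G|v_{\id,\mu_A})$. Condition~(v), the description of the algebra in the limit $t_1=\dots=t_{r,a_r-1}=e^{t_{\mu_A}}=0$, is supplied verbatim by Corollary~\ref{proposition: oJac structure}(c): the $G$-invariant reduction $\left(\overline{\A^*}(\fAp,G)\mid_{c=0}\right)^G$ is isomorphic to $\CC[z_{1,1},\dots,z_{3,n_3}]/(z_{i,k}z_{j,l},\,a_iz_{i,k}^{a_i}-a_jz_{j,l}^{a_j})$, which is exactly the algebra in condition~(v) once the generators $z_{i,k}$ are matched with the Fourier coordinates. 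Condition~(iv), the splitting $\F|_{t_1=e^{t_{\mu_A}}=0}=\sum_{i}\G^{(i)}$, follows from the $G$-grading and $\Aut$-invariance axioms, which force the potential to decouple across the distinct $K_i$-sectors and to be symmetric under permuting the $n_i$ Fourier copies within each sector; the untwisted part of the splitting is controlled by the invariant sector axiom via Theorem~\ref{theorem: msUnorbifolded}. Condition~(vi), the normalization of the coefficient of $\left(\prod_{i}t_{i,1}\right)e^{t_{\mu_A}}$, is read off from the three-point function dictated by the cup products of Proposition~\ref{prop: HH of fAp}.

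The main obstacle is the coordinate step of the second paragraph: one must verify that the Fourier reorganization over the subgroups $K_i$ is simultaneously a change of \emph{flat} coordinates, an isometry for $\eta$ onto Shiraishi's normal form, and compatible with the Euler grading, so that the axioms stated in the group-element coordinates $v_{g_i^l,k}$ genuinely translate into statements about the marked coordinates $t_{i,j}$ carrying the multiplicities of $A$. In particular, showing that the $\Aut$-invariance and $G$-grading axioms are strong enough to guarantee the sector-symmetry needed for condition~(iv), rather than merely a weaker grading, is the delicate point. Once this translation is secured, the remaining verifications are the routine bookkeeping indicated above, and uniqueness is then immediate from Theorem~\ref{thm:satisfies ISTr}.
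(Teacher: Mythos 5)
Your overall reduction is exactly the paper's: pass to the Fourier coordinates of Eq.~\eqref{eq: change of variables t=t(v)}, verify Shiraishi's conditions (i)--(vi) for the transformed potential, and conclude by the uniqueness in Theorem~\ref{thm:satisfies ISTr} combined with Theorem~\ref{theorem: yuuki--GW}. Your treatment of (i), (ii), (iii) and (v) matches Section~\ref{section: MS proof part 1}. The genuine gap lies in how you dispose of conditions (iv) and (vi), which are precisely the hard part of the paper's proof, and your proposed mechanisms for both are incorrect, not merely unverified.

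For condition (iv): it does \emph{not} follow from the $G$-grading and $\Aut$-invariance axioms. The $G$-grading axiom only forces each monomial of the potential to have total $G$-degree zero, and this permits mixed monomials in twisted variables of a single $K_i$-sector whose group elements multiply to the identity (for instance $v_{g_i,1}^{\,n_i}$, or products of $v_{g_i^{l},j}$ with $\sum l\equiv 0 \bmod n_i$); in the coordinates $t_{(i,k),j}$ these are exactly the cross-copy mixed terms that condition (iv) forbids, and $\Aut$-invariance can only symmetrize them, not kill them. The paper eliminates them using associativity: Proposition~\ref{proposition: condition (iv) in v} is an induction on derivatives of the WDVV equations (and needs some $a_{i_0}>2$), the cases $A=(2,\dots,2)$ require separate explicit WDVV computations (Cases 2 and 3 of Section~\ref{section: proof}), and the sets $A=(2,2,2)$, $(2,2,2,2)$ occupy all of Section~\ref{section: examples}; cross-$i$ mixed terms are handled by Proposition~\ref{proposition: i--th sector of GW} via the invariant sector axiom. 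For condition (vi): the coefficient of $\bigl(\prod_{i=1}^r t_{i,1}\bigr)e^{t_{\mu_A}}$ is an $r$-point datum, so for $r>3$ (which occurs whenever some $n_i>1$ contributes several copies to $A$) it is invisible to the three-point functions encoded by the cup products of Proposition~\ref{prop: HH of fAp}. The paper instead combines Propositions~\ref{prop: r=3 q coefficient} and~\ref{prop: higher r q coefficient} (which isolate the surviving $q$-linear terms, using (i)--(iii),(v)) with the invariant sector axiom and Lemma~\ref{lemma: GW of the orbifolds covering}, whose proof is geometric: the value $1/|G|$ is the inverse order of the deck group of the covering $\PP^1_{A,\Lambda}\to\PP^1_{A'}$, imported through the trivial-group mirror Theorem~\ref{theorem: msUnorbifolded}. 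Note finally that the paper's proof of (iv) in the case $\chi_{A'}\ge 0$ (Proposition~\ref{prop: r=3 noQ coefficient}) takes (vi) as an input, so the two conditions cannot be settled independently in the order you propose.
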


It follows immediately from its definition that for $\chi_{A'} = 0$ (respectively $\chi_{A'} > 0$) we have $\chi_{A} = 0$ (respectively $\chi_{A} > 0$) for any symmetry group allowed. 
We list all possible pairs $A'$ and $G$ with $\chi_{A'} \ge 0$ in Table~\ref{tab:SES list} and Table~\ref{tab:cusp sing list} below.

\begin{remark}
If both $A'$ and $A$ are of length $3$, it turns out that we have a isomorphism $M_{(f_{A'},G)}^{\infty} \cong M_{f_{A}}^{\infty}$.
If this holds we write the type of the corresponding singularity in the last column. Note that one should not mix up this isomorphism with LG-LG mirror isomorphism because both Frobenius manifolds represent the B-models.
\end{remark}
{
\renewcommand\arraystretch{1.5}
\begin{table}[h]
\caption{\label{tab:SES list} Classification of $(\fAp,G)$ with $\chi_{A'} = 0$.}
\begin{tabular}{ c | c | c | c | c}
    $A^\prime$ & type of $\fAp$ & $G$ & $A$ & type of $f_A$
    \\
    \hline
    $(3,3,3)$ & $\widetilde E_6$ & $\langle (\epi[0],\epi[\frac{1}{3}],\epi[\frac{2}{3}]) \rangle$ & $(3,3,3)$ & $\widetilde E_6$
    \\
    $(4,4,2)$ & $\widetilde E_7$ & $\langle (\epi[0],\epi[\frac{1}{2}],\epi[\frac{1}{2}]) \rangle$ & $(4,4,2)$ & $\widetilde E_7$
    \\
    $(4,4,2)$ & $\widetilde E_7$ & $\langle (\epi[\frac{1}{4}],\epi[\frac{3}{4}],\epi[0]) \rangle$ & $(2,2,2,2)$ & -
    \\
    $(4,4,2)$ & $\widetilde E_7$ & $\langle (\epi[0],\epi[\frac{1}{2}],\epi[\frac{1}{2}]),(\epi[\frac{1}{2}],\epi[0],\epi[\frac{1}{2}]) \rangle$ & $(2,2,2,2)$ & -
    \\
    $(6,3,2)$ & $\widetilde E_8$ & $\langle (\epi[\frac{1}{2}],\epi[0],\epi[\frac{1}{2}]) \rangle$ & $(3,3,3)$ & $\widetilde E_6$
    \\
    $(6,3,2)$ & $\widetilde E_8$ & $\langle (\epi[\frac{1}{3}],\epi[\frac{2}{3}],\epi[0]) \rangle$ & $(2,2,2,2)$ & -
\end{tabular}
\end{table}
}

{
\renewcommand\arraystretch{1.5}
\begin{table}[h]
\caption{\label{tab:cusp sing list} Classification of $(\fAp,G)$ with $\chi_{A'} > 0$.}
\begin{tabular}{ c | c | c | c | c}
    $A^\prime$ & type of $\fAp$ & $G$ & $A$ & type of $f_A$
    \\
    \hline
    $(2,3,5)$ & $\hat E_8$ & none & - & -
    \\
    $(2,3,4)$ & $\hat E_7$ & $\langle (\epi[\frac{1}{2}],\epi[0],\epi[\frac{1}{2}]) \rangle$ & $(3,3, 2)$ & $\hat E_6$
    \\
    $(2,3,3)$ & $\hat E_6$ & $\langle (\epi[0], \epi[\frac{1}{3}],\epi[\frac{2}{3}]) \rangle$ & $(2,2,2)$ & $\hat D_4$
    \\
    $(2,2,2k)$ & $\hat D_{2k+2}$ & $\langle (\epi[0],\epi[\frac{1}{2}],\epi[\frac{1}{2}]),(\epi[\frac{1}{2}],\epi[0],\epi[\frac{1}{2}]) \rangle$ & $(1,k,k)$ & $\hat A_{2k-1}$ 
    \\
    $(2,2,2k)$ 
    &  $\hat D_{2k+2}$ & $\langle (\epi[0], \epi[\frac{1}{2}],\epi[\frac{1}{2}]) \rangle$ & $(2,2, k)$ & $\hat D_{k+2}$
    \\
    $(2,2,2k)$ 
    &  $\hat D_{2k+2}$ & $\langle (\epi[\frac{1}{2}],\epi[\frac{1}{2}],\epi[0]) \rangle$ & $(1,2k,2k)$ & $\hat A_{4k-1}$ 
    \\
    $(2,2,2k+1)$ & $\hat D_{2k+3}$ & $\langle (\epi[\frac{1}{2}],\epi[\frac{1}{2}],\epi[0]) \rangle$ & $(1, 2k+1,2k+1)$ & $\hat A_{4k+1}$ 
    \\
    $(1,k\cdot m,l\cdot m)$ & $\hat A_{m(k+l)-1}$ & $\langle (\epi[0],\epi[\frac{1}{m}],\epi[\frac{m-1}{m}]) \rangle$ & $(1, k, l)$ & $\hat A_{k+l-1}$ 
\end{tabular}
\end{table}
}
\section{Proof of the main Theorem}\label{section: proof}

In what follows let $\F_G = \F_G(\bv)$ be the potential of $M_{(\fAp,G)}^\infty$. In this section we show that $\F_G$ satisfies all conditions of Theorem~\ref{theorem: yuuki--GW}.

Due to extended $\Jac$ axiom we can choose the coordinates on $M_{\fAp,G}^\infty$ such that
in $\overline{\A^*}(f_{A'},G)$ the coodinate $v_{\id,1}$ is dual to $\lfloor c^{-1}x_1x_2x_3\rfloor \cdot \xi_\id$, $v_{\id,\mu_A}$ is dual to $e_{\mu_{A'}}$ and $v_{g_i^j,l}$ is dual to $\left(e_{g_i^j} \right)^{\circ l}$.

\subsection{Conditions (i), (ii), (iii) and (v)}\label{section: MS proof part 1}
Consider the following change of variables $\bt = \bt(\bv)$:
\begin{equation}\label{eq: change of variables t=t(v)}
\begin{aligned}
  & t_{1} = v_{\id,1}, \quad t_{\mu_A} = v_{\id,\mu_A}/|G|,
  \\
  & t_{(i,k),j} = \sum_{l = 0}^{n_i-1} \omega_i^{(k-1)l} v_{g_i^l,j}, \quad \quad 1 \le i \le 3, \ 1 \le k \le n_i, \ 1 \le j \le a_i-1,
\end{aligned}
\end{equation}
where $\omega_i := \epi[1/n_i]$. Denote $\F(\bt) := |G| \F_{G}(\bv(\bt))$.

The coordinates $\bt$ introduced above satisfy the following properties. 
They are dual to the following generators of the algebra $\overline{\A^*}(\fAp,G)$ (recall Section~\ref{section: ext of HH}):
\begin{align*}
& t_{1} \text{: the flat coordinate dual to } [1] \text{ at the limit } \bs=s_{\mu_{A'}}=0. 
\\
& t_{(i,k),j} \text{: the flat coordinate dual to } [x_{i,k}]^j \text{ at the limit } \bs=s_{\mu_{A'}}=0. 
\\
& t_{\mu_A}  \text{: the flat coordinate dual to } \frac{(s_{\mu_{A'}})^{-1}[x_1x_2x_3]}{|G|}=\frac{a'_i[x_i]^{a'_i}}{|G|} \text{ at the limit } \bs=s_{\mu_{A'}}=0. 
\end{align*}

In the coordinates introduced condition (i) follows from coordinate and quasihomogeneity axioms of $M_{\fAp,G}^\infty$. 
Also condition (v) follows by Propositions~\ref{proposition: oJac structure}.

To show condition (ii) we compute explicitly the pairing using the Frobenius algebra property.

\begin{lemma}\label{lemma: pairingMain}
We have 
\begin{equation}
\eta\left(\frac{\p}{\p t_{1}}, \frac{\p}{\p t_{\mu_A}}\right)=1.
\end{equation}
\end{lemma}
\begin{proof}
It follows from condition (a) of $M_{\fAp,G}^\infty$, Lemma~4.4 in \cite{ist:2} and Lemma~4.6 in \cite{st:1}.
\end{proof}
\begin{lemma}\label{lemma: pairingTwist}
For all $i=1,2,3$ and $k=1,\dots, n_i$, we have 
\begin{equation}
\eta\left(\frac{\p}{\p t_{(i,k),j}}, \frac{\p}{\p t_{(i,k),a_i-j}}\right)=\frac{1}{a_i}.
\end{equation}
\end{lemma}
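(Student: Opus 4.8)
The plan is to reduce the computation of $\eta$ to the explicit Frobenius algebra structure of $\overline{\A^*}(\fAp,G)$ furnished by Corollary~\ref{proposition: oJac structure}. The key observation is that $\eta$ is a flat metric, hence constant in the coordinates $\bt$, so it suffices to evaluate it at the distinguished base point $\bs=s_{\mu_{A'}}=0$. By the extended $\Jac$ axiom the tangent algebra there is precisely $\overline{\A^*}(\fAp,G)$ (with $c=\exp(|G|v_{\id,\mu_A})$) \emph{as a Frobenius algebra}, so both the product $\circ$ and the pairing $\eta$ coincide with those of $\overline{\A^*}(\fAp,G)$. Under the duality fixed at the start of this section, $\partial/\partial t_{(i,k),j}$ is identified with $[x_{i,k}]^{\circ j}$, while the unit $e=\partial/\partial t_1$ corresponds to the unit $[1]\cdot\xi_\id$.

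First I would invoke the Frobenius (invariance) property $\eta(u,v)=\eta(e,u\circ v)$ to write
\[
\eta\!\left(\frac{\partial}{\partial t_{(i,k),j}},\frac{\partial}{\partial t_{(i,k),a_i-j}}\right)
=\eta\!\left([1]\xi_\id,\ [x_{i,k}]^{\circ j}\circ[x_{i,k}]^{\circ(a_i-j)}\right)
=\eta\!\left([1]\xi_\id,\ [x_{i,k}]^{\circ a_i}\right).
\]
In particular the value is manifestly independent of $j$, as the claim requires. Next I would compute $[x_{i,k}]^{\circ a_i}$ from Proposition~\ref{prop: structure constants of xijk}: iterating Eq.~\eqref{eq: xik xik} gives $[x_{i,k}]^{\circ m}=\lfloor x_i^{(m-1)|G/K_i|}\rfloor\,[x_{i,k}]$ as long as the corresponding group element is nontrivial, and pushing $m$ up to $a_i$ lands in the identity sector. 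The computation carried out in the proof of Corollary~\ref{proposition: oJac structure} then yields $[x_{i,k}]^{\circ a_i}=\tfrac{a_i'}{a_i|G|}\,[x_i^{a_i'}]\cdot\xi_\id=\tfrac{1}{n_i}\,[x_i^{a_i'}]\,\xi_\id$, using $a_i'=a_i|G|/n_i$. Finally, applying the Jacobian relation $\partial_{x_i}\fAp=0$, i.e. $[x_i^{a_i'}]=\tfrac{1}{c\,a_i'}[x_1x_2x_3]$, gives
\[
[x_{i,k}]^{\circ a_i}=\frac{1}{n_i\,c\,a_i'}\,[x_1x_2x_3]\,\xi_\id .
\]

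To pin the normalisation I would use Lemma~\ref{lemma: pairingMain}. Since $\partial/\partial t_{\mu_A}$ is dual to $\tfrac{c^{-1}[x_1x_2x_3]}{|G|}\xi_\id$, that lemma reads $\eta\!\left([1]\xi_\id,\ c^{-1}[x_1x_2x_3]\,\xi_\id\right)=|G|$. Substituting,
\[
\eta\!\left([1]\xi_\id,\ [x_{i,k}]^{\circ a_i}\right)
=\frac{1}{n_i\,a_i'}\,\eta\!\left([1]\xi_\id,\ c^{-1}[x_1x_2x_3]\,\xi_\id\right)
=\frac{|G|}{n_i\,a_i'}=\frac{1}{a_i},
\]
where the last equality again uses $a_i'=a_i|G|/n_i$ and the factors of $c$ cancel, consistently with $\eta$ being a constant metric. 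I expect the only real obstacle to be the sector-crossing computation of $[x_{i,k}]^{\circ a_i}$: one must track that the twisted-sector products accumulate exactly the power $x_i^{(a_i-1)|G/K_i|}$ and that the final step into the identity sector, governed by Case~2 of Proposition~\ref{prop: HH of fAp}, produces precisely the coefficient $1/n_i$ (so that the potentially conflicting $x_p^{a_p'-2}x_q^{a_q'-2}$ contribution is absorbed into the correct multiple of $[x_1x_2x_3]$); once that coefficient is verified, the remainder is routine bookkeeping.
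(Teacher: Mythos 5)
Your proof is correct and follows essentially the same route as the paper's: reduce to the limit point by flatness, use the Frobenius property $\eta(u,v)=\eta(e,u\circ v)$, compute $[x_{i,k}]^{\circ j}\circ[x_{i,k}]^{\circ(a_i-j)}=[x_{i,k}]^{\circ a_i}=\frac{a_i'}{a_i|G|}[x_i^{a_i'}]\xi_\id$, identify this (via the Jacobian relation) with $\frac{1}{a_i}$ times the class dual to $\p/\p t_{\mu_A}$, and conclude by Lemma~\ref{lemma: pairingMain}. The only cosmetic difference is that you unwind the identification $(s_{\mu_{A'}})^{-1}[x_1x_2x_3]/|G|=a_i'[x_i]^{a_i'}/|G|$ explicitly through the relation $[x_i^{a_i'}]=\frac{1}{c\,a_i'}[x_1x_2x_3]$, which the paper leaves implicit in its description of the flat coordinates; your coefficient $\frac{1}{n_i}=\frac{a_i'}{a_i|G|}$ is the correct one.
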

\begin{proof}
It is enough to calculate the pairing at the limit ${\bf t}=e^{t_{\mu_A}}=0$.
There we have 
\begin{eqnarray*}
& &\left.\eta\left(\frac{\p}{\p t_{(i,k),j}}, \frac{\p}{\p t_{(i,k),a_i-j}}\right)\right\vert_{{\bf t}=e^{t_{\mu_A}}=0}\\
&=&\left.\eta\left(\frac{\p}{\p t_1},\frac{\p}{\p t_{(i,k),j}}\circ \frac{\p}{\p t_{(i,k),a_i-j}}\right)\right\vert_{{\bf t}=e^{t_{\mu_A}}=0}\\
&=&\frac{1}{a_i}\cdot\left.\eta\left(\frac{\p}{\p t_1},\frac{\p}{\p t_{\mu_A}}\right)\right\vert_{{\bf t}=e^{t_{\mu_A}}=0}\\
&=& \frac{1}{a_i},
\end{eqnarray*}
since $x_{i,k}^{j}\cdot x_{i,k}^{a_i-j}=\frac{1}{a_i}\cdot \frac{a'_ix_i^{a'_i}}{|G|}$ at the limit ${\bf t}=e^{t_{\mu_A}}=0$.
\end{proof}



\subsection{Conditions (iv) and (vi) for $\chi_{A'} \ge 0$}
We have either $|A| = 3$ or $A = (2,2,2,2)$. The latter case together with $A = (2,2,2)$ are very special due to many symmetries and weak algebra structure. We investigate them in details in Section~\ref{section: examples}. The rest of this section focuses on the case $|A| = 3$, $A \neq (2,2,2)$.

In what follows we need to examine the structure of power series $p(\bt) \in \CC[[t_{i,j},q]]$ with $q = \exp(t_{\mu_A})$. For any monomial $\phi$ in variables $q$ and $t_{i,j}$ denote by $[\phi]p(\bt)$ the coefficient of the monomial $\phi$ in the series expansion of $p(\bt)$. 

We are going to make use of the following two propositions.
\begin{proposition}[{\cite[Proposition 3.15]{ist:1}} (see also Remark~3.14 in loc.cit.)]\label{prop: r=3 q coefficient}
Let $A$ be such that $1 \le a_1 \le a_2 \le a_3$ and $\F_{A}$ satisfies conditions (i),(ii),(iii),(v). We have the three cases.

\begin{description}
 \item[(a)] Let $a_1 \ge 3$. Then $[t_{i,\alpha}t_{j,\beta}t_{k,\gamma}q] \F_A$ is none-zero only if $\alpha=\beta=\gamma=1$ and $\{i,j,k\} = \{1,2,3\}$.
 \item[(b)] Let $A = (2,2,a_3)$ for some $a_3 \ge 3$ and $\F_{A}$ be symmetric in variables $t_{1,1},t_{2,1}$. Then $[t_{i,\alpha}t_{j,\beta}t_{k,\gamma}q] \F_A$ is none-zero only if $\alpha=\beta=\gamma=1$ and $\{i,j,k\} = \{1,2,3\}$.
 \item[(c)] Let $A = (1,2,2)$ and be $\F_{A}$ symmetric in variables $t_{2,1},t_{3,1}$. Then $[t_{i,\alpha}t_{j,\beta}q] \F_A$ is none-zero only if $\alpha=\beta=1$ and $\{i,j\} = \{2,3\}$.
\end{description}
\end{proposition}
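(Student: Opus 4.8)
The plan is to exploit two gradings carried by $\F_A$: the rational grading from the Euler field of condition~(i), and a finer sector grading valued in $\prod_{m=1}^{3}\ZZ/a_m\ZZ$. First I would record the quasihomogeneity constraint. Since $\F_A$ has conformal dimension~$1$, every cubic or higher monomial is Euler-homogeneous of degree~$2$; assigning $t_{i,j}$ the weight $1-\tfrac{j}{a_i}$ and $q=\exp(t_{\mu_A})$ the weight $\chi_A=\tfrac1{a_1}+\tfrac1{a_2}+\tfrac1{a_3}-1$, nonvanishing of $[t_{i,\alpha}t_{j,\beta}t_{k,\gamma}q]\F_A$ forces
\begin{equation*}
\frac{\alpha}{a_i}+\frac{\beta}{a_j}+\frac{\gamma}{a_k}=\frac1{a_1}+\frac1{a_2}+\frac1{a_3}.
\end{equation*}
When $\{i,j,k\}=\{1,2,3\}$ this already yields $\alpha=\beta=\gamma=1$. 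For repeated sectors, however, the weight count admits spurious solutions---for instance $A=(3,3,6)$ with all three insertions in the sector $a_3=6$ gives $\alpha+\beta+\gamma=5$---so quasihomogeneity alone is insufficient and the second grading is decisive.

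The key step is to establish the selection rule coming from this sector grading. I would show that $\F_A$ is invariant under the finite abelian group whose $m$-th factor acts by $t_{m,j}\mapsto\epi[j/a_m]\,t_{m,j}$ for $1\le j\le a_m-1$, together with $q\mapsto\epi[-1/a_m]\,q$, and fixes the remaining coordinates. Invariance then forces, for every sector $m$ and every monomial $t_{i_1,\alpha_1}\cdots t_{i_n,\alpha_n}\,q$ occurring in $\F_A$, the congruence
\begin{equation*}
\sum_{l:\,i_l=m}\alpha_l\equiv 1\pmod{a_m}.
\end{equation*}
To obtain this symmetry I would start from condition~(v): at the limit $q=0$ the algebra is $\CC[x_1,\dots,x_r]/(x_ix_j,\,a_ix_i^{a_i}-a_jx_j^{a_j})$, which is manifestly $\prod_m\ZZ/a_m\ZZ$-graded with $x_i^{j}$ of charge $j\,\hat e_i$, and then propagate the grading to all orders in $q$ through the associativity equations, each power of $q$ raising the total charge by $(1,1,1)$. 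The explicit $\F_{\PP^1_{2,3,3}}$ recorded in Section~\ref{section: frobenius manifolds} provides a convenient check of the rule.

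With the congruence the combinatorics closes at once. Since $a_m\ge 2$ for every nontrivial sector, a sector receiving no insertion has charge $0\not\equiv 1$; hence in case~(a) all three sectors must be hit, and with exactly three insertions this means one per sector, so $\{i,j,k\}=\{1,2,3\}$, whereupon $\alpha_m\equiv 1\pmod{a_m}$ with $1\le\alpha_m\le a_m-1$ gives $\alpha=\beta=\gamma=1$. Case~(c), $A=(1,2,2)$, is identical with two insertions distributed over the two nontrivial sectors $\{2,3\}$. In case~(b), where $a_1=a_2=2$, the parity congruences in sectors~$1$ and~$2$ again force both nonempty, reducing to one insertion per sector with $\gamma\equiv1\pmod{a_3}$; here the hypothesis that $\F_A$ be symmetric in $t_{1,1},t_{2,1}$ is what allows the two coincident $a=2$ sectors to be graded on equal footing.

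The main obstacle is the propagation step in the second paragraph: showing that the $\prod_m\ZZ/a_m\ZZ$-grading visible at $q=0$ through condition~(v) survives in the full quantum product, and is not merely the coarse Euler grading. Once this selection rule is in place the rest is elementary arithmetic, and the degenerate $a=2$ sectors of cases~(b) and~(c) are precisely where the extra symmetry hypotheses enter to fix the grading.
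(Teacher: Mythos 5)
The statement you set out to prove is not actually proved in this paper: it is quoted from \cite{ist:1} (Proposition~3.15 there), where it is established by WDVV--type arguments --- extraction of coefficients of specific monomials from expressions $\wdvv(t_{i,\alpha},t_{j,\beta},t_{k,\gamma},t_{l,\delta})$, with the $q^0$ three--point functions evaluated through the limit algebra of condition (v) --- of the same kind as the arguments in Section~\ref{section: proof} of this paper. Your first step, the Euler--field count giving $\frac{\alpha}{a_i}+\frac{\beta}{a_j}+\frac{\gamma}{a_k}=\frac{1}{a_1}+\frac{1}{a_2}+\frac{1}{a_3}$, is correct, and so is the arithmetic deducing the conclusion from the sector--wise congruence $\sum_{l:\,i_l=m}\alpha_l\equiv 1\pmod{a_m}$. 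But that congruence --- equivalently, invariance of $\F_A$ under $t_{m,j}\mapsto\epi\left[j/a_m\right]t_{m,j}$, $q\mapsto\epi\left[-1/a_m\right]q$ --- is precisely the nontrivial content of the proposition, and you do not prove it: you assert it should ``propagate'' from the $q=0$ algebra of condition (v) through associativity, and you yourself flag this step as the main obstacle. As it stands, your argument establishes only the subcase $\{i,j,k\}=\{1,2,3\}$, which already follows from quasihomogeneity alone.

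Moreover, the propagation principle you appeal to is false as stated, so this gap cannot be closed by a soft argument. Take $A=(2,2,a_3)$, let $\F$ be the Gromov--Witten potential of $\PP^1_{2,2,a_3}$ (which satisfies (i),(ii),(iii),(v)), and perform the orthogonal change of coordinates $t_{1,1}\mapsto\frac{1}{\sqrt 2}(t_{1,1}+t_{2,1})$, $t_{2,1}\mapsto\frac{1}{\sqrt 2}(t_{1,1}-t_{2,1})$, all other coordinates fixed. This preserves condition (i) (both coordinates have Euler weight $\tfrac12$), condition (ii) (the map is orthogonal for the block $\tfrac12 I$ of $\eta$), condition (iii), and also condition (v): since $x_1^2=x_2^2$ in the limit algebra, the elements $u_1=\frac{1}{\sqrt2}(x_1+x_2)$, $u_2=\frac{1}{\sqrt2}(x_1-x_2)$ satisfy $u_1u_2=\frac12(x_1^2-x_2^2)=0$, $u_ix_3=0$ and $u_i^2=x_1^2$, so $u_1,u_2,x_3$ give a presentation of exactly the required shape. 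Yet the transformed potential contains the term $\frac12\left(t_{1,1}^2-t_{2,1}^2\right)t_{3,1}q$, violating both your selection rule and the conclusion of the proposition. Hence the grading visible at $q=0$ does \emph{not} propagate in general: any correct proof must use the hypothesis $a_1\ge 3$ in case (a) (this is what rigidifies the limit algebra and forbids such rotations, since for $a_i\ge 3$ the classes $x_1^2,x_2^2$ are linearly independent and $u_1u_2\neq 0$), and must use the symmetry hypotheses in cases (b) and (c) as genuine input to the argument --- not, as in your final paragraph, as a remark that symmetry puts the coincident sectors ``on equal footing.'' This case--dependence is exactly why the proposition is split into three cases, and your proposed argument is blind to it.
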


\begin{proposition}[{\cite[Proposition 3.24]{ist:1}} (see also remark above)]\label{prop: r=3 noQ coefficient}
Let $A = (a_1,a_2,a_3) \neq (2,2,2)$ and $\F_{A}$ satisfy conditions (i),(ii),(iii),(v),(vi). 
If $a_i = a_j = 2$ assume in addition $\F_{A}$ to be symmetric in the variables $t_{i,1},t_{j,1}$.

Then $\F_A$ satisfies condition~(iv).
\end{proposition}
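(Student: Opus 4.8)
The plan is to recast condition~(iv) as the vanishing of all ``mixed'' contributions and then eliminate them by an associativity recursion. Writing $q:=e^{t_{\mu_A}}$, the decomposition $\F_A|_{t_1=q=0}=\sum_i \G^{(i)}$ with $\G^{(i)}\in\CC[[t_{i,1},\dots,t_{i,a_i-1}]]$ is equivalent to the identity $\p_{t_{i,j}}\p_{t_{i',j'}}\F_A|_{t_1=q=0}\equiv 0$ for every pair of distinct arms $i\neq i'$ and all admissible $j,j'$ (the decomposition clearly forces the mixed partials to vanish; conversely any monomial supported on two arms would produce a nonzero mixed partial). Thus the goal is to show that no monomial in $\F_A|_{t_1=q=0}$ involves the variables of two or more arms.

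I would first extract the grading consequences of conditions~(i) and~(iii): under the Euler field each $t_{i,j}$ has weight $\tfrac{a_i-j}{a_i}\in(0,1)$ while $\F_A|_{t_1=0}$ is weighted-homogeneous of weight $2$, so every $q^0$-monomial has at least three $t$-factors and is therefore controlled by a genus-zero, $q^0$ correlator with at least three insertions. The heart of the argument is then an induction on the number of insertions, powered by the WDVV equations together with the block form of $\eta$ coming from condition~(ii): the propagator $\eta^{ef}$ is nonzero only when $e,f$ lie in a common arm or $\{e,f\}=\{t_1,t_{\mu_A}\}$. The base case is the three-point data at $t=q=0$, which by condition~(v) consists of the structure constants of $\CC[x_1,\dots,x_r]/(x_ix_j,\,a_ix_i^{a_i}-a_jx_j^{a_j})$; since $x_ix_j=0$ for $i\neq j$ there, every mixed three-point $q^0$ correlator vanishes. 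For the inductive step I would apply WDVV with one external leg anchored in arm $i$ and one in a distinct arm $i'$, expand the intermediate propagator via the block structure of $\eta$, and observe that any factorization actually joining the two arms must route through the distinguished $t_1$--$t_{\mu_A}$ channel. Since $\p_{t_{\mu_A}}$ acts as $q\p_q$, such a channel emits a strictly positive power of $q$ and cannot contribute to the $q^0$-part; the surviving terms either vanish or reduce to mixed correlators with fewer insertions, which are zero by induction. Here Proposition~\ref{prop: r=3 q coefficient}, together with condition~(vi), supplies the precise $q^1$ three-point data seeding these channels.

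The step I expect to be the main obstacle is the bookkeeping of $q$-degrees through the recursion: one must verify that \emph{no} chain of structure constants can connect two distinct arms at order $q^0$, ruling out spurious mixing that threads the socle without emitting a factor of $q$. A secondary difficulty appears when two arms satisfy $a_i=a_j=2$: then $t_{i,1}$ and $t_{j,1}$ share the weight $\tfrac12$ and the same $\eta$-normalization, so associativity alone cannot separate them, and this is exactly where the hypothesised symmetry in $t_{i,1},t_{j,1}$ must be used. Finally $A=(2,2,2)$ is excluded because all three variables then have weight $\tfrac12$ and the induction has no heavier arm to anchor it; that case, like $A=(2,2,2,2)$, is better handled by the direct computation carried out in Section~\ref{section: examples}.
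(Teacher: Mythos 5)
Your reformulation of condition (iv), the weight bound showing every $q^0$-monomial at $t_1=0$ has at least three factors, the base case via condition (v) (mixed three-point correlators vanish because $x_ix_{i'}=0$ in the limit algebra), and the observation that the $t_1$--$t_{\mu_A}$ channel drops out of the $q^0$-part of WDVV are all correct; this is indeed the skeleton of the argument in \cite{ist:1} (note the paper itself does not reprove this proposition — it cites it — but it proves an equivariant analogue, Proposition~\ref{proposition: condition (iv) in v}, by exactly this kind of WDVV induction). However, your inductive step has a genuine gap. ``Anchoring one leg in arm $i$ and one in arm $i'$'' is not enough: the engine of the actual induction is a splitting device, in which an insertion $t_{k,j}$ with $j\ge 2$ is replaced by the two external WDVV legs $t_{k,\kappa}$, $t_{k,j-\kappa}$, so that contracting the three-point function $\langle t_f,t_{k,\kappa},t_{k,j-\kappa}\rangle$ against $\eta^{ef}$ recreates the desired correlator with a nonzero constant, while every remaining term either vanishes or has fewer insertions. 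Quasihomogeneity guarantees a splittable insertion exists precisely when the monomial is not built entirely from variables $t_{\bullet,1}$ of arms of order two. Without such an insertion, WDVV at order $q^0$ returns tautologies: for $a_i=a_j=2$, every $q^0$-specialization of WDVV aimed at $\langle t_{i,1},t_{i,1},t_{j,1},t_{j,1}\rangle$ collapses to the symmetry of correlators and gives no information.

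This second point is not bookkeeping you can absorb into your $q^0$-recursion: the monomials $t_{i,1}^{b_1}t_{j,1}^{b_2}$, $b_1+b_2=4$, with $a_i=a_j=2$ (these exist exactly when two arms have order two, which is also why $(2,2,2)$ must be excluded) are left completely unconstrained at order $q^0$, and the symmetry hypothesis alone does not kill them, since such monomials are already symmetric in $t_{i,1},t_{j,1}$. The actual proof must leave the $q^0$-level: one takes the coefficient of $q^1$ in suitable WDVV equations, where $[q^1](AB)=[q^0]A\,[q^1]B+[q^1]A\,[q^0]B$ couples the unknown $q^0$-coefficients to $q^1$ three-point data; condition (vi) together with Proposition~\ref{prop: r=3 q coefficient}(b) (this is where the symmetry hypothesis is really consumed) pins down that data, e.g.\ the coefficient of $t_{1,1}t_{2,1}t_{3,1}q$ equals $1$, so the resulting linear equations force the mixed $q^0$-coefficients to vanish. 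This is precisely the mechanism of the paper's own computation in Case 2 of Section~\ref{section: proof}, where equations such as $(n-2)c_{1,1,2}+2c_{1,3}=0$ and $c_{2,2}=c_{1,3}=0$ are extracted from the $q^1$-part of WDVV. Your framework, which by design discards every term carrying a positive power of $q$, is structurally unable to produce these constraints, so the $(2,2)$-mixed monomials survive your argument; your mention of Proposition~\ref{prop: r=3 q coefficient} and condition (vi) as ``seeding the channels'' points at the right ingredients but is not an argument.
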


{\bf Assume $A$ being as in case~(a) above.} 
We have
\begin{align}
    \prod_{i=1}^3 \prod_{l=1}^{n_i} t_{(i,l),1} e^{t_{\mu_{A}}} = \prod_{i=1}^3 \left( v_{g_i^0,1} \right)^{n_i} e^{|G|v_{\mu_A}} + \dots.
\end{align}
Due to proposition above we have
\begin{align}
    \left[ \prod_{i=1}^3 \prod_{l=1}^{n_i} t_{(i,l),1} e^{t_{\mu_A}} \right] \F(\bt) = |G| \left[ \prod_{i=1}^3 \left( v_{g_i^0,1} \right)^{n_i} e^{|G|v_{\mu_A}}\right] \F_G.
\end{align}
By invariant secto axiom of $M_{\fAp,G}^\infty$ we have in the invariant sector that the coefficient of $v_{g_1^0,1}v_{g_2^0,1}v_{g_3^0,1}e^{|G|v_{\mu_A}}$ is equal to $1/|G|$ by the following lemma beneath. This gives condition (vi) for $\F(\bt)$.

\begin{lemma}\label{lemma: GW of the orbifolds covering}
  Let the pair $(\fAp,G)$ be as above.
  The term 
  \begin{equation}\label{eq:covering}
    \left(\prod_{i=1}^{3}(t_{i,|G/K_i|})^{n_i}\right)e^{|G|t_{\mu_{A'}}}
  \end{equation}
  occurs with the coefficient $1/|G|$ in $\F_{\fAp,\zeta^\infty}$. 
\end{lemma}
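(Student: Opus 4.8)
The plan is to reduce the statement to a single genus-zero Gromov--Witten invariant of $\PP^1_{A'}$ and to evaluate it by a covering argument. Throughout write $d_i:=|G/K_i|$, so that $|G|=n_id_i$ and $a_i=a_i'/d_i$; since $G$ acts on $x_i$ through a character of order $d_i$, the class $[x_i^{d_i}]$ is $G$-invariant, and the monomial in the statement lies in the $G$-invariant sector of $\F_{\fAp,\zeta^\infty}$. By Theorem~\ref{theorem: msUnorbifolded} this potential coincides, in the flat coordinates $t_\bullet$, with the genus-zero potential of the Gromov--Witten theory of $\PP^1_{A'}$, so the coefficient to be computed equals, up to the factor $\prod_{i=1}^3 1/n_i!$, the correlator
\[
\left\langle (\Delta_{1,d_1})^{n_1},(\Delta_{2,d_2})^{n_2},(\Delta_{3,d_3})^{n_3}\right\rangle^{\PP^1_{A'}}_{0,\,\sum_i n_i,\,|G|}.
\]
Whenever $a_i=1$ we have $d_i=a_i'$, so $[x_i^{a_i'}]=\tfrac{c^{-1}}{a_i'}[x_1x_2x_3]$ is proportional to $e_{\mu_{A'}}$ and the corresponding factor is not a twisted insertion but the point class; it is removed by the divisor equation and absorbed into the degree $e^{|G|t_{\mu_{A'}}}$. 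Disentangling this bookkeeping is the first task.

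Before evaluating I would verify the selection rule. Applying the Euler field $E=t_1\partial_{t_1}+\sum_{i,j}\tfrac{a_i'-j}{a_i'}t_{i,j}\partial_{t_{i,j}}+\chi_{A'}\partial_{t_{\mu_{A'}}}$ to the monomial gives the weight $\sum_{i=1}^3\left(1-\tfrac1{a_i}\right)n_i+\chi_{A'}|G|$. Using $n_i/a_i=|G|/a_i'$, together with $\sum_i n_i=|G|-2j_G+2$ from Proposition~\ref{prop:G} and $\chi_{A'}=2+\sum_i(\tfrac1{a_i'}-1)$, the terms involving $\sum_i 1/a_i'$ cancel and the weight collapses to $2(1-j_G)$. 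As the construction assumes $j_G=0$, the weight is exactly $2$, which both confirms that the monomial is a genuine term of the potential and shows that the exponent $|G/K_i|$ is the only one compatible with the grading.

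For the evaluation I would use the covering picture indicated by Riemann--Hurwitz. The associated $G$-Galois cover of $\PP^1$, carrying $n_i$ preimages of coarse ramification $d_i$ over the $i$-th special point, has genus $g$ with $2-2g=2|G|-\sum_{i=1}^3 n_i(d_i-1)=\sum_i n_i-|G|=2$, hence $g=0$, matching the genus-zero correlator. Because $G$ is abelian, once the three local monodromies (of orders $d_i$) are fixed by the chosen twisted sectors, the monodromy homomorphism out of the thrice-punctured sphere is determined, so there is a unique connected such cover, and its automorphism group is $G$. Counting the $\prod_i n_i!$ labellings of the interchangeable preimages against the automorphism weight $1/|G|$, the correlator equals $\prod_i n_i!/|G|$, whence, with the prefactor $\prod_i 1/n_i!$, the sought coefficient is $1/|G|$.

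The main obstacle is this last evaluation made rigorous: isolating the single cover inside $\overline{\mathcal M}_{0,\sum_i n_i}(\PP^1_{A'},|G|)$, excluding any multiple-cover or excess-intersection contributions and non-Galois covers sharing the ramification profile, and matching all automorphism and combinatorial factors to land exactly on $1/|G|$. A route avoiding the enumerative input is to compute the same correlator inside the Frobenius manifold, reconstructing it by the WDVV equations from the normalised three-point correlator $\langle\Delta_{1,1},\Delta_{2,1},\Delta_{3,1}\rangle^{\PP^1_{A'}}_{0,3,1}=1$ of condition~(vi) for $A'$, while using the Euler grading of the second step to bound the admissible intermediate terms; there the difficulty is only the length of the computation.
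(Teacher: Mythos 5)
Your proposal takes essentially the same route as the paper's own proof: reduce via the mirror theorem (Theorem~\ref{theorem: msUnorbifolded}) to the corresponding coefficient of the Gromov--Witten potential of $\PP^1_{A'}$, and identify that term as counting the Galois covering $\PP^1_{A,\Lambda}\longrightarrow \PP^1_{A'}=[\PP^1_{A,\Lambda}/G]$, whose deck transformation group $G$ produces the automorphism factor $1/|G|$. The paper's proof is exactly this two-step argument stated in two lines; your extra verifications (the Euler-grading selection rule, the Riemann--Hurwitz genus check, monodromy uniqueness for abelian $G$, and the cancellation of the $\prod_i n_i!$ labellings against the $1/n!$ in the potential) simply make explicit what the paper leaves implicit, including the honest acknowledgement that isolating this cover in the moduli space is the only step requiring further rigor.
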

\begin{proof}
Consider the potential of the GW--theory of $\F_{\PP^1_{A'}}$. Due to the mirror symmetry Theorem~\ref{theorem: msUnorbifolded} it is enough to show the same statement about the potential $\F_{\PP^1_{A'}}$.

Consider the weight set $A$ associated to $A^\prime$ and $G$ as above.
The term \eqref{eq:covering} counts the Gromov--Witten invariants 
for the covering map $\PP^1_{A,\Lambda}\longrightarrow \PP^1_{A'}=[\PP^1_{A,\Lambda}/G]$.
Therefore, the coefficient is the inverse of the order of the covering transformation group, 
which is $1/|G|$.
\end{proof}
This allows us to apply Proposition~\ref{prop: r=3 noQ coefficient} giving us condition~(iv) too.
\\
\\
\indent We are going to use the same invariant sector argument for the remaining cases (b) and (c). For that we also need to show that additional symmetry conditions hold.

{\bf Assume $A$ being as in case~(b) above,} $A = (2,2,k)$ with $k \ge 3$. Such a set can only come from $A' = (2,2,2k)$ considered with $G = K_1$. Therefore $\F(\bt)$ is a function of $t_{1}$, $t_{5}$ and $t_{(1,1),1}$, $t_{(1,2),1}$, $t_{(3,1),1}$. The function $\F(\bt)$ is symmetric in $t_{(1,1),1}, t_{(1,2),1}$ due to $\Aut$--invariance axiom. Conditions of Proposition~\ref{prop: r=3 q coefficient} are fulfilled and we can apply the same invariant sector argument as above.
This gives conditions (vi) and (iv) for $\F(\bt)$.

{\bf Assume $A$ being as in case~(c) above,} $A = (1,2,2)$. Such a set can only come from $A' = (2,2,4)$ considered with $G = G^D$, $A' = (2,2,2)$ considered with $G = K_3$ or $A' = (1,2m,2m)$ with order $m$ group $G \subset K_1$. In the first two cases $\F(\bt)$ to satisfy the symmetry condition needed exactly in the same way as in case~(b) above. In the last case $\F(\bt)$ is a function of $t_{(2,1),1}$ and $t_{(3,1),1}$. It is symmetric in these variables because $\F(\bt)$ is fully defined by invariant sector axiom, where this symmetry condition holds. We conclude that conditions (vi) and (iv) hold for $\F(\bt)$.

\subsection{Conditions (iv) and (vi) for $\chi_{A'} < 0$}
We have $r := |A| \ge 4$ and $A \neq (2,2,2,2)$. We have the following analogue of Proposition~\ref{prop: r=3 q coefficient}.

\begin{proposition}[{\cite[Proposition 3.4]{shi:1}}]\label{prop: higher r q coefficient}
Let $A$ be such that $r:=|A| > 3$, $\chi_A < 0$ and $\F_{A}$ satisfies conditions (i),(ii),(iii),(v).
Then the coefficient $\left[\prod_{k=1}^r t_{i_k,\alpha_1}\dots t_{i_k,\alpha_{p_k}} \cdot e^{t_{\mu_A}} \right] \F_A$ with $\sum_{k=1}^r p_k \ge r$ is none-zero only if $\alpha_\bullet =1$ and $\{i_1,\dots,i_r\} = \{1,\dots,r\}$.
\end{proposition}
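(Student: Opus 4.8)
The plan is to split the statement into a grading part, which produces the skeleton of the selection rule, and an associativity part, which is what actually forbids the monomials that the grading alone cannot. First I would set up the Euler grading. By condition (i) the weights are $\deg t_{i,j}=(a_i-j)/a_i$, and since $E(e^{t_{\mu_A}})=\chi_A e^{t_{\mu_A}}$ we have $\deg e^{t_{\mu_A}}=\chi_A$. Because $\F_A$ is quasihomogeneous of degree $3-d=2$ on all terms beyond the quadratic ones, any monomial $e^{t_{\mu_A}}\prod_{s=1}^{N}t_{i_s,j_s}$ (where $N=\sum_k p_k\ge r>3$, rewriting the block product of the statement as a product of $N$ single factors) that occurs in $\F_A$ must satisfy, using $\chi_A=2-r+\sum_i 1/a_i$,
\begin{equation*}
\sum_{s=1}^{N}\left(1-\frac{j_s}{a_{i_s}}\right)=r-\sum_{i=1}^{r}\frac{1}{a_i}=\sum_{i=1}^{r}\left(1-\frac{1}{a_i}\right).
\end{equation*}
This weight identity immediately forces $\alpha_\bullet=1$ \emph{once} the index support is known to be exactly $\{1,\dots,r\}$ with each index used once, and it discards a large supply of monomials on weight grounds. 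It does not, however, pin down the support: for $A=(2,2,2,3)$ the identity is also solved by $e^{t_{\mu_A}}t_{1,1}t_{4,1}t_{4,2}^{3}$, whose first indices miss $2$ and $3$. Hence the genuine content is the support statement, and grading must be supplemented.

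The key additional tool is the $\wdvv$ (associativity) equation, which is available because $M$ is a Frobenius manifold. The initial data comes from condition (v): in the classical limit $t_1=t_{i,j}=e^{t_{\mu_A}}=0$ the algebra is $\CC[x_1,\dots,x_r]/(x_ix_j,\,a_ix_i^{a_i}-a_jx_j^{a_j})$, so the $q^0$ three-point functions $\p_{t_a}\p_{t_b}\p_{t_e}\F_A$ vanish whenever two distinct indices are paired ($x_ix_j=0$), and the unique socle direction is $\p_{t_{\mu_A}}$, with $x_i^{a_i}\propto a_1x_1^{a_1}$ for every $i$. Extracting the coefficient of $q^1=e^{t_{\mu_A}}$ from $\wdvv$ (pairing two factors with $\eta^{-1}$ from condition (ii)) turns it into a linear relation between the first-order ($q^1$) three-point functions and the classical ($q^0$) ones, since a single factor of $q$ cannot be distributed over both factors of a $\wdvv$ summand. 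I would then run an induction on the total degree $N$: each reduction step replaces one factor by a classical product, which vanishes the moment it would join two different indices, so a nonzero $q^1$ coefficient can only be assembled if the $N$ insertions are linked through the socle $\p_{t_{\mu_A}}$ in a way that forces every $x_i^{a_i}$ — hence every index $i$ — to appear. This is precisely the mechanism that kills the grading-allowed but vanishing monomials such as the one displayed above; the base case $N=r$ is treated by the explicit classical product together with a direct associativity computation, mirroring the $r=3$ analysis of Proposition~\ref{prop: r=3 q coefficient}.

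I expect the main obstacle to be exactly this $\wdvv$ induction: organizing the associativity relations so that each ``missing-index'' monomial is provably forced to vanish, and checking that no accidental cancellation among the various first-order contributions recreates a bad monomial at higher $N$. Once the support $\{i_1,\dots,i_r\}=\{1,\dots,r\}$ has been established, the deduction $\alpha_\bullet=1$ is routine from the weight identity above (since $\sum_i(p_i-1)(1-1/a_i)=0$ with each term nonnegative forces $p_i=1$ and $j=1$). In short, the grading and the final $\alpha_\bullet=1$ step are mechanical; the real difficulty is the support statement, where one must use associativity together with the vanishing $x_ix_j=0$ and the single-socle structure of condition (v) in an essential, non-grading way.
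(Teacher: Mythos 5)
A preliminary remark: the paper contains no proof of this proposition at all --- it is imported verbatim from \cite[Proposition 3.4]{shi:1} --- so your attempt can only be measured against the cited proof and against the paper's own closely parallel WDVV arguments (Proposition~\ref{proposition: condition (iv) in v}). Your overall plan --- the Euler-field weight identity, the observation that grading alone cannot suffice, and the appeal to associativity anchored at the limit algebra of condition (v) --- is the right family of ideas, and your counterexample $e^{t_{\mu_A}}t_{1,1}t_{4,1}t_{4,2}^{3}$ for $A=(2,2,2,3)$ correctly shows that the selection rule is not a grading statement.

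The genuine gap is in the inductive step, and it sits exactly where you flag ``the main obstacle''. Condition (v) controls only the three-point functions at the limit $t_\bullet=e^{t_{\mu_A}}=0$; it says nothing about higher correlators. When you differentiate WDVV enough times to reach $N\ge r$ insertions and then extract the coefficient of $q^{1}$, each summand is a product $[\,\cdot\,]_{q^{1}}\cdot[\,\cdot\,]_{q^{0}}$ in which the $q^{0}$ factor can carry arbitrarily many insertions, i.e.\ it is an unknown coefficient of $\F_A|_{t_1=e^{t_{\mu_A}}=0}$, not a structure constant of the limit algebra. Your reduction step (``each reduction step replaces one factor by a classical product, which vanishes the moment it would join two different indices'') tacitly assumes that these multi-point $q^{0}$ correlators vanish on mixed indices; that assumption is precisely condition (iv), which is not among the hypotheses here. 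Indeed, the paper's footnote records that an analogue of Proposition~\ref{prop: r=3 noQ coefficient} --- i.e.\ control of the $q^{0}$ part under such hypotheses --- is absent from the published version of \cite{shi:1}, which is why this paper has to establish condition (iv) separately, via Propositions~\ref{proposition: i--th sector of GW} and~\ref{proposition: condition (iv) in v}. So the induction as sketched does not close: a correct argument must either run a joint induction controlling the $q^{0}$ and $q^{1}$ coefficients simultaneously, or organize the WDVV extraction so that only origin data enters. Since this is the very step that must kill your own counterexample, and it is left as an expectation rather than an argument, the proposal is a plausible plan rather than a proof.
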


It follows that we have like in the section above
\begin{align}
    \left[ \prod_{i=1}^3 \prod_{l=1}^{n_i} t_{(i,l),1} e^{t_{\mu_A}} \right] \F = |G| \left[ \prod_{i=1}^r \left( v_{g_i^0,1} \right)^{n_i} e^{|G|v_{\mu_A}}\right] \F_G.
\end{align}
The right hand side coefficient is equal to $1/|G|$ by invariant sector axiom and Lemma~\ref{lemma: GW of the orbifolds covering}. 
This gives condition (vi) for $\F(\bt)$.

However we dont have an analogue of Proposition~\ref{prop: r=3 noQ coefficient} in \cite{shi:1}\footnote{one shoud note that it was present in some form in the preprint version of this paper}. In order to show condition (iv) we examine WDVV equation on $\F(\bt)$. The proof is divided into the following three cases:
\begin{itemize}
  \item[Case 1.] $G$ is arbitrary, there is index $i_0$ such that $a_{i_0} > 2$,
  \item[Case 2.] $A = (2,\dots,2)$ and $G \subseteq K_{i_0}$,
  \item[Case 3.] $A = (2,\dots,2)$ and $G = G^D = \langle(\epi[\frac{1}{2}],\epi[\frac{1}{2}],\epi[0]),(\epi[\frac{1}{2}],\epi[0],\epi[\frac{1}{2}]) \rangle$.
\end{itemize}

The difference between these cases is given by the algebra structure of $\overline{\A^*}(\fAp,G)$. It turns out to be very reach in the first case, however when $A$ is composed of $2$'s only this algebra structure turns out to be very simple encoding the pairing only. 

In what follows we use the \textit{correlators}:
  $$
    \langle t_{\alpha_1}, \dots, t_{\alpha_k} \rangle := \frac{\p^k \F(\bt)}{\p t_{\alpha_1} \dots \p t_{\alpha_k}}\mid_{\bt = \exp(t_{\mu_A}) = 0} \quad \in \CC.
  $$
Because the expression on the RHS is completely symmetric in $t_{\alpha_p}$ we will drop sometimes the commas on the LHS multiplying repeating indices.
We also use the following notation:
$$
  \wdvv(t_\alpha, t_\beta, t_\gamma, t_\delta) := \sum_{\sigma, \bar \sigma} \left(
  \frac{\p^3 \F}{\p t_\alpha \p t_\beta \p t_\sigma} \eta^{\sigma,\bar \sigma} \frac{\p^3 \F}{\p t_{\bar \sigma} \p t_\gamma \p t_\delta}
  - \frac{\p^3 \F}{\p t_\alpha \p t_\gamma \p t_\sigma} \eta^{\sigma,\bar \sigma} \frac{\p^3 \F}{\p t_{\bar \sigma} \p t_\beta \p t_\delta} \right).
$$

The following proposition holds true for all three cases.
\begin{proposition}\label{proposition: i--th sector of GW}
  Fix some numbers $p_1,p_2,p_3$ such that $1 \le p_i \le n_i$. Then for any positive $b_1$,$b_2$,$b_3$ such that at least two of them are non--zero, the function $\F(\bt) \mid_{t_1 = \exp(t_{\mu_A}) = 0}$ does not involve the term $t_{(1,p_1),j_1}^{b_1} t_{(2,p_2),j_2}^{b_2} t_{(3,p_3),j_3}^{b_3}$ for any indices $j_1,j_2,j_3$. 
  
  In correlators notations this reads:
  $$
    \left\langle t_{(1,p_1),j_1}^{b_1} t_{(2,p_2),j_2}^{b_2} t_{(3,p_3),j_3}^{b_3} \right\rangle = 0
  $$
\end{proposition}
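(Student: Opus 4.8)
The plan is to restate the claim as the vanishing, at $\bt=\exp(t_{\mu_A})=0$, of every \emph{mixed} correlator --- one into which at least two of the three distinct sectors $(1,p_1)$, $(2,p_2)$, $(3,p_3)$ enter --- and to prove it by induction on the total insertion number $B:=b_1+b_2+b_3$. The base cases are immediate from the conditions already verified. For $B=2$ the quantity is $\eta$ evaluated on two generators lying in different sectors, which vanishes because $\eta$ pairs nontrivially only generators within a single sector (Lemma~\ref{lemma: pairingTwist}), the cross-sector blocks being killed by the $G$-grading axiom. For $B=3$ the correlator is a structure constant of the $\CC$-algebra reached in the limit $\bt=\exp(t_{\mu_A})=0$, which by the extended $\Jac$ axiom and Corollary~\ref{proposition: oJac structure} is
\[
\CC[z_{1,1},\dots,z_{3,n_3}]\left/\left(z_{i,k}z_{j,l},\ a_iz_{i,k}^{a_i}-a_jz_{j,l}^{a_j}\right)\right.;
\]
here the product of two \emph{distinct} generators is zero, so any three-point function straddling two sectors vanishes.

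For the inductive step I would use the functional WDVV identity $\sum_\sigma F_{AB\sigma}\,\eta^{\sigma\bar\sigma}\,F_{\bar\sigma CD}=\sum_\sigma F_{AC\sigma}\,\eta^{\sigma\bar\sigma}\,F_{\bar\sigma BD}$, with $F_{XYZ}:=\partial_X\partial_Y\partial_Z\F$. Differentiating it $B-3$ times in the target directions and evaluating at $\bt=\exp(t_{\mu_A})=0$ yields, by the Leibniz rule, a linear relation among correlators in which the two extreme terms pair an order-$B$ correlator against a three-point structure constant while every intermediate term is a product of two correlators each of order strictly below $B$. Substituting the limiting algebra of Corollary~\ref{proposition: oJac structure} for the three-point factors rewrites the extreme terms as a finite combination of order-$B$ correlators --- the target together with a few ``relatives'' obtained by shifting one insertion between sectors --- whereas any intermediate term one of whose factors spans two sectors vanishes by the inductive hypothesis. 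Throughout I would use the quasihomogeneity axiom, which through $E\F|_{t_1=0}=2\F|_{t_1=0}$ forces the weight identity $\sum_{i}b_i\frac{a_i-j_i}{a_i}=2$ and thereby bounds the admissible degree of the internal index $\sigma$, so that the recursion involves only finitely many correlators; choosing the two split indices $t_\beta,t_\gamma$ in two different sectors then produces the relation I want.

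The step I expect to be the main obstacle is the control of the intermediate terms in which \emph{both} factors are supported on a single sector, since such lower-order single-sector correlators are generally nonzero and constitute the inhomogeneous part of the relation. Two features eliminate them. First, $\eta$ is sector-diagonal (Lemma~\ref{lemma: pairingTwist}), so the internal line forces its two endpoints into one common sector; together with a deliberate placement of the outer WDVV indices across the three sectors, this makes any purely single-sector factor clash with an outer index from a different sector and hence forbidden. Second, the $G$-grading and $\Aut$-invariance axioms supply a charge-conservation selection rule: expanding each $t_{(i,p_i),\cdot}$ as a Fourier combination of the twisted variables $v_{g_i^l,\cdot}$, a nonvanishing correlator requires the product of the attached elements of $G$ to be $\id$, and since an element common to $K_i$ and $K_j$ with $i\neq j$ fixes two coordinates and is therefore $\id$, the twisted contributions of distinct $K_i$ must cancel separately. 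These two facts confine the admissible internal indices and relatives so tightly that the differentiated WDVV relation collapses to the vanishing of the target mixed correlator. The family $A=(2,\dots,2)$, where the limiting algebra of Corollary~\ref{proposition: oJac structure} retains only the data of $\eta$, is the most delicate and I would verify its reduced relation by hand; but the same two ingredients apply, so the conclusion holds uniformly across the three cases.
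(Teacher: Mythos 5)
Your route is genuinely different from the paper's. For $b_1=b_2=b_3=1$ the paper argues exactly as you do, via the structure constants of $\overline{\A^*}(\fAp,G)$; but for all higher correlators it never runs a WDVV induction. Instead it sets all twisted variables $v_{g_i^l,j}$, $l\ge 1$, to zero --- in the $\bt$-coordinates this is the diagonal $t_{(i,1),j}=\dots=t_{(i,n_i),j}$ --- identifies the restricted potential with that of $(M_{\fAp}^\infty)^G$ by the invariant sector axiom, imports condition (iv) from the unorbifolded mirror Theorem~\ref{theorem: msUnorbifolded}, and concludes with the symmetry axiom. Your induction is modeled instead on the paper's proof of the \emph{within-sector} statement (Proposition~\ref{proposition: condition (iv) in v}), and for a target containing an insertion $t_{(i,k),j}$ with $j\ge 2$ your splitting step is sound: the three-point factors straddling two sectors vanish by Corollary~\ref{proposition: oJac structure}. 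Note, however, that you never invoke the invariant sector axiom, which is the engine of the paper's argument.

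The genuine gap is the case you postpone to a check ``by hand'': targets all of whose insertions have $j_i=1$ and $a_i=2$. By quasihomogeneity these are exactly the cross-sector quartics such as $t_{(i,p),1}^2t_{(i',p'),1}^2$, and they occur whenever $A$ contains two $2$'s attached to distinct $i$'s, not only in your Cases 2 and 3 (for instance $A=(2,2,6,6)$, coming from $A'=(4,4,6)$ with $G=K_3$, lies in Case 1). For such a target nothing can be split, and your two ingredients provably cannot close the argument: at $t_1=\exp(t_{\mu_A})=0$ the potential has the form $\tfrac12 t_1^2t_{\mu_A}+\tfrac{t_1}{4}\sum_s t_{s,1}^2+Q$ with $Q$ quartic in the weight-$\tfrac12$ variables, and the $q^0$-part of WDVV asserts only that the product $\p_s\ast\p_{s'}:=2\sum_u Q_{ss'u}\p_u$ is associative, which leaves the cross-sector coefficients of $Q$ free; charge conservation is vacuous for untwisted sectors ($n_i=1$), and sector-diagonality of $\eta$ adds nothing new. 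This is not a removable technicality: Section~\ref{section: elliptic E_7 G^D case} exhibits, for $A=(2,2,2,2)$, a second WDVV solution satisfying \emph{every} axiom (G-grading, $\Aut$-invariance, extended $\Jac$, and even the invariant sector constraint) in which $g_{0,2,2,0}=-\tfrac{1}{64}w(t_6/4)$, so the cross-sector quartic $t_{(1,2),1}^2t_{(2,1),1}^2$ survives at $q=0$ where $w=1$; this is precisely why the proposition excludes $A=(2,2,2,2)$. Since nothing in your sketch distinguishes $r=4$ from $r\ge5$, it would equally ``prove'' the statement in that false case. A correct treatment must use input your sketch omits: the invariant sector axiom together with Theorem~\ref{theorem: msUnorbifolded} (the paper's route), and, when a sector with $n_i\ge2$ is involved, relations at order $q^1$ anchored by the nonvanishing correlator of condition (vi), as in the paper's explicit analysis of Cases 2 and 3. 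Your closing claim that the same two ingredients apply uniformly across the three cases is therefore incorrect.
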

\begin{proof}
 If $b_1=b_2=b_3=1$ this correlator is defined by structure constants of $\overline{\A^*}(\fAp,G)$. It follows from Proposition~\ref{prop: structure constants of xijk} and Eq.\eqref{eq: xik1 xik2} that in this case it vanishes.
 
Assume at least one of $b_1,b_2,b_3$ is greater than $1$.
  In the potential $\F_G(\bv)$ set all the untwisted sector variables $v_{g_i^a,j}$ to zero. 
  In the coordinates $\bt$ the equality $v_{g_i,j} = \dots = v_{g_i^a,j} = \dots = v_{g_i^{n_i-1},j} = 0$ is equivalent to $t_{(i,1),j} = \dots = t_{(i,k),j} = \dots = t_{(i,n_i),j}.$
  By condition (d) of $M_{\fAp,G}^\infty$ and Mirror symmetry Theorem~\ref{theorem: msUnorbifolded} the function obtained defines a submanifold of the orbifold GW--theory and hence satisfies Condition~(iv). Due to symmetry axiom the proof follows.
\end{proof}

In order to complete the proof of Condition~(iv) it remains to show that the mixed terms involving the variables $t_{(i,k),j}$ with the same index $i$ and different indices $k$ do not appear in $\F(\bt) \mid_{t_1=\exp(t_{\mu_A}) = 0}$. 

\subsubsection{Case 1: there is $i_0$ such that $a_{i_0} > 2$, the group $G$ is arbitrary}

\begin{proposition}\label{proposition: condition (iv) in v}
  Fix some $1 \le i_0 \le 3$ such that $a_{i_0} > 2$.
  The potential $\F(\bt)$ expansion does not contain a terms $t_{(i_0,k_1),j_1} \dots t_{(i_0,k_p),j_p}$ such that not all $k_j$ are equal.
\end{proposition}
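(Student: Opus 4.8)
The plan is to translate the statement into the vanishing of correlators and to prove it by induction on the number of insertions. Writing a monomial supported in the sector $i_0$ as $\langle t_{(i_0,k_1),j_1}\cdots t_{(i_0,k_p),j_p}\rangle$, the assertion is that this correlator vanishes whenever the indices $k_1,\dots,k_p$ are not all equal. For the base case $p=3$ I would use the limiting algebra structure: by condition~(v) (equivalently Corollary~\ref{proposition: oJac structure}(c) and Proposition~\ref{prop: structure constants of xijk}), at $t_1=\exp(t_{\mu_A})=0$ the product satisfies $x_{i_0,k}\circ x_{i_0,k'}=0$ for $k\neq k'$, which is exactly Eq.~\eqref{eq: xik1 xik2}, while by condition~(ii) the metric $\eta$ pairs $t_{(i,k),j}$ only with $t_{(i,k),a_i-j}$ inside the same block $(i,k)$. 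Since every three-point correlator at the limit equals $\eta(\,\cdot\circ\cdot\,,\cdot)$, any three-point correlator carrying two distinct values of $k$ is forced to vanish, settling $p=3$.

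For the inductive step ($p\ge 4$) I would exploit the equations $\wdvv(t_\alpha,t_\beta,t_\gamma,t_\delta)=0$, aiming to isolate the target correlator on one side of a suitably differentiated relation. Among the $p$ insertions I choose two legs $a=t_{(i_0,k_1),j_1}$ and $b=t_{(i_0,k_2),j_2}$ with $k_1\neq k_2$, adjoin two auxiliary legs $c,d$ lying in a single block $(i_0,k_p)$ with $k_p\neq k_1$, and differentiate the remaining $p-3$ insertions; the hypothesis $a_{i_0}>2$ is precisely what provides enough levels $1\le j\le a_{i_0}-1$ in that block to make the auxiliary three-point vertex nonzero and to pin the contracted internal index. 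After restricting to $t_1=\exp(t_{\mu_A})=0$ and expanding both sides into products of correlators contracted through $\eta^{\sigma\bar\sigma}$, I would discard terms using three facts: the propagator is block-diagonal by condition~(ii), so the internal index stays inside a single block or runs through the pair $t_1,t_{\mu_A}$; products of different blocks vanish in the limit algebra, killing any factorization whose three-point vertex mixes two distinct blocks; and any factor that is a mixed correlator of length $<p$ vanishes by the induction hypothesis, cross-$i$ mixing being in any case excluded by Proposition~\ref{proposition: i--th sector of GW}. Because $a$ and $b$ lie in different blocks they always fall into the same vertex of the term $\F_{ab\sigma}\eta^{\sigma\bar\sigma}\F_{\bar\sigma cd}$, so every contribution except the one in which all differentiated legs remain with them is a mixed correlator of length $<p$ and drops out, leaving a nonzero multiple of the target correlator.

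The main obstacle is that, a priori, WDVV relates the unknown top-order mixed correlators to one another rather than to zero: on the opposite side, built from $\F_{ac\sigma}\eta^{\sigma\bar\sigma}\F_{\bar\sigma bd}$, the surviving contribution is again a length-$p$ mixed correlator, so a single relation yields only a linear identity among the quantities we wish to annihilate. To close the argument I would combine several WDVV relations and feed in the discrete symmetry of the index $k$: the $G$-grading axiom together with the $\Aut$-invariance axiom makes the potential invariant under the cyclic $\ZZ/n_{i_0}\ZZ$-action on the blocks $(i_0,1),\dots,(i_0,n_{i_0})$, which in the coordinates of \eqref{eq: change of variables t=t(v)} is the Fourier image of $v_{g_{i_0}^l,j}\mapsto g_{i_0}^l v_{g_{i_0}^l,j}$ and acts as $t_{(i_0,k),j}\mapsto t_{(i_0,k+1),j}$; this both imposes a charge-conservation congruence on $\sum_\ell k_\ell$ for the nonzero mixed correlators and identifies the correlators appearing on the two sides. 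The Euler field of condition~(i) together with condition~(iii) further restricts, through the degree identity $\sum_\ell (a_{i_0}-j_\ell)/a_{i_0}=2$, the finitely many correlators that can occur at a given order. Together these turn the WDVV identities into a finite homogeneous linear system whose only solution, given the $p=3$ base case, is the trivial one, yielding the vanishing of all mixed correlators and hence Proposition~\ref{proposition: condition (iv) in v}.
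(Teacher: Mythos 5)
Your overall strategy coincides with the paper's: induction on the number of insertions, base case $p=3$ from the limit algebra of Corollary~\ref{proposition: oJac structure} together with the block-diagonality of $\eta$, and an inductive step in which one leg $t_{(i_0,k),j}$ with $j\ge 2$ is split as $t_{(i_0,k),\kappa}$, $t_{(i_0,k),j-\kappa}$ inside a differentiated $\wdvv$ relation, so that the three-point vertex inside that block reconstitutes the target correlator while mixed three-point vertices and shorter mixed correlators are killed by the base case and the induction hypothesis. Up to that point your reductions are sound (one small imprecision: you need the auxiliary block to avoid \emph{both} $k_1$ and $k_2$, not only $k_1$, for the opposite side of $\wdvv$ to lose its three-point factor).

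The genuine gap is at the point you yourself flag as ``the main obstacle'', and your resolution of it does not work. First, the obstacle is not generic: whenever three pairwise distinct block indices can be placed in the four $\wdvv$ slots, every term of the opposite side $\F_{ac\sigma}\eta^{\sigma\bar\sigma}\F_{\bar\sigma bd}$ contains a three-point factor mixing two distinct blocks and vanishes, so $\wdvv$ yields the target $=0$ outright. The unknown-to-unknown coupling arises only when the monomial involves exactly two distinct values of $k$, because then two legs of the same block are forced into one vertex and $\langle t_{(i_0,k),j},t_{(i_0,k),j'},t_\gamma\rangle$ survives. For that case you propose that cyclic shift invariance and charge conservation turn the $\wdvv$ identities into a homogeneous linear system ``whose only solution is the trivial one'' --- but this non-degeneracy is precisely the statement to be proved, and the symmetries you invoke cannot supply it: the $\ZZ/n_{i_0}\ZZ$-shift only relates a mixed correlator to correlators of the same block pattern with relabelled blocks, and charge conservation does not exclude these monomials, so the system you describe is underdetermined (e.g.\ it relates a correlator of block pattern $(2,2)$ to one of pattern $(3,1)$ and stops there). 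The paper closes this case by a quasihomogeneity count that plays no such role in your argument: a degree-two correlator satisfies $\sum_\ell j_\ell=(p-2)a_{i_0}$, so with $a_{i_0}>2$ one can always arrange the $\wdvv$ slots so that the pair lying in a common block has $j$-sum at least $a_{i_0}$; then the obstructing vertex carries the factor $\delta_{j_1+j_2<a_{i_0}}=0$ of Step~1 of the paper's proof (for $j_1+j_2>a_{i_0}$ the product vanishes in the limit algebra, and for $j_1+j_2=a_{i_0}$ the contraction forces a $t_{\mu_A}$-insertion, which dies at $\exp(t_{\mu_A})=0$), and the relation collapses to target $=0$. Without this degree-counting step your induction does not close.
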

\begin{proof}
  For this proof we adopt the notation $t_{k,j} := t_{(i_0,k),j}$.
  We prove the proposition by using the induction on the length $p$ of the term $T := t_{(i_0,k_1),j_1} \dots t_{(i_0,k_p),j_p} $. 
  
  {\bf Step 0: Assume $p = 3$.} Then the statement follows by Corollary~\ref{proposition: oJac structure}.

  {\bf Step 1: Assume $p = 4$.} Let $T = t_{k_1,j_1}t_{k_2,j_2}t_{k_3,j_3}t_{k_4,j_4}$.
  From the quasihomogeneity condition there is at least one index $j_\bullet > 1$. Let it be $j_3$. Consider the derivate w.r.t. $\p/\p t_\sigma$ of $\wdvv(t_{k_1,j_1},t_{k_2,j_2},t_{k_3,\kappa},t_{k_3,j_3-\kappa})$ for some $\kappa < j_3$
  \begin{align*}
    &\sum_{\gamma,\bar\gamma} \Bigg( 
    \langle t_{k_1,j_1},t_{k_2,j_2},t_\sigma,t_\gamma \rangle \eta^{\gamma,\bar \gamma} \langle t_{\bar\gamma},t_{k_3,\kappa},t_{j_3-\kappa}\rangle 
    + \langle t_{k_1,j_1},t_{k_2,j_2},t_\gamma \rangle \eta^{\gamma,\bar \gamma} \langle t_{\bar\gamma},t_\sigma,t_{k_3,\kappa},t_{j_3-\kappa}\rangle \Bigg)
    \\
    &\quad =
    \sum_{\gamma,\bar\gamma} \Bigg(
    \langle t_{k_1,j_1},t_{k_3,\kappa},t_\sigma,t_\gamma \rangle \eta^{\gamma,\bar \gamma} \langle t_{\bar\gamma},t_{k_2,j_2},t_{k_3,j_3-\kappa}\rangle 
    + \langle t_{k_1,j_1},t_{k_3,\kappa},t_\gamma \rangle \eta^{\gamma,\bar \gamma} \langle t_{\bar\gamma},t_\sigma,t_{k_2,j_2},t_{k_3,j_3-\kappa}\rangle \Bigg)
    \\
    \Leftrightarrow
    \langle &t_{k_1,j_1},t_{k_2,j_2},t_\sigma,t_{k_3,j_3} \rangle  
    + \sum_{\gamma,\bar\gamma} \langle t_{k_1,j_1},t_{k_2,j_2},t_\gamma \rangle \eta^{\gamma,\bar \gamma} \langle t_{\bar\gamma},t_\sigma,t_{k_3,\kappa},t_{k_3,j_3-\kappa}\rangle
    \\
    &=
    \sum_{\gamma,\bar\gamma} \Bigg( \langle t_{k_1,j_1},t_{k_3,\kappa},t_\sigma,t_\gamma \rangle \eta^{\gamma,\bar \gamma} \langle t_{\bar\gamma},t_{k_2,j_2},t_{k_3,j_3-\kappa}\rangle 
    + \langle t_{k_1,j_1},t_{k_3,\kappa},t_\gamma \rangle \eta^{\gamma,\bar \gamma} \langle t_{\bar\gamma},t_\sigma,t_{k_2,j_2},t_{k_3,j_3-\kappa}\rangle \Bigg).
  \end{align*}
  If $k_1,k_2,k_3$ are pairwise distinct the last computation shows vanishing of the four-point correlator in question by taking $\sigma = (k_4,j_4)$. 

  We should consider now two more cases: case $1$ is when $k_1 = k_2 \neq k_3 = k_4$ and case $2$ when $k_1=k_2=k_3\neq k_4$. We have $2 a_{i_0} = j_1+j_2+j_3+j_4$ and both $j_1 + j_2$, $j_3+j_4$ can not be smaller than $a_{i_0}$. Assume $j_1+j_2 \ge a_{i_0}$. The WDVV expression above gives:
  \begin{align*}
    \langle t_{k_1,j_1},t_{k_2,j_2},t_\sigma,t_{k_3,j_3} \rangle  
    &= - \delta_{j_1+j_2 < a_{i_0}}\langle t_{k_1,j_1},t_{k_1,j_2},t_{k_1,a_{i_0}-j_1-j_2} \rangle \frac{1}{a_{i_0}} \langle t_{k_1,j_1+j_2},t_\sigma,t_{k_3,\kappa},t_{k_3,j_3-\kappa}\rangle
    \\
    & = - \delta_{j_1+j_2 < a_{i_0}} \cdot \langle t_{k_1,j_1+j_2},t_\sigma,t_{k_3,\kappa},t_{k_3,j_3-\kappa}\rangle = 0.
  \end{align*}
  It completes the proof for $p=4$ because we may put any $\sigma$ needed.
  
  {\bf Step 2: Assume $p > 4$.} We proceed by induction in $p$.
  Let the proposition be proved for all $p \le l$. We show it for $p = l+1$. Due to quasihomogeneity we can assume again $j_3 > 1$. Fix some $1 \le \kappa < j_3$ and set $I = \lbrace (k_4,j_4),\dots, (k_{l+1},j_{l+1})\rbrace$ of $\wdvv(t_{k_1,j_1},t_{k_2,j_2},t_{k_3,\kappa},t_{k_3,j_3-\kappa})$.
  
  Consider the derivate w.r.t. $\p^{l-2}/\p t_{k_4,j_4}\dots \p t_{k_{l+1},j_{l+1}}$ of $\wdvv(t_{k_1,j_1},t_{k_2,j_2},t_{k_3,\kappa},t_{k_3,j_3-\kappa})$
  \begin{align*}
    \sum_{\gamma,\bar\gamma} & \Bigg( \langle t_{k_1,j_1},t_{k_2,j_2},\bt_{I}, t_\gamma \rangle \eta^{\gamma,\bar \gamma} \langle t_{\bar\gamma},t_{k_3,\kappa},t_{k_3,j_3-\kappa}\rangle 
    + \langle t_{k_1,j_1},t_{k_2,j_2}, t_\gamma \rangle \eta^{\gamma,\bar \gamma} \langle t_{\bar\gamma},\bt_I,t_{k_3,\kappa},t_{k_3,j_3-\kappa}\rangle
    \\
    &\quad
    - \langle t_{k_1,j_1},t_{k_3,\kappa},\bt_I,t_\gamma \rangle \eta^{\gamma,\bar \gamma} \langle t_{\bar\gamma},t_{k_2,j_2},t_{k_3,j_3-\kappa}\rangle 
    - \langle t_{k_1,j_1},t_{k_3,\kappa},t_\gamma \rangle \eta^{\gamma,\bar \gamma} \langle t_{\bar\gamma},\bt_I,t_{k_2,j_2},t_{k_3,j_3-\kappa}\rangle \Bigg)
    \\
    =& 
    \sum_{\gamma,\bar\gamma} \sum_{\substack{I_A \sqcup I_A = I, \\ |I_A| \neq 0, |I_B| \neq 0}} 
     \Bigg( \langle t_{k_1,j_1},t_{k_3,\kappa},\bt_{I_A},t_\gamma \rangle \eta^{\gamma,\bar \gamma} \langle t_{\bar\gamma}, \bt_{I_B}, t_{k_2,j_2},t_{k_3,j_3-\kappa}\rangle 
    \\
     &\quad\quad -\langle t_{k_1,j_1},t_{k_2,j_2},\bt_{I_A}, t_\gamma \rangle \eta^{\gamma,\bar \gamma} \langle t_{\bar\gamma},\bt_{I_B},t_{k_3,\kappa},t_{k_3,j_3-\kappa}\rangle \Bigg).
  \end{align*}
  RHS vanishes by induction assumption and we get
  \begin{align*}
    \langle &t_{k_1,j_1},t_{k_2,j_2},\bt_I,t_{k_3,j_3} \rangle = 
    \sum_{\gamma,\bar\gamma}\Bigg( - \langle t_{k_1,j_1},t_{k_2,j_2}, t_\gamma \rangle \eta^{\gamma,\bar \gamma} \langle t_{\bar\gamma},\bt_I,t_{k_3,\kappa},t_{k_3,j_3-\kappa}\rangle
    \\
    &
    + \langle t_{k_1,j_1},t_{k_3,\kappa},\bt_I,t_\gamma \rangle \eta^{\gamma,\bar \gamma} \langle t_{\bar\gamma},t_{k_2,j_2},t_{k_3,j_3-\kappa}\rangle 
    + \langle t_{k_1,j_1},t_{k_3,\kappa},t_\gamma \rangle \eta^{\gamma,\bar \gamma} \langle t_{\bar\gamma},\bt_I,t_{k_2,j_2},t_{k_3,j_3-\kappa}\rangle \Bigg).
  \end{align*}
  If $k_1,k_2,k_3$ are pairwise different this gives the vanishing of the correlator needed by Step~0 above. If there is no triple of pairwide different indices $k_\alpha$, $k_\beta$, $k_\gamma$ the RHS in the expression above vanishes due to quasihomogeneity and Step~0.
\end{proof}

\subsubsection{Case 2: $A = (2,\dots,2)$ and $G \subseteq K_l$}
For all conditions of Theorem~\ref{thm:satisfies ISTr} expect condition (iv), proof of this case does not differ from the previous one. 
We skip it here because it is completely analogous. However we make use of a slightly different change of the variables.

Without loss of generality we can assume $l=3$. Let $n := n_l$. There are to possibilities: $A' = (2n,2n,2)$ with $G \subset K_3$ or order $n$ and $A' = (n,n,2)$ with $G = K_3$. We only show the proof for the first one becasue the proof of the second one is completely parallel.

Assume $A' = (2n,2n,2)$ with $G \subset K_3$ or order $n$.
We have
\begin{align}
    \F(\bt) &= \frac{1}{2}t_{1}^2 t_{\mu_A} + \frac{t_1}{4} \left( t_{1,1}^2 + t_{2,1}^2 + \sum_{k = 1}^{n} t_{3,n}^2 \right)
    + \sum t_{1,1}^{\alpha_1} t_{2,1}^{\alpha_2} t_{3,1}^{\beta_1} \dots t_{3,n}^{\beta_n} g_{\alpha,\beta}(t_{\mu_A})
\end{align}
for some $g_{\alpha,\beta}$. From symmetry axiom we have $g_{\alpha,\beta}(t_{\mu_A}) = g_{\alpha,\beta'}(t_{\mu_A})$ if $\beta'$ is obtained from $\beta$ by a permutation. Let 
\[
    g_{\alpha,\beta} = \sum_{k \ge 0} c_{\alpha,\beta; k} q^k, \quad q = \exp(t_{\mu_A}).
\]
From the quasihomogeneity condtion on $\F(\bt)$ we have $c_{\alpha,\beta;0} = 0$ unless $\sum_i \alpha_i + \sum_i \beta_i = 4$. By using the same argument as above we have $c_{\alpha,\beta; 1} \neq 0$ with $\sum_i \alpha_i + \sum_i \beta_i = n+2$ only if $\alpha_1 = \alpha_2=1$ and $\beta_1=\dots=\beta_n=1$. When this condition is satisfied, the coefficient in question is equal to $1$. It remains to show that condition (iv) holds to get Theorem~\ref{theorem: main}.

By invariant sector axiom (see also Corollary~3.8 in \cite{shi:1}) and Proposition~\ref{proposition: condition (iv) in v} we have for some constants $c_4$, $c_{2,2}$, $c_{1,3}$ and $c_{1,1,2}$
\begin{align}
    \F(\bt) &= \frac{1}{2}t_{1}^2 t_{\mu_A} + \frac{t_1}{4} \left( t_{1,1}^2 + t_{2,1}^2 + \sum_{k = 1}^{n} t_{3,n}^2 \right)
    - \frac{1}{96} \left( t_{1,1}^4 + t_{2,1}^4\right) + \frac{c_4}{4!} \sum_{k=1}^n t_{3,k}^4
    \\
    &+ \frac{1}{4} c_{2,2} \sum_{i \neq j} t_{3,i}^2 t_{3,j}^2 + \frac{1}{6} c_{1,3} \sum_{i \neq j} t_{3,i} t_{3,j}^3 + \frac{1}{2} c_{1,1,2} \sum_{i \neq j \neq k} t_{3,i} t_{3,j} t_{3,k}^2 + O(q).
\end{align}

Consider $\wdvv(t_{1,1},t_{2,1},t_{3,1},t_{3,k})$ with $k \neq 1$. Derivating it w.r.t. $t_{3,1}\dots t_{3,n}$ and assuming the coefficient of $q$ we get
\begin{align}
    0 &= \sum_{p=1}^n \langle t_{1,1},t_{2,1},t_{3,1},\dots \widehat{p},\dots, t_{3,n}, t_x \rangle \eta^{x,\overline{x}} \langle t_{\overline{x}}, t_{3,p}, t_{3,1}, t_{3,k} \rangle
    \\
    &= \langle t_{1,1},t_{2,1},t_{3,1},\dots, t_{3,n} \rangle \cdot 2 \cdot \sum_{p=1}^n \langle t_{3,p}, t_{3,p}, t_{3,1}, t_{3,k} \rangle
\end{align}
what gives $(n-2)c_{1,1,2} + 2 c_{1,3} = 0$. Derivating the same WDVV w.r.t. Derivating it w.r.t. $t_{3,2},t_{3,2},t_{3,3}\dots t_{3,n}$ and assuming the coefficient of $q$ we get
\begin{align}
    0 &= \langle t_{1,1},t_{2,1},t_{3,2},\dots t_{3,n}, t_x \rangle \eta^{x,\overline{x}} \langle t_{\overline{x}}, t_{3,2}, t_{3,1}, t_{3,2} \rangle
    \\
    &= \langle t_{1,1},t_{2,1},t_{3,1},\dots, t_{3,n} \rangle \cdot 2 \cdot \langle t_{3,1}, t_{3,1}, t_{3,2}, t_{3,2} \rangle
\end{align}
giving us $c_{2,2} = 0$.

Consider $\wdvv(t_{1,1},t_{2,1},t_{3,1},t_{3,1})$. Derivating it w.r.t. $t_{3,2},t_{3,2},t_{3,3}\dots t_{3,n}$ and assuming the coefficient of $q$ we get
\begin{align}
    0 &= \langle t_{1,1},t_{2,1},t_{3,2},\dots , t_{3,n}, t_x \rangle \eta^{x,\overline{x}} \langle t_{\overline{x}}, t_{3,1}, t_{3,1}, t_{3,2} \rangle
    \\
    &= \langle t_{1,1},t_{2,1},t_{3,1}t_{3,2},\dots , t_{3,n}\rangle \cdot 2 \cdot \langle t_{3,1}, t_{3,1}, t_{3,1}, t_{3,2} \rangle
\end{align}
what gives $c_{1,3} = 0$.
Derivating the same WDVV expression w.r.t. $t_{3,1}\dots t_{3,n}$ we get
\begin{align}
    0 &= \langle t_{1,1},t_{2,1},t_{3,1},\dots , t_{3,n}, t_x \rangle \eta^{x,\overline{x}} \langle t_{\overline{x}}, t_{3,1}, t_{3,1}\rangle 
    \\
    &+ \sum_{p=1}^n \langle t_{1,1},t_{2,1},t_{3,1},\dots \widehat{p} \dots , t_{3,n}, t_x \rangle \eta^{x,\overline{x}} \langle t_{\overline{x}}, t_{3,p}, t_{3,1}, t_{3,1} \rangle
\end{align}
what gives $\frac{1}{2} + 2 c_{0,4} + 2(n-1) c_{2,2} = 0$.

Assuming all the equations written we get $c_{0,4} = -1/4$ and $c_{1,1,2} = c_{2,2} = c_{1,3} = 0$ and the potential obtined satisfies condtion (iv) of Theorem~\ref{theorem: yuuki--GW}. This completes the proof of Theorem~\ref{theorem: main} in this case.

\subsubsection{Case 3: $A = (2,\dots,2)$ and $G = G^D$}
In this case we could only have $A = (2,2,2,2,2,2)$ and $\fAp = x_1^4 + x_2^4 + x_3^4 - s_{\mu}^{-1} x_1x_2x_3$.
Let $g$ and $h$ be the generators of $G^D$. Then we have $G^D = \{1,g,h,gh\}$. Writing the potential $\F_G(\bv)$ in coordinates we get a rather particular expression by using the axioms we introduce.

The coordinate change assumes simple form
\begin{align}
    t_{(1,1),1} &= v_{g^0,1} + v_{g,1}, \ t_{(1,2),1} = v_{g^0,1} - v_{g,1},
    \\
    t_{(2,1),1} &= v_{h^0,1} + v_{h,1}, \ t_{(2,2),1} = v_{h^0,1} - v_{h,1},
    \\
    t_{(3,1),1} &= v_{(gh)^0,1} + v_{gh,1}, \ t_{(3,2),1} = v_{(gh)^0,1} - v_{gh,1}.
\end{align}

It follows from axiom~(b), symmetry axiom and Proposition~\ref{proposition: i--th sector of GW} that the potential $\F_G(\bt)$ has the form
\begin{align}
    \F_G(\bt) &= \frac{1}{2} t_1^2t_{8} + \frac{1}{4} \sum_{i=1}^3 \left( t_{(i,1),1}^2 + t_{(i,2),1}^2 \right) + \sum_{i=1}^3 c_{1,i}\left( t_{(i,1),1}^4 + t_{(i,2),1}^4 \right)
    \\
    & + \sum_{i=1}^3 \left( \frac{c_{2,i}}{6} ( t_{(i,1),1} t_{(i,2),1}^3 + t_{(i,1),1}^3 t_{(i,2),1}) + \frac{c_{3,i}}{4} t_{(i,1),1}^2 t_{(i,2),1}^2  \right) + q \cdot \phi(\bt) + O(q^2),
\end{align}
where $q = \exp(t_{\mu})$. One gets from invariant sector axiom that $c_{1,i} = - 1 / 96$. From Proposition~\ref{prop: higher r q coefficient} we have $\phi(\bt) = \prod_{i=1}^3\prod_{l=1}^2 t_{(i,l),1}$. Taking the coefficient of $q$ in $\wdvv(t_{(1,1),1},t_{(1,1),1},t_{(2,1),1},t_{(2,2),1})$ we get
\[
 2 c_{2,1} t_{(3,1),1} t_{(3,2),1} t_{(1,1),1}^2+\frac{8}{3} c_{3,1} t_{(1,2),1} t_{(3,1),1} t_{(3,2),1} t_{(1,1),1}+2 c_{2,1} t_{(1,2),1}^2 t_{(3,1),1} t_{(3,2),1} = 0
\]
that should hold in polynomial ring and therefore we have both $c_{2,1} = 0$ and $c_{3,1} = 0$. 

\section{Examples}\label{section: examples}
In this section we prove case by case Theorem~\ref{theorem: main} for the pairs $(\fAp,G)$, s.t. $A=(2,2,2)$ or $A = (2,2,2,2)$. 
Conditions (i),(ii),(iii) and (v) hold for these pairs too by Section~\ref{section: MS proof part 1} and we focus on conditions (iv) and (vi).

Our strategy is the following. First write the potential $\F_{(\fAp,G)}(\bt)$ in the flat coodinates introduced in Eq.\eqref{eq: change of variables t=t(v)}. Quasihomogeneity and expansion axioms assure that the potential can be expanded in a series in $\bt$ and $\exp(t_{\mu_{A}})$. Some of the coefficients of this series expansion can further be identified by $\Aut$--invariance and $G$--grading axioms. 

Next we compute the functions $\F_\fAp^G$ and $\F_{(\fAp,G)}^\id$, first being the restriction of $\F_{\fAp}$ to the $G$--invariant locus and second the restriction of $\F_{(\fAp,G)}$ to identity sector. By identity sector axiom we have $\F_\fAp^G(\bt') = |G|\F_{(\fAp,G)}^\id(\bt')$ for some variables $\bt'$.

Finally we solve WDVV equation.

\subsection{The weight set $A = (2,2,2)$}
We should consider $A' = (2,3,3)$ with $G = K_1$ and $A' = (2,2,4)$ with $G = K_1$. For all three cases the following holds.

Due to quasihomogeneity and expansion axioms we see that after some reindexing of the variables $\F_{\fAp,G}(\bt)$ is a polynomial in $t_1,t_{1,1},t_{2,1},t_{3,1}$ and $t_{\mu_A}=t_5$. We have for some complex numbers $c_\alpha = c_{\alpha_1,\alpha_2,\alpha_3,\alpha_4}$
\begin{align}
    \F_{\fAp,G}(\bt) &= \frac{1}{2} t_{5} t_1^2 + \frac{1}{4} t_1 \left(t_{1,1}^2+t_{2,1}^2+t_{3,1}^2\right) 
    + \sum c_{\alpha} t_{1,1}^{\alpha_1} t_{2,1}^{\alpha_2} t_{3,1}^{\alpha_3} e^{\alpha_4 t_5},
\end{align}
the summation being taken over all non--negative integers $\alpha_\bullet$, such that ${\alpha_1+\alpha_2+\alpha_3+\alpha_4 = 4}$. 
From the algebra structure at the origin we know that $c_{\alpha} = 0$ if $\alpha_4 =0$ and at least two of $\alpha_1,\alpha_2,\alpha_3$ are non--zero. 

\subsubsection{$A' = (2,3,3)$ with $G = K_1$}
Due to the $\Aut$--invariance axiom we have $c_{\alpha_1,\alpha_2,\alpha_3,\alpha_4} = c_{\alpha_2,\alpha_1,\alpha_3,\alpha_4} = c_{\alpha_3,\alpha_2,\alpha_1,\alpha_4}$.
We write $\F_{\fAp,G}$ in the coordinates $t_{1,1} = t_{(1,1),1}$, $t_{2,1} = t_{(1,2),1}$, $t_{3,1} = t_{(1,3),1}$.

By using mirror theorem \ref{theorem: msUnorbifolded} we have $\F_{A'} = \F_{\PP^1_{A'}}$. The latter potential can be found in \cite{rossi}. 
It is a function of $t_1,t_5$ and $t_{1,1}$, $t_{2,1}$, $t_{2,2}$, $t_{3,1}$, $t_{3,2}$.
Restriction of $\F_{A'}$ to the fixed locus of the group action and also restriction to the $\id$--sector of $\F_{\fAp,G}$ read
\begin{align}
    \F_{A'}^G &= -\frac{t_{1,1}^4}{96}+\frac{1}{3} e^{3 t_5} t_{1,1}^3+\frac{1}{2} e^{6 t_5} t_{1,1}^2+\frac{1}{4} t_{1,1} t_{1,1}^2+\frac{1}{12} e^{12 t_5}+\frac{1}{2} t_5 t_{1,1}^2
    \\
    \F_{\fAp,G}^\id &= 3 t_{1,1}^4 c_{0,0,4,0}+3 e^{t_5} t_{1,1}^3 c_{0,0,3,1}+e^{t_5} t_{1,1}^3 c_{1,1,1,1}+6 e^{t_5} t_{1,1}^3 c_{2,0,1,1}
    \\
    & +3 e^{2 t_5} t_{1,1}^2 c_{0,0,2,2}+3 e^{2 t_5} t_{1,1}^2 c_{1,0,1,2}+3 e^{3 t_5} t_{1,1} c_{0,0,1,3}+e^{4 t_5} c_{0,0,0,4}
    \\
    &+\frac{3}{4} t_1 t_{1,1}^2+\frac{1}{2} t_5 t_1^2.
\end{align}
Equating $\F_{\fAp,G}^\id(t_1,t_{1,1},t_5) = 3 \cdot \F_{A'}^G (t_1,t_{1,1},t_5/3)$ we have
\begin{align}
    & c_{0,0,0,4} = \frac{1}{4}, \ c_{0,0,1,3}= 0, \ c_{0,0,4,0} = -\frac{1}{96}, 
    \\
    & c_{1,0,1,2} = \frac{1}{2}-c_{0,0,2,2}, \ c_{2,0,1,1} = -\frac{1}{2} c_{0,0,3,1} - \frac{1}{6} c_{1,1,1,1}+\frac{1}{6}.
\end{align}
Finally the only unknown coefficients in $\F_{\fAp,G}$ remain $c_{0,0,2,2},c_{0,0,3,1},c_{1,1,1,1}$. They are resolved by WDVV equation to be
\[
    c_{0,0,2,2} = \frac{1}{2}, \ c_{0,0,3,1} = 0,c_{1,1,1,1} = 1,
\]
giving us exactly the potential of $\F_{\PP^1_{2,2,2}}$. 

In the $\bv$ coordinates the potential reads
\begin{align*}
 \F_{\fAp,K_1}(\bv) &= \frac{q^{12}}{12} + \left(v_{g^2,1} v_{g^3,1}+\frac{v_{g,1}^2}{2}\right) q^{6}+\frac{1}{3} \left(v_{g^3,1}^3+v_{g^2,1}^3-3 v_{g^2,1} v_{g^3,1} v_{g,1}+v_{g,1}^3\right) q^{3}
 \\
 & -\frac{1}{96} \left(12 v_{g^2,1} v_{g^3,1} v_{g,1}^2 + 4 v_{g^3,1}^3v_{g,1} + 4v_{g^2,1}^3v_{g,1} + 6 v_{g^2,1}^2 v_{g^3,1}^2 + v_{g,1}^4\right)
 \\
 &+\frac{1}{4} \left(2 v_{g^2,1} v_{g^3,1}+v_{g,1}^2\right) v_{\text{id},1} +\frac{1}{2} v_{\text{id},1}^2 v_{\text{id},5}
\end{align*}
with $q = \exp(v_{\text{id},5})$.

\subsubsection{$A' = (2,2,4)$ with $G = K_1$}
Due to the symmetry $\Aut$--invariance axiom we have $c_{\alpha_1,\alpha_2,\alpha_3,\alpha_4} = c_{\alpha_2,\alpha_1,\alpha_3,\alpha_4}$.
We write $\F_{\fAp,G}$ in the coordinates $t_{1,1} = t_{(1,1),1}$, $t_{2,1} = t_{(1,2),1}$, $t_{3,1} = t_{(3,1),1}$.

By using mirror theorem \ref{theorem: msUnorbifolded} we have $\F_{A'} = \F_{\PP^1_{A'}}$. The latter potential can be found in \cite{rossi}. It is a function of $t_1, t_{1,1}, t_{2,1},t_{3,1},t_{3,2}, t_{3,3}$. Restriction of $\F_{A'}$ to the fixed locus of the group action and also restriction to the $\id$--sector of $\F_{\fAp,G}$ read
\begin{align}
    \F_{A'}^G &= \frac{e^{8 t_5}}{8} +\frac{e^{4 t_5}}{4} ( 2t_{1,1}^2 + t_{3,2}^2) + \frac{e^{2 t_5}}{2} t_{3,2} t_{1,1}^2 
    -\frac{t_{1,1}^4}{96} -\frac{t_{3,2}^4}{192} +\frac{t_5 t_1^2}{2} +\frac{t_1}{8} (2t_{1,1}^2 + t_{3,2}^2)
    \\
    \F_{\fAp,G}^\id &= 2 t_{1,1}^4 c_{0,4,0,0} +2 e^{t_5} t_{1,1}^3 c_{0,3,0,1}+2 e^{t_5} t_{1,1}^3 c_{1,2,0,1} +2 e^{2 t_5} t_{1,1}^2 c_{0,2,0,2} 
    \\
    &+e^{2 t_5} t_{1,1}^2 c_{1,1,0,2}+2 e^{t_5} t_{3,2} t_{1,1}^2 c_{0,2,1,1} +e^{t_5} t_{3,2} t_{1,1}^2 c_{1,1,1,1} +2 e^{t_5} t_{3,2}^2 t_{1,1} c_{0,1,2,1}
    \\
    & +2 e^{3 t_5} t_{1,1} c_{0,1,0,3} +2 e^{2 t_5} t_{3,2} t_{1,1} c_{0,1,1,2}+t_{3,2}^4 c_{0,0,4,0}+e^{t_5} t_{3,2}^3 c_{0,0,3,1}
    \\
    &+e^{2 t_5} t_{3,2}^2 c_{0,0,2,2}+e^{4 t_5} c_{0,0,0,4}+e^{3 t_5} t_{3,2} c_{0,0,1,3}+\frac{1}{2} t_5 t_1^2+\frac{1}{4} t_1 \left(2 t_{1,1}^2+t_{3,2}^2\right).
\end{align}
Equating $\F_{\fAp,G}^\id(t_1,t_{1,1},t_{3,2},t_5) = 2 \cdot \F_{A'}^G (t_1,t_{1,1},t_{3,2},t_5/2)$ we get
\begin{align}
    & c_{0,0,0,4} =  \frac{1}{4}, \ c_{0,0,1,3} = 0, \ c_{0,0,2,2} = \frac{1}{2}, \ c_{0,0,3,1} = 0, \ c_{0,0,4,0} = -\frac{1}{96},
    \\
    & c_{0,1,0,3} = 0, \ c_{0,1,1,2} = 0, \ c_{0,1,2,1} = 0, \ c_{0,4,0,0} = -\frac{1}{96}, \ c_{1,1,0,2} = 1-2 c_{0,2,0,2},
    \\
    & c_{1,1,1,1} = 1-2 c_{0,2,1,1}, \ c_{1,2,0,1} = -c_{0,3,0,1}.
\end{align}
The remaining unknown coefficients are resolved from WDVV equation giving
\[
    c_{0,2,0,2} = \frac{1}{2}, \ c_{0,2,1,1} = 0, \ c_{0,3,0,1} = 0,
\]
so that $\F_{\fAp,G} = \F_{\PP^1_A}$.

In the $\bv$ coordinates the potential reads
\begin{align*}
 \F_{\fAp,K_1}(\bv) &= \frac{q^8}{8} + \left(\frac{1}{2} v_{g^2,1}^2+\frac{v_{g,1}^2}{2}+\frac{v_{\text{id},x}^2}{4}\right)q^4 
 + \left(\frac{1}{2} v_{g,1}^2 v_{\text{id},x}-\frac{1}{2} v_{g^2,1}^2 v_{\text{id},x}\right)q^2 
 \\
 & -\frac{1}{96} v_{g^2,1}^4-\frac{1}{16} v_{g,1}^2 v_{g^2,1}^2-\frac{v_{g,1}^4}{96}  -\frac{v_{\text{id},x}^4}{192}
 \\
  &+v_{\text{id},1} \left(\frac{1}{4} v_{g^2,1}^2+\frac{v_{g,1}^2}{4}+\frac{v_{\text{id},x}^2}{8}\right)
 +\frac{1}{2} v_{\text{id},1}^2 v_{\text{id},5}
\end{align*}
with $q = \exp(v_{\text{id},5})$.

\subsection{The weight set $A = (2,2,2,2)$}
We should consider $A' = (4,4,2)$ with $G = G^D$, $A' = (4,4,2)$ with $G = K_3$ and $A' = (6,3,2)$ with $G = K_3$. For all three cases the following holds.

Due to quasihomogeneity and expansion axioms after the suitable reindexing of the variables, $\F_{\fAp,G}(\bt)$ is a polynomial in $t_1$, $t_{1,1}$, $t_{2,1}$, $t_{3,1}$, $t_{4,1}$ and power series in $e^{t_6}$. 
We have for some functions $g_\alpha = g_{\alpha_1,\alpha_2,\alpha_3,\alpha_4}(t_{6})$
\begin{align}
    \F_{\fAp,G}(\bt) &= \frac{1}{2} t_6 t_1^2 + \frac{1}{4} t_1 \sum_{k=1}^4 t_{k,1}^2 
    + \sum t_{1,1}^{\alpha_1} t_{2,1}^{\alpha_2} t_{3,1}^{\alpha_3} t_{4,1}^{\alpha_4} g_{\alpha}(t_{6}),
\end{align}
the summation being taken over all non--negative integers $\alpha_\bullet$, such that ${\alpha_1+\alpha_2+\alpha_3+\alpha_4 = 4}$. 
The algebra structure at the origin is very simple, in this case it doesn't give any requirements on $g_\alpha(t_6)$ for any $\alpha$. 

For the later use consider also the functions
\begin{align}
 &\eta (q)=q^{1/24} \prod _{k=1}^{\infty} \left(1-q^k\right), \quad E_2(q)=1-24 \sum _{k=1}^{\infty } \frac{k q^k}{1-q^k},
 \\
 &\theta _2(q)=2 \sum _{k=0}^{\infty } q^{\frac{1}{2} \left(k+\frac{1}{2}\right)^2}, \quad \theta _3(q)=1 + 2 \sum _{k=1}^{\infty } q^{\frac{k^2}{2}}, 
 \\
 &\quad\quad \theta _4(q)=1 + 2 \sum _{k=1}^{\infty } (-1)^k q^{\frac{k^2}{2}}.
\end{align}
We will use them to express Gromov--Witten Frobenius manifold potentials of elliptic orbifolds. We will also write $\eta(t) = \eta(q)$, $E_2(t) = E_2(q)$ and $\theta_k(t) = \theta_k(q)$
being obtained by the change of the variables $q = \exp(t)$.

\subsubsection{$A' = (4,4,2)$ with $G = G^D$}\label{section: elliptic E_7 G^D case}
Write $\F_{\fAp,G}(\bt)$ in the coordinates $t_{1,1} = t_{(1,1),1}$, $t_{2,1} = t_{(1,2),1}$, $t_{3,1} = t_{(2,1),1}$, $t_{4,1} = t_{(2,2),1}$.

Due to $\Aut$--invariance axiom we have $g_{\alpha_1,\alpha_2,\alpha_3,\alpha_4} = g_{\alpha_3,\alpha_4,\alpha_1,\alpha_2}$.
By using mirror theorem \ref{theorem: msUnorbifolded} we have $\F_{A'} = \F_{\PP^1_{A'}}$. The latter potential can be found in \cite{BP18,B15} (see also \cite{SZ17} for the particular correlators). 
In this section denote
\begin{align}
    & x(t) = \left( \theta_3(4t) \right)^2, \ y(t) = \left(\theta_2(8 t) \right)^2,
    \\
    & w(t) = \frac{1}{3} \left( E_2(4t) - 2 E_2(8t) + 4 E_2(16t) \right).
\end{align}
We have for $q = \exp(t)$
\begin{align}
    x(q) &= 1+4 q^4+4 q^8+4 q^{16}+8 q^{20}+ +O\left(q^{27}\right),
    \\
    y(q) &= 4 q^2+8 q^{10}+4 q^{18}+8 q^{26}+ +O\left(q^{27}\right),
    \\
    w(q) &= 1-8 q^4-8 q^8-32 q^{12}-40 q^{16}-48 q^{20}-32 q^{24} +O\left(q^{27}\right).
\end{align}

Restriction of $\F_{A'}$ to the fixed locus of the group action is given by setting $t_{1,1} = t_{1,3} =0$, $t_{2,1} = t_{2,3} =0$ and $t_{3,1} = 0$. Denote also $t_{1,2} = t_2$ and $t_{2,2} = t_3$.

Restriction to the $\id$--sector of $\F_{\fAp,G}$ is then given by setting $t_{1,1} = t_{2,1} = t_1$ and $t_{3,1} = t_{4,1} = t_2$. We have
\begin{align}
    \F_{A'}^G &= \frac{1}{384} (t_2^4 + t_3^4) \left(-3 w\left(t_6\right)+x\left(t_6\right){}^2-2 y\left(t_6\right){}^2\right) 
    \\
    &+\frac{1}{64} t_3^2 t_2^2 \left(x\left(t_6\right){}^2-w\left(t_6\right)\right)+\frac{1}{2} t_6 t_1^2+\left(\frac{t_2^2}{8}+\frac{t_3^2}{8}\right) t_1,
    \\
    \F_{\fAp,G}^\id &= t_2^4 g_{0,0,2,2} \left(t_6\right) +2 t_2^4 g_{0,4,0,0}\left(t_6\right) +2 t_2^4 g_{1,3,0,0}\left(t_6\right) +4 t_3 t_2^3 g_{0,3,1,0}\left(t_6\right)
    \\
    &+4 t_3 t_2^3 g_{1,2,1,0}\left(t_6\right)+4 t_3^2 t_2^2 g_{0,2,1,1}\left(t_6\right)+4 t_3^2 t_2^2 g_{0,2,2,0}\left(t_6\right)+t_3^2 t_2^2 g_{1,1,1,1}\left(t_6\right)
    \\
    &+4 t_3^3 t_2 g_{0,3,1,0}\left(t_6\right)+4 t_3^3 t_2 g_{1,2,1,0}\left(t_6\right)+t_3^4 g_{0,0,2,2}\left(t_6\right)+2 t_3^4 g_{0,4,0,0}\left(t_6\right)
    \\
    &+2 t_3^4 g_{1,3,0,0}\left(t_6\right)+\frac{1}{2} t_6 t_1^2+\frac{1}{4} t_1 \left(2 t_2^2+2 t_3^2\right)
\end{align}
Equating $\F_{\fAp,G}^\id(t_1,t_2,t_3,t_6) = 4 \cdot \F_{A'}^G (t_1,t_2,t_3,t_6/4)$ we get
\begin{align*}
    & g_{1,1,1,1}\left(t_6\right) = -4 g_{0,2,1,1}\left(t_6\right)-4 g_{0,2,2,0}\left(t_6\right)-\frac{1}{16} w\left(\frac{t_6}{4}\right)+\frac{1}{16} x\left(\frac{t_6}{4}\right),
    \\
    & g_{1,2,1,0}\left(t_6\right) = -g_{0,3,1,0}\left(t_6\right),
    \\
    & g_{1,3,0,0}\left(t_6\right) = -\frac{1}{2} g_{0,0,2,2}\left(t_6\right)-g_{0,4,0,0}\left(t_6\right) 
    -\frac{1}{64} w\left(\frac{t_6}{4}\right)
    +\frac{1}{192} x\left(\frac{t_6}{4}\right)^2
    -\frac{1}{96} y\left(\frac{t_6}{4}\right)^2
\end{align*}
and we are left with $5$ unknown functions 
$g_{0,0,2,2}\left(t_6\right)$, $g_{0,2,1,1}\left(t_6\right)$, $g_{0,2,2,0}\left(t_6\right)$, $g_{0,3,1,0}\left(t_6\right)$ and $g_{0,4,0,0}\left(t_6\right)$.

WDVV equation can be solved explicilty giving two solutions:
\begin{align}
    & g_{0,0,2,2}\left(t_6\right) = -\frac{1}{64} w\left(\frac{t_6}{4}\right)+\frac{1}{64} x\left(\frac{t_6}{4}\right)^2-\frac{1}{64} y\left(\frac{t_6}{4}\right)^2, \ g_{0,2,1,1}\left(t_6\right) = 0,
    \\
    & g_{0,2,2,0}\left(t_6\right) = -\frac{1}{64} w\left(\frac{t_6}{4}\right)+\frac{1}{64} x\left(\frac{t_6}{4}\right)^2-\frac{1}{64} y\left(\frac{t_6}{4}\right)^2, \ g_{0,3,1,0}\left(t_6\right) = 0,
    \\
    &g_{0,4,0,0}\left(t_6\right) = -\frac{1}{128} w\left(\frac{t_6}{4}\right)-\frac{1}{384} x\left(\frac{t_6}{4}\right)^2-\frac{1}{384} y\left(\frac{t_6}{4}\right)^2,
\end{align}
and
\begin{align}
    & g_{0,0,2,2}\left(t_6\right) = 
    -\frac{1}{64} w\left(\frac{t_6}{4}\right), \ g_{0,2,1,1}\left(t_6\right) = 0,
    \\
    & g_{0,2,2,0}\left(t_6\right) = -\frac{1}{64} w\left(\frac{t_6}{4}\right), \ g_{0,3,1,0}\left(t_6\right) = 0,
    \\
    & g_{0,4,0,0}\left(t_6\right) = -\frac{1}{128} w\left(\frac{t_6}{4}\right)+\frac{1}{192} x\left(\frac{t_6}{4}\right)^2-\frac{1}{96} y\left(\frac{t_6}{4}\right)^2.
\end{align}
It is important to note that all the functions above have Fourier series expansions in $\exp(t_6)$.

The first solution gives
\begin{align}
    \F_{\fAp,G^D} &= \frac{1}{2} t_{6} t_1^2 + \frac{1}{4} t_1 \sum_{k=1}^4 t_{k,1}^2 
    \\
    &-\frac{1}{96} \left(t_{1,1}^4+t_{1,2}^4+t_{2,1}^4+t_{2,2}^4\right)  +  t_{1,1} t_{1,2} t_{2,1} t_{2,2} e^{t_6} + O(q^2).
\end{align}
It follows that the potential obtained satisfies all conditions of Theorem~\ref{theorem: yuuki--GW} and therefore coincides with the potential $\F_{\PP^1_A}$. This proves Theorem~\ref{theorem: main} for this particular pair $(\fAp, G)$.

The second solution also gets the form as above after the linear change of the variables $t_{1,1} \to (t_{4,1}+t_{3,1})/\sqrt{2}$, $t_{1,2} \to (t_{4,1}-t_{3,1})\sqrt{2}$, $t_{2,1} \to (t_{1,1} + t_{2,1})\sqrt{2}$ and $t_{2,2} \to (t_{1,1}-t_{2,1})\sqrt{2}$.

\begin{remark}
It is important to note that if one does not impose the $\Aut$--invariance axiom, one still can solve WDVV equation in terms of unknown functions $g_\alpha(t_6)$. However in this case one two more solutions in addition to those listed above. The additional two solutions can be written explicilty in terms of functions $x(t_6)$, $y(t_6)$ and $w(t_6)$, but they define Frobenius manifolds that are not isomorphic to $M_{\PP^1_A}$. In particular, the corresponding Frobenius potentials will not be symmetric in $t_{1,1}$ and $t_{2,1}$. 
\end{remark}

We can now write the potential explicilty
\begin{align}
    \F_{\widetilde E_7,G^D}^\infty(\bv) &= -\frac{1}{128} w \left(v_{g,1}^2+v_{h,1}^2+v_{g^0,1}^2+v_{h^0,1}^2\right){}^2
    \\
    &+\frac{1}{384} x^2 \Big(6 v_{g^0,1}^2 \left(-v_{g,1}^2+v_{h,1}^2+v_{h^0,1}^2\right)+6 v_{h^0,1}^2 \left(v_{g,1}^2-v_{h,1}^2\right)
    \\
    &\quad\quad+6 v_{g,1}^2 v_{h,1}^2+v_{g,1}^4+v_{h,1}^4+v_{g^0,1}^4+v_{h^0,1}^4\Big)
    \\
    &+\frac{1}{192} y^2 \left(-6 v_{g,1}^2 v_{h^0,1}^2-v_{g,1}^4-6 v_{h,1}^2 v_{g^0,1}^2-v_{h,1}^4-v_{g^0,1}^4-v_{h^0,1}^4\right)
    \\
    &+ v_{\text{id},1} \left(\frac{v_{g,1}^2}{8}+\frac{v_{h,1}^2}{8}+\frac{v_{g^0,1}^2}{8}+\frac{v_{h^0,1}^2}{8}\right)
    +\frac{1}{2} v_{\text{id},1}^2 v_{\text{id},6}
\end{align}

\subsubsection{$A' = (4,4,2)$ with $G = K_3$}
Write $\F_{\fAp,G}(\bt)$ in the coordinates $t_{1,1} = t_{(3,1),1}$, $t_{2,1} = t_{(3,2),1}$, $t_{3,1} = t_{(3,3),1}$, $t_{4,1} = t_{(3,4),1}$.

Due to $\Aut$--invariance axiom the functions $g_\alpha$ coincide for the quadruples $\alpha$ differing by a permutation.
By using mirror theorem \ref{theorem: msUnorbifolded} we have $\F_{A'} = \F_{\PP^1_{A'}}$. The latter potential can be found in \cite{BP18,B15}. In this section keep $x = x(t)$, $y = y(t)$ and $w = w(t)$ as in Section~\ref{section: elliptic E_7 G^D case}.


Restriction of $\F_{A'}$ to the fixed locus of the group action is given by $t_{1,k} = t_{2,k} = 0$ for $1 \le k \le 3$. Set also $t_{3,1} = t_2$.

The restriction to the $\id$--sector of $\F_{\fAp,G}$ is given by setting $t_{1,1} = \dots = t_{4,1}= t_2$.
\begin{align}
    \F_{A'}^G &= \frac{1}{96} t_2^4 \left( 2 x\left(t_6\right){}^2-y\left(t_6\right){}^2- 3 w\left(t_6\right)\right)+\frac{1}{4} t_1 t_2^2+\frac{1}{2} t_6 t_1^2,
    \\
    \F_{\fAp,G}^\id &= 4 t_2^4 g_{0,0,0,4}\left(t_6\right)+12 t_2^4 g_{0,1,0,3}\left(t_6\right)+12 t_2^4 g_{0,1,1,2}\left(t_6\right)
    \\
    &+6 t_2^4 g_{0,2,0,2}\left(t_6\right)+t_2^4 g_{1,1,1,1}\left(t_6\right)+t_1 t_2^2+\frac{1}{2} t_6 t_1^2
\end{align}
Equating $\F_{\fAp,G}^\id(t_1,t_2,t_6) = 4 \cdot \F_{A'}^G (t_1,t_2,t_6/4)$ we get
\begin{align*}
    g_{1,1,1,1}\left(t_6\right) &= -4 g_{0,0,0,4}\left(t_6\right)-12 g_{0,1,0,3}\left(t_6\right)-12 g_{0,1,1,2}\left(t_6\right)-6 g_{0,2,0,2}\left(t_6\right)
    \\
    &-\frac{1}{8} w\left(\frac{t_6}{4}\right)+\frac{1}{12} x\left(\frac{t_6}{4}\right){}^2-\frac{1}{24} y\left(\frac{t_6}{4}\right){}^2
\end{align*}
and we are left with $4$ unknown functions 
$g_{0,0,0,4}\left(t_6\right)$, $g_{0,1,0,3}\left(t_6\right)$, $g_{0,1,1,2}\left(t_6\right)$, $g_{0,2,0,2}\left(t_6\right)$.

WDVV equation can be solved explicilty giving two solutions:
\begin{align}
    & g_{0,0,0,4}\left(t_6\right) =  \frac{1}{384} \left(-3 w\left(\frac{t_6}{4}\right)+2 x\left(\frac{t_6}{4}\right)^2-4 y\left(\frac{t_6}{4}\right)^2\right), \\
    & g_{0,1,0,3}\left(t_6\right) =  0, \ g_{0,1,1,2}\left(t_6\right) =  0, \ g_{0,2,0,2}\left(t_6\right) =  -\frac{1}{64} w\left(\frac{t_6}{4}\right),
\end{align}
and
\begin{align}
    & g_{0,0,0,4}\left(t_6\right) =  \frac{1}{384} \left(-3 w\left(\frac{t_6}{4}\right)-x\left(\frac{t_6}{4}\right)^2-y\left(\frac{t_6}{4}\right)^2\right),
    \\
    & g_{0,1,0,3}\left(t_6\right) =  0, \ g_{0,1,1,2}\left(t_6\right) =  0, 
    \\ 
    &g_{0,2,0,2}\left(t_6\right) =  \frac{1}{64} \left(-w\left(\frac{t_6}{4}\right)+x\left(\frac{t_6}{4}\right)^2-y\left(\frac{t_6}{4}\right)^2\right).
\end{align}
It is important to note that all the functions above have Fourier series expansions in $q = \exp(t_6)$.

The first solution gives
\begin{align}
    \F_{\fAp,G} &= \frac{1}{2} t_6 t_1^2 + \frac{1}{4} t_1 \sum_{k=1}^4 t_{k,1}^2 
    \\
    &-\frac{1}{96} \left(t_{1,1}^4+t_{1,2}^4+t_{2,1}^4+t_{2,2}^4\right)  +  t_{1,1} t_{1,2} t_{2,1} t_{2,2} e^{t_6} + O(q^2).
\end{align}
It follows that the potential obtained satisfies all conditions of Theorem~\ref{theorem: yuuki--GW} and therefore coincides with the potential $\F_{\PP^1_A}$.

The second solution also gets the form as above after the linear change of the variables $t_{1,1} \to (t_{4,1}+t_{3,1})/\sqrt{2}$, $t_{1,2} \to (t_{4,1}-t_{3,1})\sqrt{2}$, $t_{2,1} \to (t_{1,1} + t_{2,1})\sqrt{2}$ and $t_{2,2} \to (t_{1,1}-t_{2,1})\sqrt{2}$.

The potential $\F_{(\widetilde E_7,K_3)}^\infty(\bv)$ can be now be written explicitly
\begin{align*}
    \F_{(\widetilde E_7,K_3)}^\infty(\bv) &= -\frac{w}{96} \left(3 v_{g,1} v_{g^2,1}+2 v_{g^0,1}^2\right)^2
    \\
    &+\frac{x^2}{288}  \left(12 \left(v_{g^2,1}^3+v_{g,1}^3\right) v_{g^0,1}+9 v_{g,1}^2 v_{g^2,1}^2+8 v_{g^0,1}^4\right)
    \\
    &+\frac{y^2}{144}  \left(-18 v_{g,1} v_{g^2,1} v_{g^0,1}^2-6 \left(v_{g^2,1}^3+v_{g,1}^3\right) v_{g^0,1}-9 v_{g,1}^2 v_{g^2,1}^2-2 v_{g^0,1}^4\right)
    \\
    &+v_{\text{id},1} \left(\frac{1}{2} v_{g,1} v_{g^2,1}+\frac{v_{g^0,1}^2}{3}\right) +\frac{1}{2} v_{\text{id},1}^2 v_{\text{id},6}.
\end{align*}

\subsubsection{$A' = (6,3,2)$ with $G = K_3$}
Write $\F_{\fAp,G}(\bt)$ in the coordinates $t_{1,1} = t_{(1,1),1}$, $t_{2,1} = t_{(1,2),1}$, $t_{3,1} = t_{(1,3),1}$, $t_{4,1} = t_{(3,1),1}$.

Due to the symmetry axiom we have $g_{\alpha_1,\alpha_2,\alpha_3,\alpha_4} = g_{\alpha_1,\alpha_3,\alpha_2,\alpha_4} = g_{\alpha_1,\alpha_4,\alpha_3,\alpha_2}$.
By using mirror theorem \ref{theorem: msUnorbifolded} we have $\F_{A'} = \F_{\PP^1_{A'}}$. The latter potential was published in \cite{B15} (see also \cite{SZ17} for the certain correlators).

In this section for $q = \exp(t_6)$ denote
\begin{align}
    & A_3(q) := \theta_2\left(q^2\right) \theta_2\left(q^6\right)+\theta_3\left(q^2\right) \theta_3\left(q^6\right),
    \\
    & x(t) = A_3(q^6), \ y(t) = A_3(q^{12}), \ r(t) = A_3(q^3) - A_3(q^{12}),
    \\
    & w(t) = E_2(q^6).
\end{align}
The function $A_3(q^3)$ has the following Fourier series expansion
\begin{align}
    A_3(q^3) &= 1+6 q^3+6 q^9+6 q^{12}+12 q^{21}+6 q^{27}+6 q^{36}+12 q^{39}+O\left(q^{41}\right).
\end{align}
In particular, the functions $x(t)$, $y(t)$, $r(t)$ and $w(t)$ have Fourier series expansion in the powers of $q$ divisible by $3$. We have
\begin{align}
    & x(t) = 1+6 q^6+6 q^{18}+6 q^{24}+O\left(q^{40}\right), \ y(t) = 1+6 q^{12}+6 q^{36}+O\left(q^{40}\right), 
    \\ 
    &r(t) = 6 q^3+6 q^9+12 q^{21}+6 q^{27}+12 q^{39}+O\left(q^{40}\right)
    \\
    & w(t) = 1-24 q^6-72 q^{12}-96 q^{18}-168 q^{24}-144 q^{30}-288 q^{36}+O\left(q^{40}\right).
\end{align}

Restriction of $\F_{A'}$ to the fixed locus of the group action is given by setting $t_{1,1} = t_{1,2} = t_{1,4} = t_{1,5} =0$, $t_{2,1} = t_{2,2} = 0$. Set also $t_{1,3} = t_2$ and $t_{3,1} = t_3$.

Restriction to the $\id$--sector of $\F_{\fAp,G}$ is given by setting $t_{1,1} = t_{2,1} = t_{3,1} = t_2$ and $t_{4,1} = t_3$.
\begin{align}
    \F_{A'}^G &= t_3^2 t_2^2 \left(-\frac{w}{96}+\frac{x^2}{96}+\frac{x y}{48}-\frac{y^2}{48}\right) + t_2^4 \left(-\frac{w}{576}-\frac{x^2}{576}-\frac{x y}{288}+\frac{y^2}{288}\right)
    \\
    &+t_3^3 t_2 \left(\frac{r x}{36}+\frac{r y}{36}\right)+t_3^4 \left(-\frac{w}{64}+\frac{x^2}{192}+\frac{x y}{96}-\frac{y^2}{96}\right)+\frac{1}{2} t_6 t_2^2+\left(\frac{t_1^2}{12}+\frac{t_3^2}{4}\right) t_1,
    \\
    \F_{\fAp,G}^\id &= t_2^4 g_{4,0,0,0}\left(t_6\right) +3 t_3 t_2^3 g_{3,0,0,1}\left(t_6\right) + 3 t_2^2 t_3^2 \left(g_{2,0,0,2}\left(t_6\right)+g_{2,0,1,1}\left(t_6\right)\right) 
    \\
    &+t_3^3 t_2 \left(3 g_{1,0,0,3}\left(t_6\right)+6 g_{1,0,1,2}\left(t_6\right)+g_{1,1,1,1}\left(t_6\right)\right)
    \\
    &+t_3^4 \left(3 g_{0,0,0,4}\left(t_6\right)+6 g_{0,0,1,3}\left(t_6\right)+3 g_{0,0,2,2}\left(t_6\right)+3 g_{0,1,1,2}\left(t_6\right)\right)
    \\
    & +\frac{1}{2} t_6 t_1^2 + \frac{1}{4}t_1 t_2^2 +\frac{3}{4} t_1 t_3^2.
\end{align}
Equating $\F_{\fAp,G}^\id(t_1,t_2,t_3,t_6) = 3 \cdot \F_{A'}^G (t_1,t_2,t_3,t_6/3)$ we get
\begin{align*}
    g_{0,1,1,2}\left(t_6\right) &=  -g_{0,0,0,4}\left(t_6\right)-2 g_{0,0,1,3}\left(t_6\right)-g_{0,0,2,2}\left(t_6\right)
    \\
    &-\frac{1}{64} w\left(\frac{t_6}{3}\right)+\frac{1}{96} x\left(\frac{t_6}{3}\right) y\left(\frac{t_6}{3}\right)+\frac{1}{192} x\left(\frac{t_6}{3}\right)^2 -\frac{1}{96} y\left(\frac{t_6}{3}\right)^2 ,
    \\
    g_{1,1,1,1}\left(t_6\right) &=  -3 g_{1,0,0,3}\left(t_6\right)-6 g_{1,0,1,2}\left(t_6\right)
    \\
    &+\frac{1}{12} r\left(\frac{t_6}{3}\right) x\left(\frac{t_6}{3}\right)+\frac{1}{12} r\left(\frac{t_6}{3}\right) y\left(\frac{t_6}{3}\right),
    \\
    g_{2,0,1,1}\left(t_6\right) &=  -g_{2,0,0,2}\left(t_6\right)-\frac{1}{96} w\left(\frac{t_6}{3}\right)+\frac{1}{48} x\left(\frac{t_6}{3}\right) y\left(\frac{t_6}{3}\right)
    \\
    &+\frac{1}{96} x\left(\frac{t_6}{3}\right)^2 -\frac{1}{48} y\left(\frac{t_6}{3}\right)^2 ,
    \\
    g_{4,0,0,0}\left(t_6\right) &=  -\frac{1}{192} w\left(\frac{t_6}{3}\right)-\frac{1}{96} x\left(\frac{t_6}{3}\right) y\left(\frac{t_6}{3}\right)
    -\frac{1}{192} x\left(\frac{t_6}{3}\right)^2 +\frac{1}{96} y\left(\frac{t_6}{3}\right)^2 ,
    \\
    g_{3,0,0,1}\left(t_6\right) &=  0.
\end{align*}
and we are left with $5$ unknown functions 
$g_{0,0,0,4}\left(t_6\right)$, $g_{0,0,1,3}\left(t_6\right)$, $g_{0,0,2,2}\left(t_6\right)$, $g_{1,0,0,3}\left(t_6\right)$, $g_{1,0,1,2}\left(t_6\right)$ and $g_{2,0,0,2}\left(t_6\right)$.

WDVV equation can be solved explicilty giving two solutions:
\begin{align}
    & g_{0,0,0,4}\left(t_6\right) =  -\frac{11}{2592}+O\left(q^2\right), \ g_{0,0,1,3}\left(t_6\right) =  -\frac{1}{324}+O\left(q^2\right),
    \\
    & g_{0,0,2,2}\left(t_6\right) =  -\frac{1}{54}+O\left(q^2\right), \ g_{1,0,0,3}\left(t_6\right) =  -\frac{4 }{27}q +O\left(q^2\right),
    \\
    & g_{1,0,1,2}\left(t_6\right) =  \frac{2}{9}q +O\left(q^2\right), \ g_{2,0,0,2}\left(t_6\right) =  O\left(q^2\right),
\end{align}
and
\begin{align}
    & g_{0,0,0,4}\left(t_6\right) =  -\frac{1}{96}+O\left(q^2\right), \ g_{0,0,1,3}\left(t_6\right) =  O\left(q^2\right),
    \\
    &g_{0,0,2,2}\left(t_6\right) =  O\left(q^2\right), \ g_{1,0,0,3}\left(t_6\right) =  O\left(q^2\right),
    \\
    &g_{1,0,1,2}\left(t_6\right) =  O\left(q^2\right), \ g_{2,0,0,2}\left(t_6\right) =  O\left(q^2\right).
\end{align}
where $q = \exp(t_6)$.

The first solution gives
\begin{align}
    \F_{\fAp,G} &= \frac{1}{2} t_6 t_1^2 + \frac{1}{4} t_1 \sum_{k=1}^4 t_{k,1}^2 
    \\
    &-\frac{1}{96} \left(t_{1,1}^4+t_{1,2}^4+t_{2,1}^4+t_{2,2}^4\right)  +  t_{1,1} t_{1,2} t_{2,1} t_{2,2} e^{t_6} + O(q^2).
\end{align}
It follows that the potential obtained satisfies all conditions of Theorem~\ref{theorem: yuuki--GW} and therefore coincides with the potential $\F_{\PP^1_A}$.

The second solution also gets the form as above after the linear change of the variables 
$t_{1,1} \mapsto t_{1,1}$, $t_{3,1} \mapsto t_{2,1}/3+ 2 t_{4,1}/3 - 2 t_{3,1}/3$, $t_{3,2} \mapsto -2t_{2,1}/3  + t_{3,1}/3 + 2 t_{4,1}/3$, $t_{3,3} \mapsto - 2 t_{2,1}/3 -  2 t_{3,1}/3 -  t_{4,1}/3$. This means that $M_{\widetilde E_8,K_3}^\infty$ is defined unquely up to isomorphism.

After solving WDVV equation we can write down the potential $\F_{\widetilde E_8,K_3}(\bv)$ explicilty. In the current case it will be written via many quasimodular functions above. However it looks to be easier to use the mirror theorem and the respective change of coordinates in order to get $\F_{\widetilde E_8,K_3}(\bv)$ from the potential $\F_{\PP^1_{2,2,2,2}}$.

\section{Relations in the ring of quasimodular forms}\label{section: modular forms relations}

In Section~\ref{section: examples} we have proved that Theorem~\ref{theorem: main} holds for the pairs $(\widetilde E_7, G^D)$ and $(\widetilde E_7,K_3)$. We have also  
found potentials $\F_{\widetilde E_7,G^D}^\infty(\bv)$ and $\F_{\widetilde E_7,K_3}^\infty(\bv)$ explicitly. However by Theorem~\ref{theorem: yuuki--GW} it follows that in $\bt$ coordinates these potentials coincide with Frobenius manifold potential of $\PP^1_{2,2,2,2}$.

The Frobenius manifold potential of $\PP^1_{2,2,2,2}$ was given explicitly in \cite{st:3}. Consider the functions
\begin{align}
    & f_0^{\text{\text{ST}}}(t) := - \frac{1}{48} \left( E_2(q) - E_2(-q) \right), \ f_1^{\text{\text{ST}}}(t) := -\frac{1}{24} E_2(q^4)
    \\
    & f_2^{\text{\text{ST}}}(t) := -\frac{1}{24} E_2(q) - f_0(t) - f_1(t).
\end{align}
Then we have by \cite[Theorem~2.1]{st:3}:
\begin{equation}\label{eq: PP^1_{2,2,2,2} GW potential}
\begin{aligned}
    \F_{\PP^1_{2,2,2,2}} &= \frac{1}{2} t_6 t_1^2 + \frac{1}{4} t_1 \sum_{k=1}^4 t_{k,1}^2 + \frac{1}{12} \left( \sum_{i,j=1}^4 t_{i,1}^2t_{j,1}^2 \right) f_2^{\text{\text{ST}}}(t_6)
    \\
    & + \frac{1}{4} \left(t_{1,1}^4+t_{1,2}^4+t_{2,1}^4+t_{2,2}^4\right) f_1^{\text{\text{ST}}}(t_6)  +  t_{1,1} t_{2,1} t_{3,1} t_{4,1} f_0^{\text{\text{ST}}}(t_6)
\end{aligned}
\end{equation}


Writing $\F_{\widetilde E_7,G^D}^\infty(\bt)$ in the $\bt$ coordinates we have
\begin{align}
    4 \cdot \F_{\widetilde E_7,G^D}^\infty(\bt) &= \frac{1}{2} t_6 t_1^2 + \frac{1}{4} t_1 \sum_{k=1}^4 t_{k,1}^2  
    +  \frac{1}{16} y\left( \frac{t_6}{4} \right)^2 t_{1,1} t_{2,1} t_{3,1} t_{4,1}
    \\
    &+ \frac{1}{2 \cdot 64} \left( -w \left( \frac{t_6}{4} \right) + x\left( \frac{t_6}{4} \right)^2 - y\left( \frac{t_6}{4} \right)^2\right)\sum_{i,j=1}^4 t_{i,1}^2t_{j,1}^2
    \\
    & + \frac{1}{384} \left(-3 w\left( \frac{t_6}{4} \right) - x\left( \frac{t_6}{4} \right)^2 - y\left( \frac{t_6}{4} \right)^2\right) \sum_{k=1}^4 t_{k,1}^4  .
\end{align}
Mirror theorem~\ref{theorem: main} gives that $4 \cdot \F_{\widetilde E_7,G^D}^\infty(\bt) = \F_{\PP^1_{2,2,2,2}}(\bt)$, and we get
\begin{align}
    & \frac{1}{4} f_1^{\text{\text{ST}}}(q^4) = \frac{1}{384} \left(-3 w\left(q\right) - x\left(q\right)^2 - y\left(q\right)^2\right), 
    \\ 
    & f_0^{\text{\text{ST}}}(q^4) = \frac{1}{16} y\left(q\right)^2, \ f_2^{\text{\text{ST}}}(q^4) = - \frac{1}{32} w \left(q\right).
\end{align}
for $x,y,w$ as in Section~\ref{section: elliptic E_7 G^D case}. This gives us a non--trivial relation between the quasimodular forms $E_2(q)$ and $\theta_k(q)$:
\begin{align}
    & E_2(q) - E_2(-q) = \frac{1}{8} \theta_2(q^4)^4,
    \\
    & E_2\left(q^2\right) = \frac{1}{2} E_2\left(q\right)+\frac{1}{2} \theta_2\left(q^2\right)^4+\frac{1}{2}\theta_3\left(q^2\right)^4.
\end{align}
We show this phenomenon in more details below.

\subsection{Combining mirror isomorphisms}\label{section: selfdual cases}
    Let $\tau_{eq}$ and $\tau_{cl}$ stand for the mirror isomorphisms of Theorem~\ref{theorem: main} and Theorem~\ref{theorem: msUnorbifolded} respectively. 
    Denote by $\phi_{utw}$ the embedding of identity sector axiom and $\pr_G$ the projection taking the $G$--invariants. Consider the following diagramm.
    \[ 
        \begin{tikzcd}
        M_{(\fAp,G)}^\infty \arrow{r}{\tau_{eq}} &  M_{\PP^1_A}  
        \\%
        \left(M_{(\fAp,\{\id\})}^\infty \right)^G \arrow[hook]{u}{\phi_{utw}} 
        \\
        M_{(\fAp,\{\id\})}^\infty \arrow{u}{\pr_G} & M_{\PP^1_{A'}} \arrow{l}{\tau_{cl}^{-1}} 
        \arrow[dashed]{uu}{\varphi}
        \end{tikzcd}
    \]
    It does not give us a map $\varphi: M_{\PP_{A'}} \to M_{\PP^1_{A}}$, because this diagramm only allows one to define $\varphi$ on the submanifold of $M_{\PP_{A'}}$. 
    We show on the examples $(\widetilde E_7, K_2)$ and $(\widetilde E_6, K_1)$ that, being considered on this submanifolds, $\varphi$ is non--trivial. 
    
    In particular, $\varphi$ gives us \textit{different} descriptions of the same Fourier series expansions of $M_{\PP^1_A}$ correlation functions. From the point of view of Landau--Ginzburg orbifold we have two different pairs $(\fAp,G)$ and $(\fAp,\{\id\})$. However the certain submanifolds in $M_{(\fAp,G)}^\infty$ and $M_{(\fAp,\{\id\})}^\infty$ turn out to be isomorphic to the same submanifold in $M_{\PP^1_A}$. Writing the composition of these isomorphisms explicilty in coordinates we get the certain identities in the ring of (quasi)modular forms.
    
    \subsection{The pair $A' = (4,4,2)$, $G = K_2$}
    
    To simplify the notation we consider ${\fAp = x^4 + y^4 + z^2 - c^{-1} xyz}$ and $K_2 = \langle g \rangle$ with $g(x,y,z) = (-x,y,-z)$.
    
    \subsubsection{Part 1: mirror map $\tau_{eq}$}
    Let $\F_{\fAp,G}$ be the Frobenius manifold potential of the pair $(\tilde E_7,G_y)$. It is the function of
    $$
    v_{\id,0},v_{\id,-1},v_{\id,x^2},v_{\id,y},v_{\id,y^2},v_{\id,y^3} \quad \text{ and }  v_{g,1},v_{g,2},v_{g,3}.
    $$    
    Let $\F_{\PP^1_A}$ be the Frobenius manifold potential of $\PP^1_A = \PP^1_{4,4,2}$. It is the functions of
    $$
    t_{0}, t_{-1}, \ t_{1,1},t_{1,2},t_{1,3} \ t_{2,1},t_{2,2},t_{2,3}, \ t_{3,1}.
    $$
    The isomorphism $\tau_{eq}: M^\infty_{\fAp,G} \cong M_{\PP^1_A}$ of Theorem~\ref{theorem: main} is given by the formulae:
    \begin{align*}
        & t_{1,k} = v_{\id,y^k} + v_{g,k}, \ t_{2,k} = v_{\id,y^k} - v_{g,k}, \quad 1 \le k \le 3
        \\
        & t_{3,1} = v_{\id,x^2},
        \\
        & t_1 = v_{\id,0} \quad t_{-1} = 2 v_{\id,-1}
    \end{align*}
    giving $2 \F_{\fAp,G}(v(\bt)) = \F_{\PP^1_A}$.
    
    The maps $\phi_{utw}$ is given by setting $v_{g,1}=v_{g,2}=v_{g,3}=0$ and $\left(M_{(\fAp,\{\id\})}^\infty \right)^G$ is a Frobenius submanifold with the potential $\F^{\id}_{\fAp,G}$ that up to the fifth order in $v_\bullet$ reads (here we use again the explicit formulae of $\PP^1_{4,4,2}$ Frobenius manifold potential as in Section~\ref{section: elliptic E_7 G^D case})
  \begin{align*}
    \F^{\id}_{\fAp,G} & = \frac{1}{2} v_{\id,-1} v_0^2 + v_0 \left(\frac{v_{x^2}^2}{8}+\frac{v_y v_{y^3}}{4} +\frac{v_{y^2}^2}{8}\right)
    + \frac{v_y^2 v_{y^2}}{8} \left(\theta_2(q^{16})^2+\theta_3(q^{16})^2\right)
    +\frac{v_{x^2} v_y^2}{8} \theta_2(q^8)^2 
    \\
    & -\left(\frac{E_2\left(q^{32}\right)}{12} -\frac{E_2\left(q^{16}\right)}{24} +\frac{E_2\left(q^8\right)}{48} \right) 
    \left(v_{x^2}^2 v_y v_{y^3} + \frac{v_{x^2}^2 v_{y^2}^2}{2} + \frac{v_{x^2}^4+v_{y^2}^4}{4} + v_y^2 v_{y^3}^2 + v_y v_{y^2}^2 v_{y^3}\right)
    \\
    &- \theta_2(q^{16})^4 \left(\frac{v_{x^2}^4+v_{y^2}^4}{192} +\frac{v_y^2 v_{y^3}^2}{64} +\frac{v_y v_{y^3} v_{y^2}^2}{32}\right)
    + \theta_2(q^{16})^2 \theta_3(q^{16})^2 \left(\frac{v_{x^2}^2 v_{y^2}^2}{16} + \frac{v_y^2 v_{y^3}^2}{32} + \frac{v_y v_{y^3} v_{y^2}^2}{16} \right)
    \\
    &+ \theta_3(q^{16})^4 \left(\frac{v_{x^2}^2 v_y v_{y^3}}{16}  + \frac{v_{x^2}^2 v_{y^2}^2}{32} + \frac{v_{x^2}^4+v_{y^2}^4}{96} + 3 \frac{v_y^2 v_{y^3}^2}{64}+\frac{v_y v_{y^2}^2 v_{y^3}}{32} \right)
    \\
    &+ \theta_2(q^8)^2 \left(\theta_2(q^{16})^2+\theta_3(q^{16})^2\right) \frac{v_{x^2} v_y v_{y^2} v_{y^3}}{16} + \text{higher order terms}.
  \end{align*}
  
  \subsubsection{Part 2: mirror map $\tau_{cl}$}
  We have $A' = A$. The mirror map $\tau_{cl}$ of Theorem~\ref{theorem: msUnorbifolded} is given by
  \begin{align}
    & t_{1,k} = v_{x^k}, \ t_{2,k} = v_{y^k}, \quad 1 \le k \le 3,
    \\
    & t_{3,1} = v_z,
    \\
    & t_1 = v_0, \ t_{-1} = v_{-1},
  \end{align}
  giving $\F_{\fAp}(v(\bt)) = \F_{\PP^1_{A'}}$.

  The map $\pr_G$ is given by setting $v_{x} = v_{x^3} = v_{z} = 0$ and $\left(M_{\fAp}^\infty \right)^G$ is a Frobenius submanifold with the potential $\F^{G}_{\fAp}$ that up to the fifth order in $v_\bullet$ reads
  \begin{align*}
    \F^{G}_{\fAp} & = \frac{1}{2} v_{\id,-1} v_0^2 + v_0 \left(\frac{v_{x^2}^2}{8}+\frac{v_y v_{y^3}}{4} +\frac{v_{y^2}^2}{8}\right)
    + \frac{v_{x^2} v_y^2}{8} \theta_2(q^8)^2 
    +\frac{v_y^2 v_{y^2}}{8} \theta_3(q^8)^2 
    \\
    &
    - \left(\frac{E_2(q^{16})}{24} -\frac{E_2(q^8)}{48} +\frac{E_2(q^4)}{96} \right) \left( v_{x^2}^2 v_y v_{y^3} + \frac{v_{x^2}^2 v_{y^2}^2}{2}  + \frac{v_{x^2}^4+v_{y^2}^4}{4} + v_y^2 v_{y^3}^2 + v_y v_{y^2}^2 v_{y^3} \right)
    \\
    &
    - \theta_2(q^8)^4 \left( \frac{v_{x^2}^4+v_{y^2}^4}{192} + \frac{v_y^2 v_{y^3}^2}{64} + \frac{v_{x^2}^2 v_y v_{y^3}}{32}\right)
    + \theta_3(q^8)^2 \theta_2(q^8)^2 \frac{v_{x^2} v_y v_{y^2} v_{y^3}}{16}
    \\
    &
    + \theta_3(q^8)^4 \left( \frac{v_{x^2}^2 v_y v_{y^3}}{32} + \frac{v_{x^2}^2 v_{y^2}^2}{64} + \frac{v_{x^2}^4+v_{y^2}^4}{384} + \frac{v_y^2 v_{y^3}^2}{64} \right)
    + \text{higher order terms}.
  \end{align*}
  \subsubsection{Comparison}
  Due to invariant sector axiom of $M^\infty_{(\fAp,G)}$ we should have $\F^G_{\fAp} \equiv \F^{\id}_{\fAp,G}$ viewed as the formal power series. Due to the quasihomogeneity property we have, this is equivalent to comparing the coefficients in $v_\bullet$ of these potentials, viewed as Frourier series in $q$. 
  This amounts to the following equations.
  \begin{align}
    & v_y^2 v_{y^2}: \quad \theta_2(q^{16})^2 + \theta_3(q^{16})^2 = \theta_3 (q^8)^2,
    \\
    & v_{x^2}^3 v_{y^3}^2: \quad \theta_2 (q^8)^5 = 4 \theta_2 (q^8) \theta_2 (q^{16})^2 \theta_3 (q^{16})^2.
  \end{align}
  These two equalities are known as double argument formulae for theta constants. Note that these equalities follow essentially from our Mirror theorem. Checking the other monomials (26 in total) we don't get any other independent equalities in this case. 
  
    \subsection{The pair $\widetilde E_6$, $G = K_1$}

    To simplify the notation we consider $f = x^3 + y^3 + z^3 - c^{-1} xyz$ and $K_1 = \langle g \rangle$.

    \subsubsection{Part 1: mirror map $\tau_{eq}$}
    Let $\F_{\fAp,G}$ be the Frobenius manifold potential of the pair $(\tilde E_6,K_1)$. It is the function of
    $$
        v_{\id,0},v_{\id,-1},v_{\id,x},v_{\id,x^2} \quad \text{ and }  v_{g,1},v_{g,2},v_{g^2,1},v_{g^2,2}.
    $$    
    Let $\F_{\PP^1_A}$ be the Frobenius manifold potential of $\PP^1_A = \PP^1_{3,3,3}$. This potential was written explicitly in \cite{st:3}. It is the functions of
    $$
    t_{0}, t_{-1}, \ t_{1,1},t_{1,2} \ t_{2,1},t_{2,2}, \ t_{3,1},t_{3,2}.
    $$
    The isomorphism $\tau_{eq}: M^\infty_{\fAp,G} \cong M_{\PP^1_A}$ of Theorem~\ref{theorem: main} is given by the formulae:
    \begin{align*}
        & t_{p,k} = v_{\id,x^k} + \epi \left[\frac{p(k-1)}{3}\right] v_{g,k} + \epi \left[\frac{p(1-k)}{3} \right] v_{g^2,k},  \quad 1 \le k \le 2, \ 1 \le p \le 3,
        \\
        & t_1 = v_{\id,0} \quad t_{-1} = 3 v_{\id,-1}
    \end{align*}
    giving $3 \F_{\fAp,G}(v(\bt)) = \F_{\PP^1_A}$.

    The maps $\phi_{utw}$ is given by setting $v_{g,1}=v_{g,2}=v_{g^2,1}=v_{g^2,2}=0$ and $\left(M_{(\fAp,\{\id\})}^\infty \right)^G$ is a Frobenius submanifold with the potential $\F^{\id}_{f,G}$ that up to the fifth order in $v_\bullet$ reads 
  \begin{align*}
    \F^{\id}_{f,G} & = \frac{1}{2} v_0^2 v_{-1} + \frac{1}{3} v_0 v_{\id,z} v_{\id,z^2} + \Bigg( \frac{1}{18} \theta_2\left(q^{18}\right) \theta_2\left(q^{54}\right)  
    + \frac{1}{18} \theta_3\left(q^{18}\right) \theta_3\left(q^{54}\right)  
    + \frac{\eta \left(q^{27}\right)^3}{3 \eta \left(q^9\right)} \Bigg)v_{\id,x^2}^3 
    \\
    &+ \Bigg(-\frac{1}{16} E_2\left(q^{27}\right)  - \frac{1}{48} E_2\left(q^9\right)  
    + \frac{1}{18} \theta_2\left(q^{18}\right){}^2 \theta_2\left(q^{54}\right){}^2  
    + \frac{1}{18} \theta_3\left(q^{18}\right){}^2 \theta_3\left(q^{54}\right){}^2 
     \\
     &
    +\frac{1}{9} \theta_2\left(q^{18}\right) \theta_2\left(q^{54}\right) \theta_3\left(q^{18}\right) \theta_3\left(q^{54}\right) +\frac{\eta \left(q^{27}\right)^6 }{2 \eta \left(q^9\right)^2} 
    \\
    &+\frac{\eta \left(q^{27}\right)^3 \theta_2\left(q^{18}\right) \theta_2\left(q^{54}\right)}{6 \eta \left(q^9\right)}  +\frac{\eta \left(q^{27}\right)^3 \theta_3\left(q^{18}\right) \theta_3\left(q^{54}\right)}{6 \eta \left(q^9\right)}  \Bigg) v_{\id,x}^2 v_{\id,x^2}^2
    \\
    &+ \text{higher order terms}.
  \end{align*}
  
  \subsubsection{Part 2: mirror map $\tau_{cl}$}
  We have $A' = A$. The mirror map $\tau_{cl}$ of Theorem~\ref{theorem: msUnorbifolded} is given by
  \begin{align}
    & t_{1,k} = v_{x^k}, \ t_{2,k} = v_{y^k}, \ t_{3,k} = v_{z^k} \quad 1 \le k \le 3,
    \\
    & t_1 = v_0, \ t_{-1} = v_{-1},
  \end{align}
  giving $\F_{\fAp}(v(\bt)) = \F_{\PP^1_{A'}}$.

  The map $\pr_G$ is given by setting $v_{y} = v_{y^2} = v_{z} = v_{z^2} = 0$ and $\left(M_{\fAp}^\infty \right)^G$ is a Frobenius submanifold with the potential $\F^{G}_{\fAp}$ that up to the fifth order in $v_\bullet$ reads
  \begin{align*}
    \F^{G}_{\fAp} & = \frac{1}{2} v_0^2 v_{-1} + \frac{1}{3} v_0 v_{x} v_{x^2} + \left( -\frac{1}{48} E_2\left(q^9\right) -\frac{1}{144} E_2\left(q^3\right) \right)v_x^2 v_{x^2}^2
    \\
    & \left( \frac{1}{18} \theta_2\left(q^6\right) \theta_2\left(q^{18}\right) +\frac{1}{18} \theta_3\left(q^6\right) \theta_3\left(q^{18}\right)  \right)v_{x^2}^3
    \\
    &+ \text{higher order terms}.
  \end{align*}
  \subsubsection{Comparison}
  Proceeding as in section above we get after some simplification the following relations
  \begin{align}
    & \frac{1}{2} \left(E_2(q)-9 E_2\left(q^9\right)\right) + 3 \left(\theta _2\left(q^6\right) \theta _2\left(q^{18}\right)+\theta _3\left(q^6\right) \theta _3\left(q^{18}\right)\right)^2
    \\
    & +\theta _2\left(q^2\right){}^2 \theta _2\left(q^6\right){}^2+2 \theta _2\left(q^2\right) \theta _3\left(q^2\right) \theta _3\left(q^6\right) \theta _2\left(q^6\right)+\theta _3\left(q^2\right){}^2 \theta _3\left(q^6\right){}^2=0
    \\
    &-\frac{6 \eta \left(q^9\right)^3}{\eta \left(q^3\right)}+\theta _2\left(q^6\right) \left(\theta _2\left(q^2\right)-\theta _2\left(q^{18}\right)\right)+\theta _3\left(q^6\right) \left(\theta _3\left(q^2\right)-\theta _3\left(q^{18}\right)\right)=0
  \end{align}
  These two identities can be easily checked with the computer to any order in $q$. They might be known to the experts bu we didn't find them in any textbook.
  Checking the other monomials we do not get any other independent identities in this case.


\end{document}